\def\eqnarray{\stepcounter{equation}\let\@currentlabel=\theequation
\global\@eqnswtrue
\tabskip\@centering\let\\=\@eqncr
$$\halign to \displaywidth\bgroup\hfil\global\@eqcnt\z@
  $\displaystyle\tabskip\z@{##}$&\global\@eqcnt\@ne
  \hfil$\displaystyle{{}##{}}$\hfil
  &\global\@eqcnt\tw@ $\displaystyle{##}$\hfil
  \tabskip\@centering&\llap{##}\tabskip\z@\cr}
\def\endeqnarray{\@@eqncr\egroup
      \global\advance\c@equation\m@ne$$\global\@ignoretrue}
\def\@yeqncr{\@ifnextchar [{\@xeqncr}{\@xeqncr[5pt]}}
      \def\dC{{\mathbb C}}
   \def\dN{{\mathbb N}}   
      \def\dR{{\mathbb R}}
      \def\cC{{\mathcal C}}
\def\cD{{\mathcal D}}      
\def\cG{{\mathcal G}}   \def\cH{{\mathcal H}}   
   \def\cK{{\mathcal K}}   \def\cL{{\mathcal L}}
   \def\cN{{\mathcal N}}
\def\cal H{{\mathcal H}}
\def\ran{{\text{\rm ran\,}}}
\def\dom{{\text{\rm dom\,}}}
\def\min{{\text{\rm min}}}
\def\essinf{{\text{\rm essinf}}}
\def\mul{{\text{\rm mul\,}}}
\def\ker{{\text{\rm ker\,}}}
\def\phi{\varphi}
\newtheorem{theorem}{Theorem}[section]
\newtheorem*{thm*}{Theorem}
\newtheorem{proposition}[theorem]{Proposition}
\newtheorem{corollary}[theorem]{Corollary}
\newtheorem{lemma}[theorem]{Lemma}
\theoremstyle{definition}
\newtheorem{remark}[theorem]{Remark}
\newtheorem{example}[theorem]{Example}
\numberwithin{equation}{section}
\title[]{Dirichlet-to-Neumann maps on bounded Lipschitz domains}
\author{J. Behrndt}
\address{Institut f\"ur Numerische Mathematik \\
Technische Universit\"at Graz \\
Steyrergasse 30\\
A-8010 Graz\\
Austria}
\email{behrndt@tugraz.at}
\author{A.F.M. ter Elst}
\address{Department of Mathematics  \\
University of Auckland  \\
Private Bag 92019  \\
Auckland 1142  \\
New Zealand}
\email{terelst@math.auckland.ac.nz}
\begin{document}
\bibliographystyle{tom}

\begin{abstract}
The Dirichlet-to-Neumann map associated to an elliptic partial differential equation becomes 
multivalued when the underlying Dirichlet problem is not uniquely solvable.
The main objective of this paper 
is to present a systematic study of the Dirichlet-to-Neumann map and its inverse, the Neumann-to-Dirichlet map, in the framework of linear relations in Hilbert spaces. 
Our treatment is 
inspired by abstract methods from extension theory of symmetric operators, utilizes the general theory of linear relations
and makes use of some deep results on the regularity of the solutions of boundary value problems on bounded Lipschitz domains.
\end{abstract}

\begingroup
\makeatletter
\patchcmd{\@settitle}{\uppercasenonmath\@title}{\large}{}{}
\patchcmd{\@setauthors}{\MakeUppercase}{}{}{}
\makeatother
\maketitle
\endgroup

\section{Introduction}
The Dirichlet-to-Neumann operator is a central object in the analysis of elliptic partial differential equations; it plays a
fundamental role in the classical Calder\'on problem \cite{Cal,Nach1,Nach2,NSU,SU}, is intimately connected with the spectral properties of the associated
partial differential operators, and has a attracted a lot of interest in the recent past, see, e.g. 
\cite{AlpayB,AmrP,AE3,AEKS,ArM2,BehL1,BehL2,BehR,BehR2,BGW,GesM2,GesM3,GesMZ,Mal,Marl} for a small selection of papers of analytic nature.  
In the following let $\Omega$ be a bounded Lipschitz domain in $\dR^n$, where $n\geq 2$, with boundary $\cC$ and consider the differential expression $\cL=-\Delta+V$
on $\Omega$ with $V\in L^\infty(\Omega)$ real valued. 
Under these assumptions it is well known (see for example \cite{McL}, Theorem 4.10)
that for all $\lambda\in\dC$ the Dirichlet problem
\begin{equation}\label{bvp}
\cL f=\lambda f \qquad\text{and}\qquad f\vert_\cC=\varphi
\end{equation}
is solvable for those $\varphi\in H^{1/2}(\cC)$ which satisfy $(\varphi,\partial_\nu h\vert_\cC)_{H^{1/2}(\cC)\times H^{-1/2}(\cC)}=0$
for all solutions $h\in H^1(\Omega)$ of the corresponding homogeneous problem
\begin{equation}\label{bvp2}
\cL h=\lambda h \qquad\text{and}\qquad h\vert_\cC=0.
\end{equation}
Here $\partial_\nu h\vert_\cC\in H^{-1/2}(\cC)$ stands for the normal derivative of $h$ at the boundary $\cC$ of 
$\Omega$ with normal vector pointing outwards and $\cL$ acts as a distribution operator.
In particular, if \eqref{bvp2} has only the trivial solution then for every $\varphi\in H^{1/2}(\cC)$
there exists a unique solution $f_\lambda\in H^1(\Omega)$ of \eqref{bvp}. 
It follows that 
the subspace 
\begin{equation}\label{cauchydata}
 \cD(\lambda):=\bigl\{\{f_\lambda\vert_\cC,\partial_\nu f_\lambda\vert_\cC \}\in H^{1/2}(\cC)\times H^{-1/2}(\cC):
 f_\lambda\in H^1(\Omega) \mbox{ and } \cL f_\lambda=\lambda f_\lambda \bigr\}
\end{equation}
in $H^{1/2}(\cC)\times H^{-1/2}(\cC)$
consisting of the Cauchy data of solutions of \eqref{bvp} can be viewed as the graph of an operator defined on $H^{1/2}(\cC)$
whenever  $\lambda\in\dC$ is such that \eqref{bvp2} has only the trivial solution; this is 
the case if and only if $\lambda$ is not an eigenvalue of the selfadjoint Dirichlet realization 
$$A_D f=\cL f,\qquad \dom A_D=\bigl\{f\in H^1_0(\Omega):-\Delta f+Vf\in L^2(\Omega)\bigr\},$$
in $L^2(\Omega)$.
In other words, for all $\lambda\not\in\sigma_p(A_D)$ the Dirichlet-to-Neumann map
\begin{equation*}
 \cD(\lambda):H^{1/2}(\cC)\rightarrow H^{-1/2}(\cC),\qquad f_\lambda\vert_\cC\mapsto \partial_\nu f_\lambda\vert_\cC,
\end{equation*}
is a well-defined operator on $H^{1/2}(\cC)$. However, the set of Cauchy data is given also for $\lambda\in\sigma_p(A_D)$; 
in this case \eqref{bvp} is solvable only on a subspace of $H^{1/2}(\cC)$ of finite codimension and the solution is not unique. 
It is then natural to view $\cD(\lambda)$ in \eqref{cauchydata} as the graph of a `multivalued operator' defined on a subspace of $H^{1/2}(\cC)$
`mapping' into  $H^{-1/2}(\cC)$. This point of view was also taken in the recent publications \cite{AEKS} and \cite{ArM2}, where the restriction
of $\cD(\lambda)$ to $L^2(\cC)$ was shown to be a selfadjoint linear relation in $L^2(\cC)$ which is semibounded from below. 
In \cite{ArM2} an argument relying on a Galerkin approximation method
was employed, and in \cite{AEKS} general form methods based on a Fredholm alternative and compact embeddings were used. Both approaches are of more 
extrinsic nature and do not allow a detailed study of spectral and mapping properties of the selfadjoint linear relation and its operator part.

The main objective of the present paper is to present a systematic and more intrinsic study of Dirichlet-to-Neumann maps  in the framework
of linear relations in Hilbert spaces. The calculus of linear relations is a very useful and convenient tool even when studying operators,
e.g. the inverse of a selfadjoint operator is always a selfadjoint linear relation, and hence admits a spectral function and a
functional calculus similar to the ones of selfadjoint operators. We briefly review some elements in the theory of symmetric and selfadjoint 
linear relations in the appendix.  In Sections~\ref{sec2} and \ref{sec3} we first recall some basic facts on Sobolev 
spaces on Lipschitz domains, trace operators, and the selfadjoint Dirichlet operator $A_D$ and Neumann operator $A_N$ associated with the differential expression 
$\cL=-\Delta+V$ in $L^2(\Omega)$. 
Section~\ref{sec4} is devoted to the Dirichlet-to-Neumann map $\cD(\lambda)$ and its inverse, the Neumann-to-Dirichlet map $\cN(\lambda)$,
viewed as linear relations in $H^{1/2}(\cC)\times H^{-1/2}(\cC)$ and $H^{-1/2}(\cC)\times H^{1/2}(\cC)$, respectively. 
After discussing some elementary properties of their domains, multivalued parts, kernels, and ranges, we establish a connection between
$\cD(\lambda)$ and $\cD(\mu)$ (and, similarly for $\cN(\lambda)$ and $\cN(\mu)$) for all $\lambda,\mu\in\dC$ in Theorem~\ref{dnthm} and 
Corollary~\ref{dncor}, and we prove a variant of a Krein type formula for the resolvent difference of $A_D$ and $A_N$ in Theorem~\ref{resdiffthm}.
Such formulae are known under the additional assumption $\lambda,\mu\not\in(\sigma_p(A_D)\cup\sigma_p(A_N))$, and it is remarkable that
they remain true for all $\lambda,\mu\in\dC$ when interpreted in the sense of linear relations.
The origin of these correspondences is in abstract extension theory of symmetric operators in Hilbert spaces, where, roughly speaking, the functions 
$\lambda\mapsto \cD(\lambda)$ and $\lambda\mapsto\cN(\lambda)$ can be viewed as so-called $Q$-functions or Weyl functions; 
cf. \cite{BehL1,BehL2,DHMS,DerkachM,LanT}. We wish to emphasize that the considerations and results in Section~\ref{sec4} 
are mainly based on Green's identity and elementary computations of mostly algebraic nature, and that no deeper results on elliptic regularity
or compactness properties of the involved trace mappings and embeddings are employed. This changes dramatically in Section~\ref{sec5}, where the restrictions 
$$D(\lambda)=\cD(\lambda)\cap \bigl(L^2(\cC)\times L^2(\cC)\bigr)\qquad\text{and}\qquad N(\lambda)=\cN(\lambda)\cap \bigl(L^2(\cC)\times L^2(\cC)\bigr)$$
are considered as linear relations in $L^2(\cC) \times L^2(\cC)$. 
An essential ingredient in our further analysis are results due to Jerison and Kenig \cite{JK2,JK},
and Gesztesy and Mitrea \cite{GesM2,GesM3} on the $H^{3/2}(\Omega)$-regularity of the functions in $\dom A_D$ and $\dom A_N$, and the solvability of \eqref{bvp} in 
$H^{3/2}(\Omega)$ for boundary data $\varphi\in H^1(\cC)$. 
We specify the domains of $D(\lambda)$ and $N(\lambda)$ in Theorem~\ref{thmdn},
and observe the interesting fact that their kernels and multivalued parts coincide with those of $\cD(\lambda)$ and $\cN(\lambda)$. 
The main results in Section~\ref{sec5} are Theorems~\ref{dnmapsa} and \ref{dnmapsa2}, 
where it is shown that if $\lambda \in \dR$ then $D(\lambda)$ and $N(\lambda)$
are selfadjoint relations in $L^2(\cC)$ with finitely many negative eigenvalues, the operator part of $N(\lambda)$ is a compact selfadjoint operator,
and the operator part of $D(\lambda)$ is an unbounded selfadjoint operator with discrete spectrum. These theorems can be viewed as extensions and refinements of 
some results in \cite{AEKS,ArM2}.
However, the strategy for the proofs in Section~\ref{sec5} is very much different from the methods used in \cite{AEKS,ArM2}. Here we rely on an explicit connection
of the Neumann-to-Dirichlet map $N(\lambda)$ with $(A_N-\lambda)^{-1}$, where the latter is a selfadjoint relation with multivalued part $\ker(A_N-\lambda)$
and compact operator part with finitely many negative eigenvalues. We then 
deduce spectral and mapping properties of $N(\lambda)$ from those of $(A_N-\lambda)^{-1}$ via perturbation arguments and an abstract result on the
selfadjointness of a product of bounded operators and a selfadjoint relation in Proposition~\ref{ttprop}. After establishing the selfadjointness
and the spectral properties of the Neumann-to-Dirichlet map the corresponding facts for the Dirichlet-to-Neumann map 
follow immediately from $D(\lambda)=N(\lambda)^{-1}$. We finally mention that such type of results were successfully applied in the special 
case of the Laplacian in \cite{ArM2} to prove strict inequalities between Dirichlet and Neumann eigenvalues.

\vspace{5mm}

\noindent
{\bf Acknowledgements}. J. Behrndt 
is most grateful for the stimulating research stay and the hospitality at the University of Auckland in January 2014 
where parts of this paper were written.
This work is supported by the Austrian 
Science Fund (FWF), project
P 25162-N26 and 
part of this work is supported by the Marsden Fund Council from Government funding, 
administered by the Royal Society of New Zealand.

\section{Lipschitz domains, Sobolev spaces and trace operators}\label{sec2}

Let $\Omega\subset\dR^n$, where $n\geq 2$, be a bounded Lipschitz domain with boundary $\cC$. 
By $H^s(\Omega)$ and $H^s(\cC)$ we denote the Sobolev spaces of order $s \geq 0$ on $\Omega$ and 
$\cC$, respectively, and by $H_0^s(\Omega)$ the closure of the set of $C^\infty$-functions with compact support in $\Omega$ 
with respect to the $H^s$-norm.
Further, $H^{-s} (\cC)$ denotes the dual space of $H^{s} (\cC)$; the corresponding extension of the $L^2(\cC)$ inner product
onto $H^{s} (\cC)\times H^{-s} (\cC)$ is denoted by $(\cdot,\cdot)_{H^{s} (\cC)\times H^{-s} (\cC)}$.
We write $u |_\cC\in H^{1/2}(\cC)$ for the trace of $u \in H^1(\Omega)$ at 
the boundary $\cC$ and if $\Delta u \in L^2(\Omega)$ then we set
$\partial_\nu u |_\cC\in H^{-{1/2}}(\cC)$ for the Neumann trace of $u$ 
at $\cC$, see, e.g. \cite{McL}, Theorem~3.37 and Lemma~4.3.
Recall that $\partial_\nu u\vert_\cC$ is the unique function in $H^{-{1/2}}(\cC)$ which satisfies
\begin{equation}\label{ederi}
(\partial_\nu u|_\cC,v|_\cC)_{H^{-{1/2}}(\cC) \times H^{{1/2}}(\cC)}
=(\Delta u,v)_{H^{-1}(\Omega) \times H^1(\Omega)} +(\nabla u,\nabla v)_{L^2(\Omega)^n}
\end{equation}
for all $v\in H^1(\Omega)$,
see \cite{McL}, Lemma~4.3.
Note also that $H^1_0(\Omega)$ coincides with the kernel of the trace operator $u\mapsto u\vert_\cC$ on $H^1(\Omega)$.

\section{Schr\"{o}dinger operators with Dirichlet and Neumann boundary conditions}\label{sec3}

Let $V\in L^\infty(\Omega)$ be a real valued function and consider the differential expression
\[
 \cL:=-\Delta + V.
\]
The first Green's identity states that 
\begin{equation*}
(\cL u,v)_{L^2(\Omega)}
=(\nabla u,\nabla v)_{L^2(\Omega)^n}+(Vu,v)_{L^2(\Omega)}
   -(\partial_\nu u\vert_\cC,v\vert_\cC)_{H^{-1/2}(\cC) \times H^{1/2}(\cC)}
\end{equation*}
for all $u,v\in H^1(\Omega)$ such that $\cL u,\cL v\in L^2(\Omega)$.
The selfadjoint operators $A_D$ and $A_N$ in $L^2(\Omega)$ with Dirichlet and Neumann boundary conditions
are defined as the representing operators of the closed symmetric lowerbounded sesquilinear forms
\[
\begin{split}
\mathfrak a_D[u,v]&=(\nabla u,\nabla v)_{L^2(\Omega)^n}+(Vu,v)_{L^2(\Omega)},\qquad u,v\in H^1_0(\Omega),\\
\mathfrak a_N[u,v]&=(\nabla u,\nabla v)_{L^2(\Omega)^n}+(Vu,v)_{L^2(\Omega)},\qquad u,v\in H^1(\Omega).
\end{split}
\]
It follows that the selfadjoint operators $A_D$ and $A_N$ are given by
\begin{equation*}
\begin{split}
A_D=-\Delta+V,&\qquad\dom A_D=\bigl\{u\in H^1_0(\Omega):\cL u\in L^2(\Omega)\bigr\},\\
A_N=-\Delta+V,&\qquad\dom A_N=\bigl\{u\in H^1(\Omega):\partial_\nu u\vert_\cC=0 \mbox{ and } \cL u\in L^2(\Omega)\bigr\}.
\end{split}
\end{equation*}
Moreover, it follows from the lower boundedness of the forms $\mathfrak a_D$ and $\mathfrak a_N$ that 
the operators $A_D$ and $A_N$ are lower bounded with lower bound $\essinf\,V$.

\section{Dirichlet-to-Neumann and Neumann-to-Dirichlet maps}\label{sec4}

The Dirichlet-to-Neumann map $\cD(\lambda)$ and the Neumann-to-Dirichlet map $\cN(\lambda)$ associated to the differential expression 
$\cL-\lambda$ are defined for
all $\lambda\in\dC$ as subspaces of $H^{1/2}(\cC)\times H^{-1/2}(\cC)$ and
$H^{-1/2}(\cC)\times H^{1/2}(\cC)$, respectively, by 
\[
\begin{split}
\cD(\lambda):=\bigl\{\{f_\lambda\vert_\cC,\partial_\nu f_\lambda\vert_\cC\} \in H^{1/2}(\cC)\times H^{-1/2}(\cC):
 f_\lambda\in H^1(\Omega) \mbox{ and } \cL f_\lambda=\lambda f_\lambda \bigr\},\\
\cN(\lambda):=\bigl\{\{\partial_\nu f_\lambda\vert_\cC,f_\lambda\vert_\cC\} \in H^{-1/2}(\cC)\times H^{1/2}(\cC):
 f_\lambda\in H^1(\Omega) \mbox{ and } \cL f_\lambda=\lambda f_\lambda \bigr\}.
\end{split}
\]
See Appendix~\ref{Sappendix} for a short introduction into the theory of linear relations.
Clearly, 
$$\cD(\lambda)^{-1}=\cN(\lambda)\qquad\text{and}\qquad\cD(\lambda)=\cN(\lambda)^{-1}$$ 
in the sense of linear relations, and
\begin{equation}\label{emulker}
\begin{split}
 \ker \cD(\lambda)&=\mul\cN(\lambda)=\bigl\{f_\lambda\vert_\cC:f_\lambda\in \ker (A_N-\lambda) \bigr\}\subset H^{1/2}(\cC),\\
 \ker \cN(\lambda)&=\mul\cD(\lambda)=\bigl\{\partial_\nu f_\lambda\vert_\cC:f_\lambda\in \ker (A_D-\lambda) \bigr\}\subset H^{-1/2}(\cC).
\end{split}
\end{equation}
It will be shown later in Theorem~\ref{thmdn} that in fact $\ker \cD(\lambda)\subset H^1(\cC)$ and $\ker \cN(\lambda)\subset L^2(\cC)$.
We next characterise the domains of the Dirichlet-to-Neumann map $\cD(\lambda)$ 
and the Neumann-to-Dirichlet map $\cN(\lambda)$.

\begin{proposition}\label{domdomprop}
For all $\lambda\in\dC$ the domains of $\cD(\lambda)$ and $\cN(\lambda)$ are
 \begin{equation}\label{edomgammad}
 \begin{split}
 \dom \cD(\lambda)&= \bigl\{\varphi\in H^{1/2}(\cC) : 
   (\varphi,\partial_\nu f_\lambda\vert_\cC)_{H^{1/2}(\cC) \times H^{-1/2}(\cC)}=0 \mbox{ for all }
 f_\lambda\in\ker(A_D-\lambda) \bigr\},\\
 \dom \cN(\lambda)&= \bigl\{\psi\in H^{-1/2}(\cC) : 
   (\psi,f_\lambda\vert_\cC)_{H^{-1/2}(\cC) \times H^{1/2}(\cC)}=0 \mbox{ for all }
 f_\lambda\in\ker(A_N-\lambda) \bigr\},
 \end{split}
\end{equation}
and, in particular, 
\begin{equation}\label{showthis}
 \dom \cD(\lambda)=H^{1/2}(\cC)  \mbox{ if and only if } \lambda\in\rho(A_D)
\end{equation}
and
\begin{equation}\label{dontshowthis}
 \dom \cN(\lambda)=H^{-1/2}(\cC)  \mbox{ if and only if } \lambda\in\rho(A_N).
\end{equation}
\end{proposition}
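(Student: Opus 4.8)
The plan is to prove the two identities in \eqref{edomgammad} by a double inclusion and then to deduce \eqref{showthis} and \eqref{dontshowthis} from them; I describe the argument for $\cD(\lambda)$, the one for $\cN(\lambda)$ being obtained by interchanging the roles of the Dirichlet and Neumann traces throughout. The workhorse is the second Green identity, obtained by subtracting the first Green identity written for the (complex conjugated) pair $v,u$ from the one written for $u,v$ and using that $V$ is real valued:
\[
(\cL u,v)_{L^2(\Omega)}-(u,\cL v)_{L^2(\Omega)}=(u\vert_\cC,\partial_\nu v\vert_\cC)_{H^{1/2}(\cC)\times H^{-1/2}(\cC)}-(\partial_\nu u\vert_\cC,v\vert_\cC)_{H^{-1/2}(\cC)\times H^{1/2}(\cC)}
\]
for all $u,v\in H^1(\Omega)$ with $\cL u,\cL v\in L^2(\Omega)$. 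Note also that, $A_D$ being selfadjoint, $\ker(A_D-\lambda)\neq\{0\}$ forces $\lambda\in\dR$. For the inclusion ``$\subseteq$'', take $\varphi\in\dom\cD(\lambda)$ and $f_\lambda\in H^1(\Omega)$ with $\cL f_\lambda=\lambda f_\lambda$, $f_\lambda\vert_\cC=\varphi$; for $h\in\ker(A_D-\lambda)$ we have $h\vert_\cC=0$, so the second Green identity with $u=f_\lambda$, $v=h$ gives $(\lambda-\overline\lambda)(f_\lambda,h)_{L^2(\Omega)}=(\varphi,\partial_\nu h\vert_\cC)_{H^{1/2}(\cC)\times H^{-1/2}(\cC)}$, and the left side vanishes (as $\lambda\in\dR$ if $h\neq0$, trivially if $h=0$); hence $\varphi$ satisfies the condition on the right of \eqref{edomgammad}.

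For the inclusion ``$\supseteq$'', fix a real number $\mu<\essinf V$ with $\mu\neq\lambda$, so $\mu\in\rho(A_D)$, and let $\varphi$ satisfy $(\varphi,\partial_\nu h\vert_\cC)=0$ for all $h\in\ker(A_D-\lambda)$. Since $\mu\notin\sigma_p(A_D)$, the Dirichlet problem $\cL w=\mu w$, $w\vert_\cC=\varphi$ has a (unique) solution $w\in H^1(\Omega)$ — as recalled in the introduction, or by lifting $\varphi$ to some $w_0\in H^1(\Omega)$ and correcting within $H^1_0(\Omega)$, using that $\mathfrak a_D-\mu$ induces a bijection of $H^1_0(\Omega)$ onto $H^{-1}(\Omega)$. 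Applying the second Green identity to $u=w$, $v=h\in\ker(A_D-\lambda)$ gives $(\mu-\overline\lambda)(w,h)_{L^2(\Omega)}=(\varphi,\partial_\nu h\vert_\cC)=0$; since $\mu\neq\lambda=\overline\lambda$ when $h\neq0$, this means $w\perp\ker(A_D-\lambda)$ in $L^2(\Omega)$. As $A_D$ is selfadjoint with compact resolvent, $\ran(A_D-\lambda)$ is closed and equals $\ker(A_D-\lambda)^\perp$, so there is $u\in\dom A_D$ with $(A_D-\lambda)u=(\lambda-\mu)w$; then $f_\lambda:=w+u\in H^1(\Omega)$ satisfies $\cL f_\lambda=\cL w+A_D u=\mu w+\bigl((\lambda-\mu)w+\lambda u\bigr)=\lambda f_\lambda$ and $f_\lambda\vert_\cC=\varphi$ (recall $u\vert_\cC=0$ since $u\in\dom A_D\subset H^1_0(\Omega)$), so $\varphi\in\dom\cD(\lambda)$. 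For $\cN(\lambda)$ one argues in the same way, replacing the Dirichlet lift by the Neumann lift $\cL w=\mu w$, $\partial_\nu w\vert_\cC=\psi$ (available since $\mathfrak a_N-\mu$ is coercive on $H^1(\Omega)$) and $A_D$ by $A_N$.

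Finally, \eqref{showthis} and \eqref{dontshowthis} follow from \eqref{edomgammad}: if $\lambda\in\rho(A_D)$ then $\ker(A_D-\lambda)=\{0\}$, the condition in \eqref{edomgammad} is void, and $\dom\cD(\lambda)=H^{1/2}(\cC)$; conversely, if $\lambda\notin\rho(A_D)$ then $\lambda$ is an eigenvalue of $A_D$ (the spectrum being discrete), so some $0\neq h\in\ker(A_D-\lambda)$ has $\partial_\nu h\vert_\cC\neq0$, and non-degeneracy of the pairing $H^{1/2}(\cC)\times H^{-1/2}(\cC)$ yields $\varphi\in H^{1/2}(\cC)$ with $(\varphi,\partial_\nu h\vert_\cC)\neq0$, so $\varphi\notin\dom\cD(\lambda)$ by \eqref{edomgammad}. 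The one place where something beyond Green's identity and elementary Hilbert space arguments enters is the claim $\partial_\nu h\vert_\cC\neq0$ for a nonzero Dirichlet eigenfunction $h$ (equivalently $h\vert_\cC\neq0$ for a nonzero Neumann eigenfunction): this is the unique continuation property of $-\Delta+V$ with $V\in L^\infty(\Omega)$, since otherwise the zero extension of $h$ across $\cC$ would be a nonzero solution of $(-\Delta+V)\widetilde h=\lambda\widetilde h$ on $\dR^n$ vanishing on a nonempty open set. I expect this — together with the closed-range (compact resolvent) input used in the ``$\supseteq$'' step — to be the only non-formal ingredients; everything else is Green's identity and bookkeeping.
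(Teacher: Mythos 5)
Your proof is correct, and for the main identities \eqref{edomgammad} it takes a genuinely different route from the paper: there the two domain formulas are obtained in one line by citing the solvability theorem for the Dirichlet problem on Lipschitz domains (\cite{McL}, Theorem 4.10), which already packages the Fredholm alternative, whereas you prove them from scratch. Your inclusion $\subseteq$ via the second Green identity is the standard necessity argument (and your observation that $\ker(A_D-\lambda)\neq\{0\}$ forces $\lambda\in\dR$ correctly disposes of the conjugation issue). Your inclusion $\supseteq$ is the real added content: you first solve the boundary value problem at an auxiliary point $\mu<\essinf V$, where coercivity of $\mathfrak a_D-\mu$ (resp.\ $\mathfrak a_N-\mu$) makes Lax--Milgram applicable, and then use the orthogonality hypothesis together with the closedness of $\ran(A_D-\lambda)$ (compact resolvent, discrete spectrum) to translate the solution from $\mu$ back to $\lambda$. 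This amounts to reproving the cited McLean result in the present special case, and it is in the same spirit as the machinery the paper develops afterwards -- compare Lemma~\ref{gammalambdamu}, where exactly this correction $f_\lambda=f_\mu+(\lambda-\mu)(A_D-\lambda)^{-1}f_\mu$ appears. What the paper's citation buys is brevity; what your argument buys is self-containedness, at the cost of invoking the closed-range/compact-resolvent input explicitly. For \eqref{showthis} and \eqref{dontshowthis} your argument is just the contrapositive of the paper's and rests on the same two ingredients, namely unique continuation (a nonzero Dirichlet eigenfunction cannot also have vanishing Neumann trace, which you justify by the zero-extension argument) and discreteness of $\sigma(A_D)$, $\sigma(A_N)$ so that $\lambda\notin\rho(A_D)$ implies $\lambda\in\sigma_p(A_D)$. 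I see no gaps.
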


\begin{proof}
The equalities (\ref{edomgammad}) follow from \cite{McL}, Theorem 4.10. 
For \eqref{showthis} assume first that $\dom \cD(\lambda)=H^{1/2}(\cC)$. Then it follows
that $\partial_\nu f_\lambda\vert_\cC=0$ for all $f_\lambda\in\ker(A_D-\lambda)$. 
Moreover, as $f_\lambda\vert_\cC=0$ for all $f_\lambda\in\ker(A_D-\lambda)$
we conclude that $f_\lambda=0$ from the unique continuation property. This implies $\ker(A_D-\lambda)=\{0\}$ and hence $\lambda\in\rho(A_D)$. The converse implication in
\eqref{showthis} is immediate. The equivalence \eqref{dontshowthis} follows from a very similar reasoning.
\end{proof}

The next lemma shows for which $\lambda\in\dC$ the linear relations $\cD(\lambda)$ and $\cN(\lambda)$ are (the graphs of) operators mapping from
$H^{1/2}(\cC)$ to $H^{-1/2}(\cC)$ and
$H^{-1/2}(\cC)$ to $H^{1/2}(\cC)$, respectively. 

\begin{lemma}\label{mullem}
Let  $\lambda\in\dC$. 
Then
\begin{itemize}
 \item [(i)] $\mul\cD(\lambda)=\{0\}$ if and only if $\lambda\not\in\sigma_p(A_D)$,
 \item [(ii)] $\mul\cN(\lambda)=\{0\}$ if and only if $\lambda\not\in\sigma_p(A_N)$,
\end{itemize} 
and,
\begin{itemize}
 \item [(iii)] $\ker\cN(\lambda)\not=\{0\}$ if and only if $\lambda\in\sigma_p(A_D)$,
 \item [(iv)] $\ker\cD(\lambda)\not=\{0\}$ if and only if $\lambda\in\sigma_p(A_N)$.
\end{itemize} 
\end{lemma}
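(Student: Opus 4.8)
The plan is to prove the four equivalences by directly unwinding the definitions in \eqref{emulker}, combined with the unique continuation property already invoked in the proof of Proposition~\ref{domdomprop}. Note first that (i) and (ii) are essentially tautologies once one recalls that $\cD(\lambda)$ is the graph of an operator precisely when $\mul \cD(\lambda) = \{0\}$, and similarly that (iii) and (iv) are restatements of (ii) and (i) respectively via the identities $\ker \cN(\lambda) = \mul \cD(\lambda)$ and $\ker \cD(\lambda) = \mul \cN(\lambda)$ from \eqref{emulker}; so in fact it suffices to establish (i) and (ii).

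For (i): By \eqref{emulker} we have $\mul \cD(\lambda) = \{\partial_\nu f_\lambda|_\cC : f_\lambda \in \ker(A_D - \lambda)\}$. The implication ``$\lambda \notin \sigma_p(A_D) \Rightarrow \mul\cD(\lambda) = \{0\}$'' is immediate, since then $\ker(A_D-\lambda) = \{0\}$. Conversely, suppose $\mul \cD(\lambda) = \{0\}$ but $\lambda \in \sigma_p(A_D)$, so there is a nonzero $f_\lambda \in \ker(A_D - \lambda)$. Then $\partial_\nu f_\lambda|_\cC = 0$, and since $f_\lambda \in \dom A_D \subset H^1_0(\Omega)$ we also have $f_\lambda|_\cC = 0$; thus $f_\lambda$ solves $\cL f_\lambda = \lambda f_\lambda$ with vanishing Cauchy data, and the unique continuation property forces $f_\lambda = 0$, a contradiction. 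Hence $\lambda \notin \sigma_p(A_D)$. The argument for (ii) is the mirror image: $\mul \cN(\lambda) = \{f_\lambda|_\cC : f_\lambda \in \ker(A_N-\lambda)\}$ by \eqref{emulker}, the forward implication is trivial, and if $\mul\cN(\lambda) = \{0\}$ with $0 \neq f_\lambda \in \ker(A_N-\lambda)$, then $f_\lambda|_\cC = 0$ together with $\partial_\nu f_\lambda|_\cC = 0$ (the latter from $f_\lambda \in \dom A_N$) again yields $f_\lambda = 0$ by unique continuation, a contradiction.

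Finally (iii) and (iv) follow by pure linear-relation bookkeeping: $\ker \cN(\lambda) \neq \{0\}$ iff $\mul\cD(\lambda) \neq \{0\}$ (by \eqref{emulker}) iff $\lambda \in \sigma_p(A_D)$ (by (i)); and $\ker \cD(\lambda) \neq \{0\}$ iff $\mul \cN(\lambda) \neq \{0\}$ iff $\lambda \in \sigma_p(A_N)$ by (ii). The only genuine input beyond the definitions is the unique continuation property for the Schrödinger equation $\cL f = \lambda f$ on the Lipschitz domain $\Omega$ (applied to a solution with zero Cauchy data), which is the same tool already used above; there is no real obstacle here, and the ``hard part'', if any, is merely being careful that the Neumann trace $\partial_\nu f_\lambda|_\cC$ is well defined, which it is since $\Delta f_\lambda = V f_\lambda - \lambda f_\lambda \in L^2(\Omega)$ for $f_\lambda \in H^1(\Omega)$ with $\cL f_\lambda = \lambda f_\lambda$.
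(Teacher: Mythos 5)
Your proof is correct and follows essentially the same route as the paper: identify $\mul\cD(\lambda)$ and $\mul\cN(\lambda)$ via \eqref{emulker}, use the unique continuation property for a solution of $\cL f=\lambda f$ with vanishing Cauchy data to get the nontrivial direction of (i) and (ii), and deduce (iii) and (iv) from \eqref{emulker}. The only quibble is your opening remark that (i) and (ii) are ``essentially tautologies''---they are not, since the nontrivial implication genuinely requires unique continuation, which you do in fact then supply.
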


\begin{proof}
We verify item (i) only; the proof of item (ii) is analogous, items (iii) and (iv) follow from \eqref{emulker} and (i)-(ii).
Assume 
that $\mul\cD(\lambda)=\{0\}$ and let $f_\lambda\in \ker(A_D-\lambda)$, 
that is, $f_\lambda\in H^1(\Omega)$ satisfies $\cL f_\lambda=\lambda f_\lambda$ and $f_\lambda\vert_\cC=0$. 
As $\{f_\lambda\vert_\cC,\partial_\nu f_\lambda\vert_\cC\}=\{0,\partial_\nu f_\lambda\vert_\cC\}\in \cD(\lambda)$
we conclude that $\partial_\nu f_\lambda\vert_\cC=0$ from the assumption $\mul\cD(\lambda)=\{0\}$.
The unique continuation property implies $f_\lambda=0$
and hence $\ker(A_D-\lambda)=\{0\}$, that is, $\lambda\not\in\sigma_p(A_D)$.
Conversely, if $\lambda\not\in\sigma_p(A_D)$ then
it follows from \eqref{emulker} that $\mul\cD(\lambda)=\{0\}$.
\end{proof}

If $u, v \in H^1(\Omega)$ satisfy $\cL u, \cL v \in L^2(\Omega)$ then 
the second Green identity states
\begin{equation*} 
 (\cL u, v)_{L^2(\Omega)} - (u, \cL v)_{L^2(\Omega)}=\bigl( u |_\cC, \partial_\nu v |_\cC \bigr)_{H^{1/2}(\cC) \times H^{-1/2}(\cC)}
 - \bigl( \partial_\nu u |_\cC, v |_\cC \bigr)_{H^{-1/2}(\cC) \times H^{1/2}(\cC)},  
\end{equation*}
see, e.g.,~\cite{McL}, Theorem~4.4(iii).
As a consequence one deduces the next lemma.

\begin{lemma}\label{greenlem}
Let $\lambda,\mu\in\dC$ and suppose that 
$\{f_\lambda\vert_\cC,\partial_\nu f_\lambda\vert_\cC\}\in\cD(\lambda)$ and 
$\{g_\mu\vert_\cC,\partial_\nu g_\mu\vert_\cC\}\in\cD(\mu)$, or, equivalently,
$\{\partial_\nu f_\lambda\vert_\cC,f_\lambda\vert_\cC\}\in\cN(\lambda)$ and
$\{\partial_\nu g_\mu\vert_\cC,g_\mu\vert_\cC\}\in\cN(\mu)$.
Then
\begin{equation*}
\bigl(\partial_\nu f_\lambda\vert_\cC,g_\mu\vert_\cC\bigr)_{H^{-1/2}(\cC) \times H^{1/2}(\cC)} 
-\bigl(f_\lambda\vert_\cC,\partial_\nu g_\mu\vert_\cC\bigr)_{H^{1/2}(\cC) \times H^{-1/2}(\cC)}=(\overline\mu-\lambda)(f_\lambda,g_\mu)_{L^2(\Omega)}.
\end{equation*}
\end{lemma}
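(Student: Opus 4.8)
The plan is to apply the second Green identity directly to the pair $f_\lambda, g_\mu$ and then simplify using the defining equations $\cL f_\lambda = \lambda f_\lambda$ and $\cL g_\mu = \mu g_\mu$. First I would note that both $f_\lambda$ and $g_\mu$ lie in $H^1(\Omega)$ and satisfy $\cL f_\lambda = \lambda f_\lambda \in L^2(\Omega)$ and $\cL g_\mu = \mu g_\mu \in L^2(\Omega)$, so the hypotheses of the second Green identity are met. Substituting $u = f_\lambda$ and $v = g_\mu$ into that identity gives
\[
(\cL f_\lambda, g_\mu)_{L^2(\Omega)} - (f_\lambda, \cL g_\mu)_{L^2(\Omega)}
= \bigl(f_\lambda|_\cC, \partial_\nu g_\mu|_\cC\bigr)_{H^{1/2}(\cC)\times H^{-1/2}(\cC)}
- \bigl(\partial_\nu f_\lambda|_\cC, g_\mu|_\cC\bigr)_{H^{-1/2}(\cC)\times H^{1/2}(\cC)}.
\]
For the left-hand side I would compute $(\cL f_\lambda, g_\mu)_{L^2(\Omega)} = \lambda (f_\lambda, g_\mu)_{L^2(\Omega)}$ and $(f_\lambda, \cL g_\mu)_{L^2(\Omega)} = (f_\lambda, \mu g_\mu)_{L^2(\Omega)} = \overline\mu\,(f_\lambda, g_\mu)_{L^2(\Omega)}$, the conjugate on $\mu$ arising because the second slot of the $L^2$ inner product is conjugate-linear. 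Hence the left-hand side equals $(\lambda - \overline\mu)(f_\lambda, g_\mu)_{L^2(\Omega)}$.

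Multiplying the resulting identity by $-1$ rearranges it into exactly the claimed formula:
\[
\bigl(\partial_\nu f_\lambda|_\cC, g_\mu|_\cC\bigr)_{H^{-1/2}(\cC)\times H^{1/2}(\cC)}
- \bigl(f_\lambda|_\cC, \partial_\nu g_\mu|_\cC\bigr)_{H^{1/2}(\cC)\times H^{-1/2}(\cC)}
= (\overline\mu - \lambda)(f_\lambda, g_\mu)_{L^2(\Omega)}.
\]
The only point requiring a little care is the bookkeeping on the conjugation and sign, together with the observation that the equivalence between the two formulations of the hypotheses (in terms of $\cD(\lambda),\cD(\mu)$ versus $\cN(\lambda),\cN(\mu)$) is immediate from $\cD(\lambda)^{-1} = \cN(\lambda)$, so no separate argument is needed there. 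There is no real obstacle here: the lemma is a direct algebraic consequence of the second Green identity, and the entire proof is the substitution and simplification just described.
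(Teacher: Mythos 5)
Your proof is correct and is precisely the argument the paper intends: the lemma is stated there as an immediate consequence of the second Green identity, obtained exactly by the substitution $u=f_\lambda$, $v=g_\mu$ and the sign/conjugation bookkeeping you carry out. Nothing further is needed.
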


We note that \eqref{showthis} and Lemma~\ref{greenlem} imply 
\[
 (\cD(\lambda)\varphi,\psi)_{H^{-1/2}(\cC)\times H^{1/2}(\cC)}=(\varphi,\cD(\overline\lambda)\psi)_{H^{1/2}(\cC)\times H^{-1/2}(\cC)}
\]
for all $\lambda\in\rho(A_D)$ and all $\varphi,\psi\in H^{1/2}(\cC)$. 
Therefore for all $\lambda\in\rho(A_D)$ the operator
$$\cD(\lambda) \colon H^{1/2}(\cC)\rightarrow H^{-1/2}(\cC)$$ 
is closed and hence bounded by the closed
graph theorem. Similarly it follows from \eqref{dontshowthis} and Lemma~\ref{greenlem} that 
$\cN(\lambda) \colon H^{-1/2}(\cC)\rightarrow H^{1/2}(\cC)$ is a bounded operator for all $\lambda\in\rho(A_N)$.

For all $\lambda\in\dC$ define the subspaces $\gamma_\cD(\lambda)$ 
of $H^{1/2}(\cC)\times L^2(\Omega)$ and 
$\gamma_\cN(\lambda)$ of $H^{-1/2}(\cC)\times L^2(\Omega)$ by
\[
\begin{split}
\gamma_\cD(\lambda):=&\bigl\{\{f_\lambda\vert_\cC,f_\lambda\} \in H^{1/2}(\cC)\times L^2(\Omega):
    f_\lambda\in H^1(\Omega) \mbox{ and } \cL f_\lambda=\lambda f_\lambda \bigr\},\\
\gamma_\cN(\lambda):=&\bigl\{\{\partial_\nu f_\lambda\vert_\cC,f_\lambda \} \in H^{-1/2}(\cC)\times L^2(\Omega):
   f_\lambda\in H^1(\Omega) \mbox{ and } \cL f_\lambda=\lambda f_\lambda \bigr\}.
\end{split}
\]
Note that $\ran\gamma_\cD(\lambda)$ and $\ran\gamma_\cN(\lambda)$ are contained in $H^1(\Omega)$.
Obviously, we have
\begin{equation*}
  \dom \gamma_\cD(\lambda) =\dom \cD(\lambda),\qquad
  \mul \gamma_\cD(\lambda) =\ker(A_D-\lambda),
\end{equation*}
and 
\begin{equation}\label{edomdom2}
  \dom \gamma_\cN(\lambda) =\dom \cN(\lambda),\qquad  \mul \gamma_\cN(\lambda) =\ker(A_N-\lambda).
\end{equation}
Furthermore, it is clear that $\ker\gamma_\cD(\lambda)=\{0\}$ and $\ker\gamma_\cN(\lambda)=\{0\}$ for all $\lambda\in\dC$.

In the next lemma it is shown how $\gamma_\cD(\lambda)$ and $\gamma_\cD(\mu)$ 
are related to each other for different $\lambda,\mu\in\dC$.
If both points $\lambda$ and $\mu$ are not in the spectrum of $A_D$ 
these facts are known, see, e.g., \cite{BehR}, Lemma 2.4.
Similar results are valid for $\gamma_\cN(\lambda)$, $\gamma_\cN(\mu)$ and $A_N$.

\begin{lemma}\label{gammalambdamu}
Let $\lambda,\mu\in\dC$.
Then
\begin{equation*}
\gamma_\cD(\lambda)\cap\bigl( \dom\gamma_\cD(\mu)\times L^2(\Omega)\bigr)=\bigl(I+(\lambda-\mu)(A_D-\lambda)^{-1}\bigr)\gamma_\cD(\mu)
\end{equation*}
and
\begin{equation}\label{euio}
 \gamma_\cN(\lambda)\cap\bigl(\dom\gamma_\cN(\mu)\times L^2(\Omega)\bigr)=\bigl(I+(\lambda-\mu)(A_N-\lambda)^{-1}\bigr)\gamma_\cN(\mu).
\end{equation}
\end{lemma}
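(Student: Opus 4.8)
The plan is to prove the identity for $\gamma_\cD(\lambda)$; the statement for $\gamma_\cN(\lambda)$ follows by replacing $A_D$ by $A_N$ and the Dirichlet trace by the Neumann trace throughout, so I will only treat the first equation. The key point is that both sides are subspaces of $H^{1/2}(\cC)\times L^2(\Omega)$, so it suffices to check a double inclusion of sets of pairs. Recall that $\ker\gamma_\cD(\mu)=\{0\}$, so $\gamma_\cD(\mu)$ is the graph of an operator; this will streamline the bookkeeping on the right-hand side.

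First I would establish the inclusion $\supseteq$. Take a pair $\{\varphi,f_\mu\}\in\gamma_\cD(\mu)$, so $f_\mu\in H^1(\Omega)$, $\cL f_\mu=\mu f_\mu$, and $f_\mu\vert_\cC=\varphi$. I claim that $g:=\bigl(I+(\lambda-\mu)(A_D-\lambda)^{-1}\bigr)f_\mu$ lies in $H^1(\Omega)$, satisfies $\cL g=\lambda g$, and has $g\vert_\cC=\varphi$. Since $(A_D-\lambda)^{-1}$ is a (possibly multivalued, when $\lambda\in\sigma_p(A_D)$) selfadjoint relation, one has to be a little careful: when $\lambda\in\sigma_p(A_D)$ the object on the right-hand side is a relation, not an operator, and the correct reading is that $\{\varphi,g\}$ belongs to the indicated set precisely when $g-f_\mu=(\lambda-\mu)h$ for some $h$ with $\{f_\mu,h\}\in(A_D-\lambda)^{-1}$, i.e. $h\in\dom A_D\subset H^1_0(\Omega)$ and $(A_D-\lambda)h=f_\mu$, equivalently $\cL h-\lambda h=f_\mu$ and $h\vert_\cC=0$. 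Given such an $h$, a direct computation gives $\cL g-\lambda g=\cL f_\mu-\lambda f_\mu+(\lambda-\mu)(\cL h-\lambda h)=(\mu-\lambda)f_\mu+(\lambda-\mu)f_\mu=0$, while $g\vert_\cC=f_\mu\vert_\cC+(\lambda-\mu)h\vert_\cC=\varphi$; and $g\in H^1(\Omega)$ because $f_\mu\in H^1(\Omega)$ and $h\in H^1_0(\Omega)$. Hence $\{\varphi,g\}\in\gamma_\cD(\lambda)$, and since $\varphi=f_\mu\vert_\cC\in\dom\gamma_\cD(\mu)$ this shows the membership in the intersection on the left.

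Next I would establish $\subseteq$. Take $\{\varphi,f_\lambda\}\in\gamma_\cD(\lambda)$ with $\varphi\in\dom\gamma_\cD(\mu)$; so $f_\lambda\in H^1(\Omega)$, $\cL f_\lambda=\lambda f_\lambda$, $f_\lambda\vert_\cC=\varphi$, and there exists $f_\mu\in H^1(\Omega)$ with $\cL f_\mu=\mu f_\mu$ and $f_\mu\vert_\cC=\varphi$. Set $h:=f_\lambda-f_\mu\in H^1(\Omega)$. Then $h\vert_\cC=0$, so $h\in H^1_0(\Omega)$, and $\cL h-\lambda h=\cL f_\lambda-\lambda f_\lambda-(\cL f_\mu-\lambda f_\mu)=0-(\mu-\lambda)f_\mu=(\lambda-\mu)f_\mu\in L^2(\Omega)$. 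Therefore $h\in\dom A_D$ and $(A_D-\lambda)h=(\lambda-\mu)f_\mu$, i.e. $\{f_\mu,\tfrac{1}{\lambda-\mu}h\}\in(A_D-\lambda)^{-1}$ when $\lambda\neq\mu$ (and $h=0$ when $\lambda=\mu$, where the identity is a triviality). Consequently $f_\lambda=f_\mu+h=\bigl(I+(\lambda-\mu)(A_D-\lambda)^{-1}\bigr)f_\mu$, which exhibits $\{\varphi,f_\lambda\}$ as an element of the right-hand side. This proves the desired equality.

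The only genuinely delicate point is the correct interpretation of the operator $I+(\lambda-\mu)(A_D-\lambda)^{-1}$ when $\lambda\in\sigma_p(A_D)$, where $(A_D-\lambda)^{-1}$ is a selfadjoint relation with multivalued part $\ker(A_D-\lambda)$; one must read the right-hand side as a sum of relations in the sense of the calculus recalled in the appendix, and verify that the two set-theoretic descriptions above match it. Once this is set up, everything else is the elementary algebra of Green's formula and the definitions of $A_D$ and $\gamma_\cD$, together with the already noted fact that $H^1_0(\Omega)=\ker(u\mapsto u\vert_\cC)$; no elliptic regularity beyond the basic trace theory of Section~\ref{sec2} is needed.
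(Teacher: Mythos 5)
Your argument is correct and follows essentially the same route as the paper's own proof (which treats the $\gamma_\cN$ identity and notes the $\gamma_\cD$ case is analogous): write out the product relation on the right-hand side explicitly as a set of pairs and verify both inclusions via the substitutions $f_\lambda=f_\mu+(\lambda-\mu)h$ and $h=(f_\lambda-f_\mu)/(\lambda-\mu)$. One small slip in your preamble: $\ker\gamma_\cD(\mu)=\{0\}$ only says $\gamma_\cD(\mu)$ is injective, not that it is the graph of an operator --- single-valuedness is governed by $\mul\gamma_\cD(\mu)=\ker(A_D-\mu)$, which is nontrivial when $\mu\in\sigma_p(A_D)$; fortunately your argument never actually uses the claimed single-valuedness, since you work with pairs $\{\varphi,f_\mu\}$ throughout.
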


\begin{proof}
We verify the statement for $\gamma_\cN$; the proof for $\gamma_\cD$ is completely analogous.
We may assume that $\lambda\not=\mu$; 
otherwise the statement is obviously true.
Note first that in the sense of linear relations 
we have
\begin{eqnarray*}
\lefteqn{
\bigl(I+(\lambda-\mu)(A_N-\lambda)^{-1}\bigr)\gamma_\cN(\mu)
} \hspace{15mm} \\*
& = & 
\left\{\bigl\{ \partial_\nu f_\mu\vert_\cC , f_\mu +(\lambda-\mu) h\bigr \}:
\begin{array}{l} \mbox{there exist } f_\mu\in H^1(\Omega) \mbox{ and } h \in \dom A_N \\
\mbox{such that } \cL f_\mu=\mu f_\mu \mbox{ and } f_\mu=(A_N-\lambda)h \end{array}\right\} .
\end{eqnarray*}
For the inclusion $\supset$ in \eqref{euio} 
let $f_\lambda:=f_\mu +(\lambda-\mu) h$ with $\cL f_\mu=\mu f_\mu$, $f_\mu\in H^1(\Omega)$, 
and $f_\mu=(A_N-\lambda)h$ for some
$h\in\dom A_N$.
Then we have $f_\lambda\in H^1(\Omega)$,
$$(\cL-\lambda)f_\lambda=(\cL-\lambda)\bigl(f_\mu +(\lambda-\mu) h\bigr)=(\mu-\lambda)f_\mu+(\lambda-\mu)(A_N-\lambda)h=0$$
and $\partial_\nu f_\lambda\vert_\cC=\partial_\nu(f_\mu +(\lambda-\mu) h)\vert_\cC=\partial_\nu f_\mu\vert_\cC$.
Therefore
\begin{equation*}
 \bigl\{ \partial_\nu f_\mu\vert_\cC , f_\mu +(\lambda-\mu) h\bigr\}=
\bigl\{\partial_\nu f_\lambda\vert_\cC , f_\lambda\bigr\}\in\gamma_\cN(\lambda).
\end{equation*}
For the inclusion $\subset$ in \eqref{euio} consider $\{\partial_\nu f_\lambda\vert_\cC,f_\lambda\}\in\gamma_\cN(\lambda)$ 
and suppose $\partial_\nu f_\lambda\vert_\cC
\in\dom\gamma_\cN(\mu)$. Then 
$f_\lambda \in H^1(\Omega)$, $\cL f_\lambda=\lambda f_\lambda$, and there exists an $f_\mu\in H^1(\Omega)$ such that $\cL f_\mu=\mu f_\mu$ and $\partial_\nu f_\mu\vert_\cC=\partial_\nu f_\lambda\vert_\cC$.
It follows that 
$$h:=\frac{f_\lambda-f_\mu}{\lambda-\mu}\in\dom A_N$$ 
and $f_\mu+(\lambda-\mu) h=f_\lambda$.
Therefore
$$\bigl\{ \partial_\nu f_\lambda\vert_\cC , f_\lambda\bigr\}=
\bigl\{ \partial_\nu f_\mu\vert_\cC, f_\mu+(\lambda-\mu) h \bigr\}
\in\bigl(I+(\lambda-\mu)(A_N-\lambda)^{-1}\bigr)\gamma_\cN(\mu)$$
as required.
\end{proof}

In the next lemma the adjoints of $\gamma_\cD(\lambda)$ and $\gamma_\cN(\lambda)$ are computed.
Recall that  $\gamma_\cD(\lambda)$ is  a linear relation in $H^{1/2}(\cC)\times L^2(\Omega)$.
So the adjoint $\gamma_\cD(\lambda)^\prime$ 
is a linear relation in $L^2(\Omega)\times H^{-1/2}(\cC)$.
Similarly $\gamma_\cN(\lambda)$ is a linear relation in $H^{-1/2}(\cC)\times L^2(\Omega)$
and its adjoint $\gamma_\cN(\lambda)^\prime$ is a linear relation in $L^2(\Omega)\times H^{1/2}(\cC)$.

\begin{lemma}\label{gammaadj}
Let $\lambda\in\dC$.
Then
\begin{equation*}
  \gamma_\cD(\lambda)^\prime =\bigl\{\{(A_D-\overline\lambda)g,-\partial_\nu g\vert_\cC\}:g\in\dom A_D\bigr\} 
\end{equation*}  
  and 
  \begin{equation}\label{eaaa}
  \gamma_\cN(\lambda)^\prime =\bigl\{\{(A_N-\overline\lambda)g, g\vert_\cC\}:g\in\dom A_N\bigr\}.
\end{equation}
\end{lemma}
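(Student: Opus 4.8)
The plan is to verify the formula for $\gamma_\cN(\lambda)^\prime$ directly from the definition of the adjoint of a linear relation; the argument for $\gamma_\cD(\lambda)^\prime$ is identical with the roles of the Dirichlet and Neumann traces interchanged, so I would only write out the Neumann case. Recall that an element $\{k,\chi\}\in L^2(\Omega)\times H^{1/2}(\cC)$ belongs to $\gamma_\cN(\lambda)^\prime$ precisely when
\[
(f_\lambda,k)_{L^2(\Omega)}=(\partial_\nu f_\lambda\vert_\cC,\chi)_{H^{-1/2}(\cC)\times H^{1/2}(\cC)}
\]
for all $\{\partial_\nu f_\lambda\vert_\cC,f_\lambda\}\in\gamma_\cN(\lambda)$, that is, for all $f_\lambda\in H^1(\Omega)$ with $\cL f_\lambda=\lambda f_\lambda$.

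For the inclusion $\supset$ I would take $g\in\dom A_N$, set $k=(A_N-\overline\lambda)g$ and $\chi=g\vert_\cC$, and check the defining identity. Since $\partial_\nu g\vert_\cC=0$, the second Green identity (stated just before Lemma~\ref{greenlem}) applied to $u=f_\lambda$ and $v=g$ gives
\[
(\cL f_\lambda,g)_{L^2(\Omega)}-(f_\lambda,\cL g)_{L^2(\Omega)}
=(f_\lambda\vert_\cC,\partial_\nu g\vert_\cC)_{H^{1/2}(\cC)\times H^{-1/2}(\cC)}
-(\partial_\nu f_\lambda\vert_\cC,g\vert_\cC)_{H^{-1/2}(\cC)\times H^{1/2}(\cC)},
\]
and because $\cL f_\lambda=\lambda f_\lambda$, $\cL g=A_N g$ and $\partial_\nu g\vert_\cC=0$, the left-hand side equals $(f_\lambda,\overline\lambda g-A_N g)_{L^2(\Omega)}=-(f_\lambda,(A_N-\overline\lambda)g)_{L^2(\Omega)}$. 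Rearranging yields exactly $(f_\lambda,k)_{L^2(\Omega)}=(\partial_\nu f_\lambda\vert_\cC,\chi)_{H^{-1/2}(\cC)\times H^{1/2}(\cC)}$, so $\{k,\chi\}\in\gamma_\cN(\lambda)^\prime$.

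For the reverse inclusion $\subset$, suppose $\{k,\chi\}\in\gamma_\cN(\lambda)^\prime$. The key step — and the main obstacle — is to produce a $g\in\dom A_N$ with $(A_N-\overline\lambda)g=k$ and $g\vert_\cC=\chi$; once this is available, testing against all $f_\lambda\in\ker(\cL-\lambda)$ and subtracting the identity already proved in the $\supset$ direction will force $\chi=g\vert_\cC$ after using the unique continuation property to kill the ambiguity coming from $\ker(A_N-\overline\lambda)$, and one concludes $\{k,\chi\}$ has the asserted form. To get $g$, I would consider two cases. If $\overline\lambda\in\rho(A_N)$ simply put $g=(A_N-\overline\lambda)^{-1}k\in\dom A_N$; then for every $f_\lambda\in\ker(\cL-\lambda)$ the Green computation above gives $(f_\lambda,k)_{L^2(\Omega)}=(\partial_\nu f_\lambda\vert_\cC,g\vert_\cC)_{H^{-1/2}(\cC)\times H^{1/2}(\cC)}$, and subtracting from the defining property of $\{k,\chi\}$ shows $(\partial_\nu f_\lambda\vert_\cC,\chi-g\vert_\cC)=0$ for all such $f_\lambda$; since $\chi-g\vert_\cC\in H^{1/2}(\cC)\subset$ the appropriate space and the normal traces $\partial_\nu f_\lambda\vert_\cC$ range over all of $H^{-1/2}(\cC)$ when $\overline\lambda\in\rho(A_N)$ (because $\dom\cN(\overline\lambda)=H^{-1/2}(\cC)$, i.e.\ $\ran\gamma_\cN(\overline\lambda)$ is dense enough — more precisely $\dom\gamma_\cN(\overline\lambda)=H^{-1/2}(\cC)$ by \eqref{dontshowthis}), we get $\chi=g\vert_\cC$, as desired. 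If $\overline\lambda\in\sigma_p(A_N)$ the same scheme works after splitting off $\ker(A_N-\overline\lambda)$: one shows first that $k$ must be orthogonal to $\ker(A_N-\overline\lambda)$ by testing the defining identity against the finitely many $f_\lambda$ lying in $\ker(A_N-\lambda)$ — here one uses $\ker(A_N-\overline\lambda)=\overline{\ker(A_N-\lambda)}$ since $V$ is real — solves $(A_N-\overline\lambda)g=k$ on the orthogonal complement, and then argues as before that $\partial_\nu f_\lambda\vert_\cC$ still ranges over a dense subspace of $H^{-1/2}(\cC)$ sufficient to pin down $\chi$. The delicate point throughout is the surjectivity/density of the map $f_\lambda\mapsto\partial_\nu f_\lambda\vert_\cC$ on the solution space, which is precisely the content of Proposition~\ref{domdomprop} and \eqref{dontshowthis}; I expect that to be where the real work lies, everything else being Green's identity and bookkeeping.
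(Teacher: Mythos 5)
Your \lq\lq$\supset$\rq\rq\ direction and the overall architecture of the \lq\lq$\subset$\rq\rq\ direction match the paper's proof: Green's identity gives $\{(A_N-\overline\lambda)g, g\vert_\cC\}\in\gamma_\cN(\lambda)^\prime$, and for the converse one first produces a $g\in\dom A_N$ with $(A_N-\overline\lambda)g=k$ (using that $k\perp\ker(A_N-\lambda)$, i.e.\ $\dom\gamma_\cN(\lambda)^\prime\subset(\mul\gamma_\cN(\lambda))^\perp=\ran(A_N-\overline\lambda)$) and then compares traces. Your treatment of the case $\overline\lambda\in\rho(A_N)$ is correct, since there $\dom\cN(\lambda)=H^{-1/2}(\cC)$ by \eqref{dontshowthis}.

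In the case $\lambda\in\sigma_p(A_N)$ (necessarily $\lambda\in\dR$), however, your closing step has a genuine gap. The Neumann traces $\partial_\nu f_\lambda\vert_\cC$ of solutions of $\cL f_\lambda=\lambda f_\lambda$ do \emph{not} range over a dense subspace of $H^{-1/2}(\cC)$: by Proposition~\ref{domdomprop} and \eqref{edomdom2} they fill exactly the closed \emph{proper} subspace $\dom\cN(\lambda)$, whose annihilator in $H^{1/2}(\cC)$ is the nontrivial finite-dimensional space $\bigl\{f_\lambda\vert_\cC:f_\lambda\in\ker(A_N-\lambda)\bigr\}$ (nontrivial precisely by unique continuation). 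Hence from $(\psi,\chi-g\vert_\cC)_{H^{-1/2}(\cC)\times H^{1/2}(\cC)}=0$ for all $\psi\in\dom\cN(\lambda)$ you cannot conclude $\chi=g\vert_\cC$ for your particular solution $g\perp\ker(A_N-\lambda)$; you can only conclude that $\chi-g\vert_\cC=k_\lambda\vert_\cC$ for some $k_\lambda\in\ker(A_N-\lambda)$. The missing step --- and exactly how the paper finishes --- is to replace $g$ by $g+k_\lambda$, which still lies in $\dom A_N$, still satisfies $(A_N-\overline\lambda)(g+k_\lambda)=k$ because $\overline\lambda=\lambda$, and now has the required trace $\chi$. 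Unique continuation does not \lq\lq kill the ambiguity\rq\rq\ coming from $\ker(A_N-\overline\lambda)$: that ambiguity is genuine and must be absorbed into the choice of $g$. With this adjustment your argument coincides with the paper's.
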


\begin{proof}
We prove \eqref{eaaa}. 
First the inclusion $\supset$ will be shown.
Let $g\in\dom A_N$.
We shall show that 
$\{(A_N-\overline\lambda)g, g\vert_\cC\} \in\gamma_\cN(\lambda)^\prime$.
Indeed, one has $\partial_\nu g\vert_\cC=0$
and for any
 $\{\partial_\nu f_\lambda\vert_\cC,f_\lambda\}\in\gamma_\cN(\lambda)$
we compute with the help of Green's identity that 
\begin{equation}\label{ecompu}
\begin{split}
(f_\lambda,(A_N-\overline\lambda)g)_{L^2(\Omega)}
&=(f_\lambda,A_N g)_{L^2(\Omega)} 
   -(\cL f_\lambda,g)_{L^2(\Omega)}\\
&=(\partial_\nu f_\lambda\vert_\cC,g\vert_\cC)_{H^{-1/2}(\cC) \times H^{1/2}(\cC)}
   -(f_\lambda\vert_\cC,\partial_\nu g\vert_\cC)_{H^{1/2}(\cC) \times H^{-1/2}(\cC)}\\
&=(\partial_\nu f_\lambda\vert_\cC,g\vert_\cC)_{H^{-1/2}(\cC) \times H^{1/2}(\cC)}.
\end{split}
\end{equation}
This implies that
$\{(A_N-\overline\lambda)g, g\vert_\cC\}\in\gamma_\cN(\lambda)^\prime$. 

For the inclusion $\subset$ we have to check that for any element 
$\{h,\varphi\}\in\gamma_\cN(\lambda)^\prime$
there exists a $g\in\dom A_N$
such that
\begin{equation}\label{etocheck}
 \{h,\varphi\}=\bigl\{(A_N-\overline\lambda)g,g\vert_\cC\bigr\}.
\end{equation}
Since
\[
\dom\gamma_\cN(\lambda)^\prime \subset \bigl( \mul\gamma_\cN(\lambda)\bigr)^\bot=\bigl(\ker(A_N-\lambda)\bigr)^\bot=\ran(A_N-\overline\lambda)
\]
there exists a $k\in\dom A_N$ with 
\begin{equation}\label{ehk}
h=(A_N-\overline\lambda)k.
\end{equation} 
Hence 
$\{(A_N-\overline\lambda)k,\varphi\} = \{h,\varphi\}\in\gamma_\cN(\lambda)^\prime$
and for all $\{\partial_\nu f_\lambda\vert_\cC,f_\lambda\}
\in\gamma_\cN(\lambda)$ we have by the definition of the adjoint 
\begin{equation}\label{e123}
(f_\lambda,(A_N-\overline\lambda)k)_{L^2(\Omega)}
=(\partial_\nu f_\lambda\vert_\cC,\varphi)_{H^{-1/2}(\cC) \times H^{1/2}(\cC)}.
\end{equation}
On the other hand the same calculation as in \eqref{ecompu} with Green's identity yields
\begin{equation}\label{e456}
(f_\lambda,(A_N-\overline\lambda)k)_{L^2(\Omega)}
=(\partial_\nu f_\lambda\vert_\cC,k\vert_\cC)_{H^{-1/2}(\cC) \times H^{1/2}(\cC)}.
\end{equation}
From \eqref{e123} and \eqref{e456} we obtain
$(\psi,\varphi-k\vert_\cC)_{H^{-1/2}(\cC) \times H^{1/2}(\cC)}=0$ 
for all $\psi\in\dom\gamma_\cN(\lambda)$
and hence \eqref{edomdom2} and Proposition~\ref{domdomprop} imply that there exists a 
$k_\lambda\in\ker(A_N-\lambda)$ such that
\begin{equation}\label{ehkspur}
 k_\lambda\vert_\cC=\varphi-k\vert_\cC.
\end{equation}
Note that $k_\lambda=0$ if $\lambda\not\in\sigma_p(A_N)$, in particular, $k_\lambda=0$ if $\lambda\in\dC\setminus\dR$.
It follows from \eqref{ehk} and \eqref{ehkspur} that $g:=k+k_\lambda\in\dom A_N$ satisfies 
$h=(A_N-\overline\lambda)g$ and $g\vert_\cC=\varphi$.
We have shown \eqref{etocheck} and hence \eqref{eaaa} is proved. 
\end{proof}

As an immediate consequence of Lemma~\ref{gammaadj} we have
\[
 \begin{split}
  \mul\gamma_\cD(\lambda)^\prime =\bigl\{\partial_\nu g\vert_\cC: g\in\ker(A_D-\overline\lambda)\bigr\} 
 \end{split}
\]
and 
\[
 \mul\gamma_\cN(\lambda)^\prime =\bigl\{g\vert_\cC: g\in\ker(A_N-\overline\lambda)\bigr\}.
\]

Note also that if $\lambda\in\rho(A_D)$ then
\[
 \gamma_\cD(\lambda)^\prime \colon L^2(\Omega)\rightarrow H^{-1/2}(\cC),\quad h\mapsto -\partial_\nu \bigl((A_D-\overline\lambda)^{-1}h\bigr)\vert_\cC,
\]
is a closed operator defined on the whole space $L^2(\Omega)$, and hence 
$\gamma_\cD(\lambda)^\prime$ is a bounded operator; cf.\ \cite{BehR}, Lemma 2.4.
Similarly,
\[
\gamma_\cN(\lambda)^\prime \colon L^2(\Omega)\rightarrow H^{1/2}(\cC),\quad h\mapsto \bigl((A_N-\overline\lambda)^{-1}h\bigr)\vert_\cC,
\]
is a bounded operator  for all $\lambda\in\rho(A_N)$.

\begin{theorem}\label{dnthm}
Let $\lambda,\mu\in\dC$. Then
\begin{equation}\label{ednrel}
 \cD(\lambda)-\cD(\overline\mu)=(\overline\mu-\lambda)\gamma_\cD(\mu)^\prime\gamma_\cD(\lambda)
\end{equation}
and
\begin{equation}\label{enmrel}
 \cN(\lambda)-\cN(\overline\mu)=(\lambda-\overline\mu)\gamma_\cN(\mu)^\prime\gamma_\cN(\lambda).
\end{equation}
\end{theorem}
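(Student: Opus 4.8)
The plan is to verify the identity \eqref{enmrel} directly from the definitions of $\cN(\lambda)$ and $\gamma_\cN(\mu)^\prime\gamma_\cN(\lambda)$ as linear relations; the identity \eqref{ednrel} then follows by the completely analogous argument, replacing $A_N$ by $A_D$, the Neumann trace by the Dirichlet trace and making the obvious sign and index adjustments. Throughout, the key computational inputs are the explicit description of $\gamma_\cN(\mu)^\prime$ from Lemma~\ref{gammaadj} (namely $\gamma_\cN(\mu)^\prime=\{\{(A_N-\overline\mu)g,g\vert_\cC\}:g\in\dom A_N\}$), the relation between $\gamma_\cN(\lambda)$ and $\gamma_\cN(\mu)$ from Lemma~\ref{gammalambdamu}, and Green's identity.

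First I would spell out the left-hand side of \eqref{enmrel} as a linear relation in $H^{-1/2}(\cC)\times H^{1/2}(\cC)$: by definition of the difference (sum) of linear relations,
\[
\cN(\lambda)-\cN(\overline\mu)=\bigl\{\{\psi,\psi_1-\psi_2\}:\{\psi,\psi_1\}\in\cN(\lambda),\ \{\psi,\psi_2\}\in\cN(\overline\mu)\bigr\}.
\]
Concretely, $\{\psi,\psi_1\}\in\cN(\lambda)$ means there is $f_\lambda\in H^1(\Omega)$ with $\cL f_\lambda=\lambda f_\lambda$, $\partial_\nu f_\lambda\vert_\cC=\psi$ and $f_\lambda\vert_\cC=\psi_1$; similarly $\{\psi,\psi_2\}\in\cN(\overline\mu)$ gives $f_{\overline\mu}\in H^1(\Omega)$ with $\cL f_{\overline\mu}=\overline\mu f_{\overline\mu}$, $\partial_\nu f_{\overline\mu}\vert_\cC=\psi$ and $f_{\overline\mu}\vert_\cC=\psi_2$. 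On the right-hand side, $(\lambda-\overline\mu)\gamma_\cN(\mu)^\prime\gamma_\cN(\lambda)$ consists of pairs $\{\psi,(\lambda-\overline\mu)g\vert_\cC\}$ where $g\in\dom A_N$ and $(A_N-\overline\mu)g=f_\lambda$ for some $f_\lambda\in H^1(\Omega)$ with $\cL f_\lambda=\lambda f_\lambda$ and $\partial_\nu f_\lambda\vert_\cC=\psi$ (here I used that $\mul\gamma_\cN(\lambda)^\prime$ issues are absorbed in the composition, and that $\ker\gamma_\cN(\lambda)=\{0\}$ so $\gamma_\cN(\lambda)$ maps $\psi$ to the single element $f_\lambda$ when $\psi\in\dom\cN(\lambda)$, but more carefully one works with the relation composition). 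I would then prove the two inclusions.

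For $\supset$: given $g\in\dom A_N$ with $f_\lambda:=(A_N-\overline\mu)g\in\ker(\cL-\lambda)$ and $\partial_\nu f_\lambda\vert_\cC=\psi$, set $f_{\overline\mu}:=f_\lambda-(\lambda-\overline\mu)g=f_\lambda+(\overline\mu-\lambda)g$. A one-line computation as in the proof of Lemma~\ref{gammalambdamu} shows $(\cL-\overline\mu)f_{\overline\mu}=0$ and, since $\partial_\nu g\vert_\cC=0$ (as $g\in\dom A_N$), also $\partial_\nu f_{\overline\mu}\vert_\cC=\partial_\nu f_\lambda\vert_\cC=\psi$. Hence $\{\psi,f_\lambda\vert_\cC\}\in\cN(\lambda)$ and $\{\psi,f_{\overline\mu}\vert_\cC\}\in\cN(\overline\mu)$, and their difference is $\{\psi,f_\lambda\vert_\cC-f_{\overline\mu}\vert_\cC\}=\{\psi,(\lambda-\overline\mu)g\vert_\cC\}$, which is the given right-hand side element. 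For $\subset$: start from $f_\lambda,f_{\overline\mu}$ with the common Neumann trace $\psi$ as above; then $g:=(f_\lambda-f_{\overline\mu})/(\lambda-\overline\mu)$ satisfies $(A_N-\overline\mu)g=f_\lambda$ — one checks $(\cL-\overline\mu)(f_\lambda-f_{\overline\mu})=(\lambda-\overline\mu)f_\lambda$ and $\partial_\nu(f_\lambda-f_{\overline\mu})\vert_\cC=0$, so $g\in\dom A_N$ — and then $(\lambda-\overline\mu)g\vert_\cC=f_\lambda\vert_\cC-f_{\overline\mu}\vert_\cC$ is exactly the second component of the left-hand side element; moreover by Lemma~\ref{gammaadj} this exhibits the left-hand side pair as lying in $(\lambda-\overline\mu)\gamma_\cN(\mu)^\prime\gamma_\cN(\lambda)$.

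The main point requiring care — and the place I expect to have to be most careful — is the case $\lambda=\overline\mu$, where the factor $\lambda-\overline\mu$ vanishes and the na\"ive formula $g=(f_\lambda-f_{\overline\mu})/(\lambda-\overline\mu)$ is meaningless: here one must check separately that both sides reduce to the zero relation on the appropriate domain, i.e. that $\cN(\lambda)-\cN(\lambda)$ and $0\cdot\gamma_\cN(\mu)^\prime\gamma_\cN(\lambda)$ agree as linear relations (both equal $\{\{\psi,0\}:\psi\in\dom\cN(\lambda)\}$), using that $\dom(\cN(\lambda)-\cN(\lambda))=\dom\cN(\lambda)$ and similarly for the composition on the right. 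A secondary subtlety is bookkeeping with linear-relation arithmetic (the sum and product of relations, and the precise meaning of multiplication by the possibly-zero scalar $\lambda-\overline\mu$), for which I would invoke the conventions recalled in Appendix~\ref{Sappendix}; in particular one should note that the domain of the right-hand side is $\dom\gamma_\cN(\lambda)=\dom\cN(\lambda)$, matching the domain of the left-hand side, and that the multivalued parts match as well (they are both $\mul\cN(\lambda)=\ker(A_N-\lambda)$ when $\lambda\neq\overline\mu$), so the two relations genuinely coincide and not merely agree on a common domain.
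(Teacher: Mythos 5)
Your treatment of the main case $\lambda\neq\overline\mu$ is correct and is essentially identical to the paper's own proof: both inclusions are obtained by passing between a pair $f_\lambda,f_{\overline\mu}$ of solutions with common Neumann trace $\psi$ and the element $g=(f_\lambda-f_{\overline\mu})/(\lambda-\overline\mu)\in\dom A_N$ satisfying $(A_N-\overline\mu)g=f_\lambda$, together with the description of $\gamma_\cN(\mu)^\prime$ from Lemma~\ref{gammaadj}. Your closing remarks about the domains and multivalued parts of the two sides matching are imprecise (e.g.\ $\mul\bigl(\cN(\lambda)-\cN(\overline\mu)\bigr)=\mul\cN(\lambda)+\mul\cN(\overline\mu)$ rather than $\mul\cN(\lambda)$, and $\mul\cN(\lambda)$ is $\{f\vert_\cC:f\in\ker(A_N-\lambda)\}$, not $\ker(A_N-\lambda)$ itself), but these remarks are redundant: once both set inclusions are established the relations coincide, so nothing is lost.

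The one place where your proposal actually goes wrong is the degenerate case $\lambda=\overline\mu$ that you single out for special care. You claim both sides reduce to $\{\{\psi,0\}:\psi\in\dom\cN(\lambda)\}$. For the left-hand side this is false whenever $\lambda\in\sigma_p(A_N)$: by the definition of the sum of linear relations, for fixed first component $\psi$ the admissible second components of elements of $\cN(\lambda)$ form a coset of $\mul\cN(\lambda)$, so $\cN(\lambda)-\cN(\lambda)=\dom\cN(\lambda)\times\mul\cN(\lambda)$, and $\mul\cN(\lambda)\neq\{0\}$ exactly when $\lambda\in\sigma_p(A_N)$ by Lemma~\ref{mullem}. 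Whether the right-hand side $0\cdot\gamma_\cN(\overline\lambda)^\prime\gamma_\cN(\lambda)$ carries the same multivalued part depends on a convention for scalar multiples of relations that the paper never fixes; under the usual convention $cS=\{\{g,ch\}:\{g,h\}\in S\}$ it does not, and the two sides then fail to agree. The authors sidestep this entirely by writing ``we may assume $\lambda\neq\overline\mu$'' without further comment, so you are not worse off than the paper --- but your purported verification of the degenerate case is incorrect as written and should either be dropped or replaced by an honest computation of both sides under an explicitly stated convention.
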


\begin{proof}
Only the assertion \eqref{enmrel} will be verified.
The proof of \eqref{ednrel} is similar.
We may assume that $\lambda \neq \overline \mu$.
We show the inclusion $\subset$ in \eqref{enmrel} first.
Let $\{\varphi,\psi\}\in\cN(\lambda)-\cN(\overline\mu)$, that is,
there are $f_\lambda, g_{\overline\mu}\in H^1(\Omega)$ such that 
$\cL f_\lambda=\lambda f_\lambda$, $\cL g_{\overline\mu}=\overline\mu g_{\overline\mu}$,
\[
\varphi=\partial_\nu f_\lambda\vert_\cC
=\partial_\nu g_{\overline\mu}\vert_\cC\quad\text{and}\quad\psi
=f_\lambda\vert_\cC- g_{\overline\mu}\vert_\cC.
\]
In particular,
$\{\varphi,f_\lambda\}\in\gamma_\cN(\lambda)$.
Observe
that
\[
 h:=\frac{f_\lambda -g_{\overline\mu}}{\lambda-\overline\mu}\in\dom A_N
\quad\text{and}\quad (A_N-\overline\mu)h=f_\lambda.
\]
One concludes from Lemma~\ref{gammaadj} that  $\{f_\lambda, h\vert_\cC\}=\{(A_N-\overline\mu)h, h\vert_\cC\}\in\gamma_\cN(\mu)^\prime$
and therefore we have $\{\varphi,h\vert_\cC\}\in\gamma_\cN(\mu)^\prime\gamma_\cN(\lambda)$.
As $(\lambda-\overline\mu)h\vert_\cC=f_\lambda\vert_\cC- g_{\overline\mu}\vert_\cC=\psi$ we obtain
\[
\{ \varphi, \psi \}=
\bigl\{ \varphi,  (\lambda-\overline\mu)h\vert_\cC\bigr\}\in(\lambda-\overline\mu)\gamma_\cN(\mu)^\prime\gamma_\cN(\lambda).
\]
This proves the inclusion $\subset$ in \eqref{enmrel}.

Let us now verify the inclusion $\supset$ in \eqref{enmrel}.
Assume that $\{\varphi,\psi\}\in(\lambda-\overline\mu)\gamma_\cN(\mu)^\prime\gamma_\cN(\lambda)$.
Then there exists an $f_\lambda\in H^1(\Omega)$ such that 
$\cL f_\lambda=\lambda f_\lambda$,
$\varphi=\partial_\nu f_\lambda\vert_\cC$, 
$\{\varphi,f_\lambda\}\in\gamma_\cN(\lambda)$
and 
$$\bigl\{f_\lambda , (\lambda-\overline\mu)^{-1} \psi \bigr\}\in\gamma_\cN(\mu)^\prime.$$
In particular, as $f_\lambda\in\dom \gamma_\cN(\mu)^\prime$ there exists an $h\in\dom A_N$ such that 
$$f_\lambda=(A_N-\overline\mu)h\quad\text{and}\quad h\vert_\cC= (\lambda-\overline\mu)^{-1} \psi;$$
cf.\ Lemma~\ref{gammaadj}.
Define $g_{\overline\mu}:=f_\lambda-(\lambda-\overline\mu)h$.
Then $\partial_\nu g_{\overline\mu}\vert_\cC=\partial_\nu f_\lambda\vert_\cC=\varphi$ and
$$(\cL-\overline\mu)g_{\overline\mu}=(\cL -\overline\mu)\bigl(f_\lambda-(\lambda-\overline\mu)h\bigr)=(\lambda-\overline\mu)f_\lambda-(\lambda-\overline\mu)(A_N-\overline\mu)h=0.$$
Moreover, we have $f_\lambda\vert_\cC-g_{\overline\mu}\vert_\cC=(\lambda-\overline\mu)h\vert_\cC=\psi$ and therefore
\[
\{
 \varphi , \psi \}=
 \bigl\{
 \varphi , f_\lambda\vert_\cC-g_{\overline\mu}\vert_\cC\bigr\}
=
\bigl\{
 \partial_\nu f_\lambda\vert_\cC , f_\lambda\vert_\cC \bigr\}
- \bigl\{
   \partial_\nu g_{\overline\mu}\vert_\cC, g_{\overline\mu}\vert_\cC
  \bigr\}\in\cN(\lambda)-\cN(\overline\mu)
\]
as required
\end{proof}

The following corollary is a consequence of Theorem~\ref{dnthm}, Lemma~\ref{gammalambdamu} 
and the fact that 
\[
\dom\bigl(\cD(\lambda)-\cD(\overline\mu)\bigr)=\dom\gamma_\cD(\lambda)\cap\dom\gamma_\cD(\mu)
\]
for all $\mu\in\sigma_p(A_D)$.
Similarly, 
\[
\dom\bigl(\cN(\lambda)-\cN(\overline\mu)\bigr)=\dom\gamma_\cN(\lambda)\cap\dom\gamma_\cN(\mu)
\]
for all $\mu\in\sigma_p(A_N)$.

\begin{corollary}\label{dncor}
Let $\lambda,\mu\in\dC$. Then
\[
 \cD(\lambda)-\cD(\overline\mu)
=(\overline\mu-\lambda)\gamma_\cD(\mu)^\prime\bigl(I+(\lambda-\mu)(A_D-\lambda)^{-1}\bigr)\gamma_\cD(\mu)
\]
and
\[
 \cN(\lambda)-\cN(\overline\mu)
=(\lambda-\overline\mu)\gamma_\cN(\mu)^\prime\bigl(I+(\lambda-\mu)(A_N-\lambda)^{-1}\bigr)\gamma_\cN(\mu).
\]
\end{corollary}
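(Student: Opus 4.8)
The plan is to read both formulas off from Theorem~\ref{dnthm} and Lemma~\ref{gammalambdamu}; by the symmetry between $\cD$ and $\cN$ I would only treat the $\cN$-identity, the $\cD$-identity following verbatim from \eqref{ednrel} and the first identity in Lemma~\ref{gammalambdamu}. As in the proof of Theorem~\ref{dnthm} I would first reduce to the case $\lambda\ne\overline\mu$. By \eqref{enmrel} we have $\cN(\lambda)-\cN(\overline\mu)=(\lambda-\overline\mu)\,\gamma_\cN(\mu)^\prime\gamma_\cN(\lambda)$, so the assertion is equivalent to the purely relation-theoretic identity
\[
\gamma_\cN(\mu)^\prime\,\gamma_\cN(\lambda)=\gamma_\cN(\mu)^\prime\bigl(I+(\lambda-\mu)(A_N-\lambda)^{-1}\bigr)\gamma_\cN(\mu),
\]
and, by Lemma~\ref{gammalambdamu}, the composition on the right is exactly $\gamma_\cN(\mu)^\prime\bigl(\gamma_\cN(\lambda)\cap(\dom\gamma_\cN(\mu)\times L^2(\Omega))\bigr)$. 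Thus everything comes down to showing that intersecting $\gamma_\cN(\lambda)$ with $\dom\gamma_\cN(\mu)\times L^2(\Omega)$ does not change the left composition, i.e.
\[
\gamma_\cN(\mu)^\prime\,\gamma_\cN(\lambda)=\gamma_\cN(\mu)^\prime\bigl(\gamma_\cN(\lambda)\cap(\dom\gamma_\cN(\mu)\times L^2(\Omega))\bigr).
\]

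The inclusion $\supset$ is immediate. For $\subset$ I would take $\{\varphi,\psi\}$ in the left-hand side, choose $f$ (automatically in $L^2(\Omega)$) with $\{\varphi,f\}\in\gamma_\cN(\lambda)$ and $\{f,\psi\}\in\gamma_\cN(\mu)^\prime$, and reduce to proving $\varphi\in\dom\gamma_\cN(\mu)$: for then $\{\varphi,f\}\in\gamma_\cN(\lambda)\cap(\dom\gamma_\cN(\mu)\times L^2(\Omega))$, and composing once more with $\{f,\psi\}\in\gamma_\cN(\mu)^\prime$ puts $\{\varphi,\psi\}$ in the right-hand side. To get $\varphi\in\dom\gamma_\cN(\mu)$ I would use Theorem~\ref{dnthm} a second time: since $\lambda\ne\overline\mu$, $\{\varphi,\psi\}\in\gamma_\cN(\mu)^\prime\gamma_\cN(\lambda)$ together with \eqref{enmrel} yields $\{\varphi,(\lambda-\overline\mu)\psi\}\in\cN(\lambda)-\cN(\overline\mu)$, hence $\varphi\in\dom\cN(\lambda)\cap\dom\cN(\overline\mu)=\dom\gamma_\cN(\lambda)\cap\dom\gamma_\cN(\overline\mu)$ by \eqref{edomdom2}; and $\dom\gamma_\cN(\overline\mu)=\dom\gamma_\cN(\mu)$, trivially when $\mu$ is real, and because $\ker(A_N-\mu)=\ker(A_N-\overline\mu)=\{0\}$ forces both domains to be $H^{-1/2}(\cC)$ (Proposition~\ref{domdomprop}) when $\mu\notin\dR$. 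Alternatively, $\varphi\in\dom\gamma_\cN(\mu)$ can be verified directly: by Lemma~\ref{gammaadj} the relation $\{f,\psi\}\in\gamma_\cN(\mu)^\prime$ means $f=(A_N-\overline\mu)g$ for some $g\in\dom A_N$, and then $f-(\lambda-\overline\mu)g\in H^1(\Omega)$ solves $(\cL-\overline\mu)u=0$ with Neumann trace $\partial_\nu f\vert_\cC=\varphi$, so $\varphi\in\dom\gamma_\cN(\overline\mu)=\dom\gamma_\cN(\mu)$. Either way, multiplying the resulting relation identity by $(\lambda-\overline\mu)$ and feeding it back into \eqref{enmrel} gives the $\cN$-formula, and the same scheme with \eqref{ednrel} and $\dom\gamma_\cD(\overline\mu)=\dom\gamma_\cD(\mu)$ gives the $\cD$-formula.

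I do not anticipate a serious obstacle: the whole argument is algebraic once Theorem~\ref{dnthm} and Lemma~\ref{gammalambdamu} are in hand. The one place calling for a little care is the bookkeeping between $\mu$ and $\overline\mu$ — in particular that $\gamma_\cN(\mu)^\prime$ is governed by $A_N-\overline\mu$ via Lemma~\ref{gammaadj}, while $\dom\gamma_\cN$ is insensitive to conjugating its argument, so that $\dom\gamma_\cN(\overline\mu)=\dom\gamma_\cN(\mu)$ — and the routine disposal of the degenerate case $\lambda=\overline\mu$ at the outset, exactly as in Theorem~\ref{dnthm}.
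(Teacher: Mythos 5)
Your proposal is correct and takes essentially the same route the paper indicates: combine Theorem~\ref{dnthm} with Lemma~\ref{gammalambdamu} and check that replacing $\gamma_\cN(\lambda)$ by $\gamma_\cN(\lambda)\cap\bigl(\dom\gamma_\cN(\mu)\times L^2(\Omega)\bigr)$ does not alter the composition with $\gamma_\cN(\mu)^\prime$, because $\dom\bigl(\gamma_\cN(\mu)^\prime\gamma_\cN(\lambda)\bigr)\subset\dom\gamma_\cN(\mu)$. Your direct verification of this domain inclusion via Lemma~\ref{gammaadj}, together with the observation that $\dom\gamma_\cN(\overline\mu)=\dom\gamma_\cN(\mu)$, is precisely the ``fact'' on domains that the paper cites without proof before the corollary.
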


Recall that $\cD(\lambda) \colon H^{1/2}(\cC)\rightarrow H^{-1/2}(\cC)$ and 
$\cN(\lambda) \colon H^{-1/2}(\cC)\rightarrow H^{1/2}(\cC)$ are 
bounded operators for all $\lambda\in\rho(A_D)$ and $\lambda\in\rho(A_N)$, respectively. 
It follows from Corollary~\ref{dncor} that the functions
$\lambda\mapsto \cD(\lambda)$ and $\lambda\mapsto \cN(\lambda)$ are analytic on $\rho(A_D)$ and $\rho(A_N)$, respectively. In the next proposition
we show that under appropriate assumptions this extends also to points in  $\sigma_p(A_D)$ and $\sigma_p(A_N)$.

\begin{proposition}
Let $\lambda_0 \in \dC$. 
Then one has the following.
\begin{itemize}
 \item [{\rm (i)}] For all $\varphi \in \dom \cD(\lambda_0)$ the map $\lambda \mapsto (\cD(\lambda)\varphi,\varphi)_{H^{-1/2}(\cC) \times H^{1/2}(\cC)}$ is 
differentiable at $\lambda_0$ and 
\[
 \frac{d}{d\lambda} (\cD(\lambda)\varphi,\varphi)_{H^{-1/2}(\cC) \times H^{1/2}(\cC)}\Bigl|_{\lambda=\lambda_0}
=-\|f_{\lambda_0}\|_{L^2(\Omega)}^2,
\]
where $f_{\lambda_0} \in (\ker (A_D - \lambda_0))^\perp$ is the unique element such that 
$ \{ \varphi,f_{\lambda_0} \} \in \gamma_\cD(\lambda_0)$.
\item [{\rm (ii)}] For all $\varphi \in \dom \cN(\lambda_0)$ the map
$\lambda \mapsto (\cN(\lambda)\phi,\phi)_{H^{1/2}(\cC) \times H^{-1/2}(\cC)}$ is 
differentiable at $\lambda_0$ and 
\[
 \frac{d}{d\lambda} (\cN(\lambda)\phi,\phi)_{H^{1/2}(\cC) \times H^{-1/2}(\cC)}\Bigl|_{\lambda=\lambda_0}
= \|f_{\lambda_0}\|_{L^2(\Omega)}^2,
\]
where $f_{\lambda_0} \in (\ker (A_N - \lambda_0))^\perp$ is the unique element such that 
$ \{ \varphi,f_{\lambda_0} \} \in \gamma_\cN(\lambda_0)$.
\end{itemize}
\end{proposition}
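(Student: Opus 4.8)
The two assertions are symmetric, so I would prove (ii) in detail and remark that (i) follows by the same argument with the roles of $A_D$ and $A_N$ interchanged (and a sign change, reflecting that $\cD(\lambda) = \cN(\lambda)^{-1}$). The idea is to exploit Corollary~\ref{dncor}, which expresses $\cN(\lambda) - \cN(\overline\mu)$ as a product involving the resolvent $(A_N - \lambda)^{-1}$, and then let $\lambda \to \lambda_0$ with $\mu = \lambda_0$ fixed.

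First I would fix $\varphi \in \dom\cN(\lambda_0)$ and let $f_{\lambda_0}$ be the unique element of $(\ker(A_N - \lambda_0))^\perp = \ran(A_N - \overline{\lambda_0}) = \overline{\ran(A_N-\lambda_0)}$ with $\{\varphi, f_{\lambda_0}\} \in \gamma_\cN(\lambda_0)$; uniqueness is exactly $\ker\gamma_\cN(\lambda_0) = \{0\}$ together with the decomposition $L^2(\Omega) = \ker(A_N-\lambda_0) \oplus \ran(A_N-\overline{\lambda_0})$, since any two preimages differ by an element of $\mul\gamma_\cN(\lambda_0) = \ker(A_N-\lambda_0)$. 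Next, applying Corollary~\ref{dncor} with $\mu = \lambda_0$ (so $\overline\mu = \overline{\lambda_0}$; I will want $\lambda_0 \in \dR$ to identify the difference quotient cleanly, or else pair against the appropriate conjugate — the statement is most natural for $\lambda_0 \in \dR$, and for $\lambda_0 \notin \dR$ one has $\lambda_0 \in \rho(A_N)$ by self-adjointness of $A_N$ and the classical analyticity already established), I get, for $\lambda$ near $\lambda_0$,
\[
\cN(\lambda)\varphi - \cN(\lambda_0)\varphi = (\lambda - \lambda_0)\,\gamma_\cN(\lambda_0)'\bigl(I + (\lambda - \lambda_0)(A_N - \lambda)^{-1}\bigr)\gamma_\cN(\lambda_0)\varphi.
\]
Since $\gamma_\cN(\lambda_0)\varphi = f_{\lambda_0} + \ker(A_N-\lambda_0)$ as a coset and $\gamma_\cN(\lambda_0)'$ kills the multivalued part (its domain lies in $(\ker(A_N-\lambda_0))^\perp$, cf.\ the proof of Lemma~\ref{gammaadj}), the right-hand side equals $(\lambda-\lambda_0)\gamma_\cN(\lambda_0)'\bigl(f_{\lambda_0} + (\lambda-\lambda_0)(A_N-\lambda)^{-1}f_{\lambda_0}\bigr)$, which is genuinely a single element of $H^{1/2}(\cC)$.

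Then I would pair with $\varphi$ in the $H^{1/2}(\cC)\times H^{-1/2}(\cC)$ duality, divide by $\lambda - \lambda_0$, and use the explicit formula from Lemma~\ref{gammaadj}: for $g \in \dom A_N$ one has $\gamma_\cN(\lambda_0)'\bigl((A_N - \overline{\lambda_0})g\bigr) = g\vert_\cC$, and by the adjoint relation $(\gamma_\cN(\lambda_0)' k, \varphi)_{H^{1/2}\times H^{-1/2}} = (f_{\lambda_0}, k)_{L^2(\Omega)}$ whenever $\{\varphi, f_{\lambda_0}\} \in \gamma_\cN(\lambda_0)$ and $k \in \dom\gamma_\cN(\lambda_0)'$. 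Applying this with $k = f_{\lambda_0}$ gives the leading term $(f_{\lambda_0}, f_{\lambda_0})_{L^2(\Omega)} = \|f_{\lambda_0}\|^2$, and with $k = (\lambda - \lambda_0)(A_N-\lambda)^{-1}f_{\lambda_0}$ gives a term of size $O(|\lambda - \lambda_0|)\cdot\|(A_N-\lambda)^{-1}f_{\lambda_0}\|\,\|f_{\lambda_0}\|$; one must check $f_{\lambda_0} \in \ran(A_N-\overline{\lambda_0})$ so that this is legitimate and the resolvent is applied within the appropriate reducing subspace, and that $\|(A_N-\lambda)^{-1}f_{\lambda_0}\|$ stays bounded as $\lambda \to \lambda_0$ — this is true precisely because $f_{\lambda_0} \perp \ker(A_N - \lambda_0)$, so the singularity of the resolvent at $\lambda_0$ does not see $f_{\lambda_0}$. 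Hence the difference quotient tends to $\|f_{\lambda_0}\|_{L^2(\Omega)}^2$, which is assertion~(ii).

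**Main obstacle.** The delicate point is the behaviour of $(A_N - \lambda)^{-1}$ near $\lambda_0 \in \sigma_p(A_N)$: the resolvent blows up, and the argument only works because the spectral projection onto $\ker(A_N-\lambda_0)$ annihilates $f_{\lambda_0}$, so that $\lambda \mapsto (A_N-\lambda)^{-1}f_{\lambda_0}$ extends analytically across $\lambda_0$ (equivalently, $\|(A_N-\lambda)^{-1}(I - P_{\ker(A_N-\lambda_0)})\|$ is bounded near $\lambda_0$). Making this precise — including the fact that the coset representative $f_{\lambda_0}$ really lies in the orthogonal complement, and that the pairing $(\gamma_\cN(\lambda_0)'k, \varphi)$ is well defined and continuous for such $k$ — is the part requiring care; everything else is the algebra of linear relations already set up in Lemmas~\ref{gammalambdamu}--\ref{gammaadj} and Corollary~\ref{dncor}.
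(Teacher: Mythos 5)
Your proposal is correct and follows essentially the same route as the paper: both reduce the difference quotient, via the Krein-type difference formula, to the inner product $\bigl(f_{\lambda_0},\,f_{\lambda_0}+(\lambda-\lambda_0)(A_N-\lambda)^{-1}f_{\lambda_0}\bigr)_{L^2(\Omega)}$ and then use that $f_{\lambda_0}\perp\ker(A_N-\lambda_0)$ to kill the resolvent term in the limit (the paper does this by an eigenfunction expansion). The only cosmetic difference is that you invoke Corollary~\ref{dncor} with $\mu=\lambda_0$ directly, while the paper applies Theorem~\ref{dnthm} with the adjoint at the varying point and then Lemma~\ref{gammalambdamu}; since Corollary~\ref{dncor} is precisely the composition of those two results, this is the same argument.
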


\begin{proof}
We show Statement (ii).
Let $\lambda_0 \in \dC$, $\varphi \in \dom \cN(\lambda_0)$, and
$P$ be the orthogonal projection in $L^2(\Omega)$ onto 
$(\ker (A_N - \lambda_0))^\perp$.
There exists a unique $f_{\lambda_0} \in H^1(\Omega)$ such that 
$P f_{\lambda_0} = f_{\lambda_0}$, $\cL f_{\lambda_0} = \lambda_0 f_{\lambda_0}$ and 
$ \{ \varphi, f_{\lambda_0}|_\cC \} \in \cN(\lambda_0)$.
Let $\lambda \in \dC \setminus \{ \lambda_0 \} $ and suppose that $|\lambda - \lambda_0|$ 
is small.
Then $\lambda \in \rho(A_N)$ and 
\[
\left\{ \varphi, \frac{1}{\lambda - \lambda_0} (\cN(\lambda) \varphi - f_{\lambda_0}|_\cC) \right\}
\in \frac{1}{\lambda - \lambda_0} \bigl(\cN(\lambda) - \cN(\lambda_0) \bigr)
= \gamma_\cN(\overline \lambda)' \gamma_\cN(\lambda_0)
\]
by Theorem~\ref{dnthm}.
Moreover, $ \{ \varphi, f_{\lambda_0} \} \in \gamma_\cN(\lambda_0)$.
The proof of Theorem~\ref{dnthm} gives that 
\[
\left\{ f_{\lambda_0}, \frac{1}{\lambda - \lambda_0} (\cN(\lambda) \varphi - f_{\lambda_0}|_\cC) \right\}
\in \gamma_\cN(\overline \lambda)'.  
\]
By definition of the adjoint one has 
\begin{equation}\label{elemma1;1}
\begin{split}
\left( \frac{1}{\lambda - \lambda_0} (\cN(\lambda) \varphi - f_{\lambda_0}|_\cC), \varphi\right)_{H^{1/2}(\cC) \times H^{-1/2}(\cC)}
&= (f_{\lambda_0}, \gamma_\cN(\lambda) \varphi)_{L^2(\Omega)}\\
&= (f_{\lambda_0}, P \gamma_\cN(\lambda) \varphi)_{L^2(\Omega)}
\end{split}
\end{equation}
for all $ \{ \varphi, \gamma_\cN(\lambda) \varphi \} \in \gamma_\cN(\lambda)$.
Since $\varphi \in \dom \gamma_\cN(\lambda_0)$ it follows from
Lemma~\ref{gammalambdamu} that 
\[
P \gamma_\cN(\lambda) \varphi
= f_{\lambda_0} + (\lambda - \lambda_0) (A_N - \lambda)^{-1} f_{\lambda_0} .
\]
Let $(e_k)_{k \in \dN}$ be an orthonormal basis in $L^2(\Omega)$
of eigenfunctions for $A_N$.
Suppose that $A_N e_k = \mu_k e_k$ for all $k \in \dN$.
Then 
\[
(\lambda - \lambda_0) (A_N - \lambda)^{-1} f_{\lambda_0}
= \sum_{\scriptstyle k \in \dN \atop
        \scriptstyle \mu_k \neq \lambda_0}
    \frac{\lambda - \lambda_0}{\mu_k - \lambda} (f_{\lambda_0}, e_k)_{L^2(\Omega)} e_k .
\]
So $\lim_{\lambda \to \lambda_0} (\lambda - \lambda_0) (A_N - \lambda)^{-1} f_{\lambda_0} = 0$
in $L^2(\Omega)$
and it follows from (\ref{elemma1;1}) that 
\[
\lim_{\lambda \to \lambda_0} 
\left( \frac{1}{\lambda - \lambda_0} (\cN(\lambda) \varphi - f_{\lambda_0}|_\cC), \varphi\right)_{H^{1/2}(\cC) \times H^{-1/2}(\cC)}
= \|f_{\lambda_0}\|_{L^2(\Omega)}^2 .  \]
Hence $\lambda \mapsto (\cN(\lambda)\phi,\phi)_{H^{1/2}(\cC) \times H^{-1/2}(\cC)}$ is 
differentiable at $\lambda_0$ with derivative 
$\|f_{\lambda_0}\|_{L^2(\Omega)}^2$.
\end{proof}

In the next theorem we show how $A_D$ and $A_N$ are related to each other in a Krein type resolvent formula.
For the case that $\lambda\in\dC$ belongs to
the resolvent set of both operators $A_D$ and $A_N$ such formulae are well known and can be 
found in e.g.\ \cite{AlpayB,BehL1,BehL2,BGW,GesM2,GesM3,Mal,PR,Post}.
However, our aim is to show that the correspondence between $(A_D-\lambda)^{-1}$ and 
$(A_N-\lambda)^{-1}$ in terms of $\gamma_\cD(\lambda)$, $\gamma_\cN(\lambda)$,
and the Dirichlet-to-Neumann map $\cD(\lambda)$ and Neumann-to-Dirichlet map $\cN(\lambda)$ is also valid if $\lambda$ is an eigenvalue 
of one or both of the operators $A_D$ and $A_N$.

\begin{theorem}\label{resdiffthm}
If $\lambda\in\dC$ then
\[
  (A_N-\lambda)^{-1}-(A_D-\lambda)^{-1}=\gamma_\cD(\lambda)\cN(\lambda)\gamma_\cD(\overline\lambda)^\prime
=\gamma_\cN(\lambda)\cD(\lambda)\gamma_\cN(\overline\lambda)^\prime.
\]
\end{theorem}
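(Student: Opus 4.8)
The plan is to verify the two identities
\[
(A_N-\lambda)^{-1}-(A_D-\lambda)^{-1}=\gamma_\cD(\lambda)\,\cN(\lambda)\,\gamma_\cD(\overline\lambda)^\prime
\qquad\text{and}\qquad
(A_N-\lambda)^{-1}-(A_D-\lambda)^{-1}=\gamma_\cN(\lambda)\,\cD(\lambda)\,\gamma_\cN(\overline\lambda)^\prime
\]
separately as equalities of linear relations in $L^2(\Omega)\times L^2(\Omega)$, using double inclusion. I will work out the second identity in detail; the first is completely analogous with the roles of Dirichlet and Neumann interchanged. First I would record the explicit descriptions of the three factors on the right: by Lemma~\ref{gammaadj}, $\gamma_\cN(\overline\lambda)^\prime=\{\{(A_N-\lambda)g,g\vert_\cC\}:g\in\dom A_N\}$; the relation $\cD(\lambda)$ consists of pairs $\{f_\lambda\vert_\cC,\partial_\nu f_\lambda\vert_\cC\}$ with $f_\lambda\in H^1(\Omega)$, $\cL f_\lambda=\lambda f_\lambda$; and $\gamma_\cN(\lambda)=\{\{\partial_\nu f_\lambda\vert_\cC,f_\lambda\}:f_\lambda\in H^1(\Omega),\ \cL f_\lambda=\lambda f_\lambda\}$. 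Composing these, an element of $\gamma_\cN(\lambda)\,\cD(\lambda)\,\gamma_\cN(\overline\lambda)^\prime$ is a pair $\{(A_N-\lambda)g,\ f_\lambda\}$ where $g\in\dom A_N$, $f_\lambda\in H^1(\Omega)$ solves $\cL f_\lambda=\lambda f_\lambda$, and the matching conditions $g\vert_\cC=f_\lambda\vert_\cC$ (from $\cD(\lambda)$ applied to $g\vert_\cC$) and the intermediate value $\partial_\nu f_\lambda\vert_\cC$ is produced by $\cD(\lambda)$ — i.e. $g\vert_\cC\in\dom\cD(\lambda)$ and $\{g\vert_\cC,\partial_\nu f_\lambda\vert_\cC\}\in\cD(\lambda)$.

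For the inclusion $\supset$, given such $g$ and $f_\lambda$, set $h:=g-f_\lambda$. Then $h\in H^1(\Omega)$, $h\vert_\cC=g\vert_\cC-f_\lambda\vert_\cC=0$, so $h\in H^1_0(\Omega)$, and $\cL h=\cL g-\lambda f_\lambda=(A_N-\lambda)g$ (using $\cL g=A_N g$ since $g\in\dom A_N$, together with $\cL g-\lambda g=(A_N-\lambda)g$ and $\lambda g=\lambda f_\lambda+\lambda h$ — one must be slightly careful here; the clean computation is $(\cL-\lambda)h=(\cL-\lambda)g-(\cL-\lambda)f_\lambda=(A_N-\lambda)g-0$). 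Since $\cL h\in L^2(\Omega)$ and $h\in H^1_0(\Omega)$, we get $h\in\dom A_D$ and $(A_D-\lambda)h=(A_N-\lambda)g$, hence $h=(A_D-\lambda)^{-1}(A_N-\lambda)g$ whenever this makes sense as a relation; more robustly, $\{(A_N-\lambda)g,h\}\in(A_D-\lambda)^{-1}$. Also $\{(A_N-\lambda)g,g\}\in(A_N-\lambda)^{-1}$. Therefore
\[
\{(A_N-\lambda)g,\ f_\lambda\}=\{(A_N-\lambda)g,\ g-h\}=\{(A_N-\lambda)g,g\}-\{(A_N-\lambda)g,h\}\in(A_N-\lambda)^{-1}-(A_D-\lambda)^{-1},
\]
which gives $\supset$.

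For the reverse inclusion $\subset$, take $\{u,w\}\in(A_N-\lambda)^{-1}-(A_D-\lambda)^{-1}$, so there exist $g\in\dom A_N$ and $h\in\dom A_D$ with $u=(A_N-\lambda)g=(A_D-\lambda)h$ and $w=g-h$. Put $f_\lambda:=g-h$; then $f_\lambda\in H^1(\Omega)$ and $(\cL-\lambda)f_\lambda=(A_N-\lambda)g-(A_D-\lambda)h=0$, so $\cL f_\lambda=\lambda f_\lambda$. Since $h\in H^1_0(\Omega)$ we have $h\vert_\cC=0$, hence $f_\lambda\vert_\cC=g\vert_\cC$; and since $\partial_\nu h\vert_\cC$ is the normal trace of an element of $\dom A_D$ — here I will invoke, as in the proof of Lemma~\ref{gammaadj}, Green's identity to see that $g\vert_\cC=f_\lambda\vert_\cC\in\dom\cD(\lambda)$ and that $\partial_\nu f_\lambda\vert_\cC$ is the corresponding Neumann value, i.e. $\{f_\lambda\vert_\cC,\partial_\nu f_\lambda\vert_\cC\}\in\cD(\lambda)$ with $f_\lambda\vert_\cC=g\vert_\cC$. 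Then reading the chain backwards, $\{u,g\vert_\cC\}=\{(A_N-\lambda)g,g\vert_\cC\}\in\gamma_\cN(\overline\lambda)^\prime$, $\{g\vert_\cC,\partial_\nu f_\lambda\vert_\cC\}\in\cD(\lambda)$, and $\{\partial_\nu f_\lambda\vert_\cC,f_\lambda\}\in\gamma_\cN(\lambda)$, so $\{u,f_\lambda\}=\{u,w\}$ lies in the composition. This closes the argument for the second identity, and the first follows by the symmetric argument (swap $A_D\leftrightarrow A_N$, $H^{1/2}\leftrightarrow H^{-1/2}$, $\cN\leftrightarrow\cD$, $\gamma_\cN\leftrightarrow\gamma_\cD$, using the fact that $A_N$-functions have zero Neumann trace plays the role of $A_D$-functions having zero Dirichlet trace).

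The main obstacle I anticipate is purely bookkeeping about the domain and multivalued-part conditions when $\lambda\in\sigma_p(A_D)\cup\sigma_p(A_N)$: there $(A_D-\lambda)^{-1}$ and $(A_N-\lambda)^{-1}$ are genuine multivalued relations, $\cD(\lambda)$ and $\cN(\lambda)$ need not be operators, and the subtraction of relations must be handled strictly by the graph-theoretic definition rather than by pointwise formulas. In particular one has to check that the decomposition $f_\lambda=g-h$ respects the multivalued parts — e.g. that adding an element of $\ker(A_N-\lambda)$ to $g$ (which does not change $(A_N-\lambda)g$) is matched on the other side by the freedom in $\cN(\lambda)$ and $\gamma_\cN(\lambda)$. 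This is exactly the kind of compatibility that Lemma~\ref{gammaadj} and Proposition~\ref{domdomprop} were set up to guarantee, so the resolution is to quote those results at the appropriate places rather than to argue from scratch; no elliptic regularity or compactness is needed, only Green's identity and the linear-relation calculus.
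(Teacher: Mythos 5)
Your proof is correct and follows essentially the same route as the paper's: both inclusions rest on the decomposition $f_\lambda=g-h$ with $g\in\dom A_N$, $h\in\dom A_D$ and $(A_N-\lambda)g=(A_D-\lambda)h$, together with the description of $\gamma_\cN(\overline\lambda)^\prime$ from Lemma~\ref{gammaadj} (your version is in fact slightly leaner, since exhibiting $g-h$ as an explicit solution makes the paper's separate Green's-identity verification of $g\vert_\cC\in\dom\cD(\lambda)$ unnecessary). The one loose point --- that in the composition one may only assume $g\vert_\cC=f_\lambda\vert_\cC$ after adjusting $g$ by an element of $\ker(A_N-\lambda)$ when $\lambda\in\sigma_p(A_N)$, since the witnesses of the $\cD(\lambda)$ and $\gamma_\cN(\lambda)$ steps need not coincide --- is exactly the bookkeeping issue you flag at the end, and the paper resolves it the same way.
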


\begin{proof}
We verify the formula
\begin{equation}\label{eadanform}
 (A_N-\lambda)^{-1}-(A_D-\lambda)^{-1}=\gamma_\cN(\lambda)\cD(\lambda)\gamma_\cN(\overline\lambda)^\prime;
\end{equation}
the proof of the corresponding formula with $\gamma_\cD(\lambda)\cN(\lambda)\gamma_\cD(\overline\lambda)^\prime$
on the right hand side 
is very similar.

For the inclusion $\subset$ in \eqref{eadanform} let 
$h,h_N,h_D \in L^2(\Omega)$, suppose that $\{h,h_N\}\in (A_N-\lambda)^{-1}$ and $\{h,h_D\}\in (A_D-\lambda)^{-1}$, so that
$$\{h,h_N-h_D\}\in (A_N-\lambda)^{-1}-(A_D-\lambda)^{-1}.$$
Then
\begin{equation}\label{ehnh}
 (A_N-\lambda)h_N=h\qquad\text{,}\qquad (A_D-\lambda)h_D=h,
\end{equation}
and it follows from Lemma~\ref{gammaadj} that 
\begin{equation}\label{1}
\{h,h_N\vert_\cC\}\in \gamma_\cN(\overline\lambda)^\prime.
\end{equation}
Let us show that
$h_N\vert_\cC\in\dom \cD(\lambda)$. This is clear if $\lambda\not\in\sigma_p(A_D)$. Assume that $\lambda\in\sigma_p(A_D)$.
Then for all $f_\lambda\in\ker(A_D-\lambda)$ one deduces from Green's identity, $f_\lambda\vert_\cC=0$ and \eqref{ehnh} that 
\begin{eqnarray}\label{ahagut}
\lefteqn{
(h_N\vert_\cC,\partial_\nu f_\lambda\vert_\cC)_{H^{1/2}(\cC) \times H^{-1/2}(\cC)}
} \hspace{10mm} \\*
&=& (h_N\vert_\cC,\partial_\nu f_\lambda\vert_\cC)_{H^{1/2}(\cC) \times H^{-1/2}(\cC)} 
   - (\partial_\nu h_N\vert_\cC, f_\lambda\vert_\cC)_{H^{-1/2}(\cC) \times H^{1/2}(\cC)}  \nonumber\\
&=& (\cL h_N,f_\lambda)_{L^2(\Omega)}
   -(h_N,\cL f_\lambda)_{L^2(\Omega)}= ((A_N-\lambda)h_N,f_\lambda)_{L^2(\Omega)} \nonumber\\
&=& (h,f_\lambda)_{L^2(\Omega)}
=((A_D-\lambda)h_D,f_\lambda)_{L^2(\Omega)}
=(h,(A_D-\lambda)f_\lambda)_{L^2(\Omega)}=0, \nonumber
\end{eqnarray}
and hence $h_N\vert_\cC\in\dom \cD(\lambda)$ by Proposition~\ref{domdomprop}.
Thus there exists a $k_\lambda\in H^1(\Omega)$
such that $\cL k_\lambda=\lambda k_\lambda$, $\{k_\lambda\vert_\cC,\partial_\nu k_\lambda\vert_\cC \}\in\cD(\lambda)$ and $k_\lambda\vert_\cC=h_N\vert_\cC$. 
Observe that $k_\lambda:= h_N-h_D$ is a possible choice.
In fact, $h_N-h_D\in H^1(\Omega)$  as $h_N\in \dom A_N$ and $h_D\in\dom A_D$, 
and $\cL(h_N-h_D)=\lambda(h_N-h_D)$ follows from \eqref{ehnh}.
Moreover, we have $(h_N-h_D)\vert_\cC=h_N\vert_\cC$ and $\partial_\nu (h_N-h_D)\vert_\cC=- \partial_\nu h_D\vert_\cC$.
It follows that
\begin{equation}\label{2}
\bigl\{h_N\vert_\cC,-\partial_\nu h_D\vert_\cC\bigr\}=\bigl\{(h_N-h_D)\vert_\cC,\partial_\nu (h_N-h_D)\vert_\cC \bigr\}\in\cD(\lambda).
\end{equation}
Next we show that $-\partial_\nu h_D\vert_\cC\in\dom\gamma_\cN(\lambda)$. This is clear if $\lambda\not\in\sigma_p(A_D)$. Assume now that $\lambda\in\sigma_p(A_D)$
and let $g_\lambda\in\ker(A_N-\lambda)$.
Then we compute
in a similar way as in \eqref{ahagut} that
\begin{eqnarray*}
\lefteqn{
(-\partial_\nu h_D\vert_\cC,g_\lambda\vert_\cC)_{H^{-1/2}(\cC) \times H^{1/2}(\cC)}
} \hspace{10mm} \\*
&=&(h_D\vert_\cC,\partial_\nu g_\lambda\vert_\cC)_{H^{1/2}(\cC) \times H^{-1/2}(\cC)}
    -(\partial_\nu h_D\vert_\cC,g_\lambda\vert_\cC)_{H^{-1/2}(\cC) \times H^{1/2}(\cC)} \\
&=&(\cL h_D,g_\lambda)_{L^2(\Omega)}
   -(h_D,\cL g_\lambda)_{L^2(\Omega)}
=((A_D-\lambda)h_D,g_\lambda)_{L^2(\Omega)}\\
&=&(h,g_\lambda)_{L^2(\Omega)}
=((A_N-\lambda)h_N,g_\lambda)_{L^2(\Omega)}
=(h,(A_N-\lambda)g_\lambda)_{L^2(\Omega)}=0.
\end{eqnarray*}
Therefore $-\partial_\nu h_D\vert_\cC\in\dom\gamma_\cN(\lambda)$ follows from \eqref{edomdom2} and Proposition~\ref{domdomprop}.
This implies that
\begin{equation}\label{3}
 \{-\partial_\nu h_D\vert_\cC,h_N-h_D\}\in\gamma_\cN(\lambda).
\end{equation}
From \eqref{1}, \eqref{2}, and \eqref{3} we now conclude that
$\{h,h_N-h_D\}\in\gamma_\cN(\lambda)\cD(\lambda)\gamma_\cN(\overline\lambda)^\prime$
which shows the inclusion $\subset$ in \eqref{eadanform}.

We now prove the inclusion $\supset$ in \eqref{eadanform}.
Let $\{h,k_\lambda\}\in \gamma_\cN(\lambda)\cD(\lambda)\gamma_\cN(\overline\lambda)^\prime$.
Then there exists an $h_N\in\dom A_N$ such that $h=(A_N-\lambda)h_N$
and $\{h,h_N\vert_\cC\}\in \gamma_\cN(\overline\lambda)^\prime$.
Moreover, $\cL k_\lambda=\lambda k_\lambda$, $k_\lambda\in H^1(\Omega)$, $k_\lambda\vert_\cC=h_N\vert_\cC$
and $\{k_\lambda\vert_\cC,\partial_\nu k_\lambda\vert_\cC\}\in\cD(\lambda)$
and $\{\partial_\nu k_\lambda\vert_\cC,k_\lambda\}\in\gamma_\cN(\lambda)$.
It is clear that $\{h,h_N\}\in (A_N-\lambda)^{-1}$.
Let 
\begin{equation}\label{hdhn}
h_D:=h_N-k_\lambda.
\end{equation}
Then we have $h_D\in H^1(\Omega)$ and $h_D\vert_\cC=h_N\vert_\cC-k_\lambda\vert_\cC=0$.
Moreover, as 
$$(\cL-\lambda)h_D=(\cL-\lambda)(h_N-k_\lambda)=(\cL-\lambda)h_N=(A_N-\lambda)h_N=h$$
it follows that $h_D\in\dom A_D$ and $(A_D-\lambda)h_D=h$.
This implies  $\{h,h_D\}\in (A_D-\lambda)^{-1}$ and from \eqref{hdhn} we conclude that 
\begin{equation*}
\{h,k_\lambda\} =\{h,h_N-h_D\} =\{h,h_N\}-\{h,h_D\}\in (A_N-\lambda)^{-1}- (A_D-\lambda)^{-1}.
\end{equation*}
This shows the inclusion $\supset$ in \eqref{eadanform}.
Theorem~\ref{resdiffthm} is proved.
\end{proof}

\section{Dirichlet-to-Neumann and Neumann-to-Dirichlet maps in $L^2(\cC)$}\label{sec5}

In this section we consider the restrictions
\[
\begin{split}
D(\lambda)&=\bigl\{\{f_\lambda\vert_\cC,\partial_\nu f_\lambda\vert_\cC\}\in\cD(\lambda):
 f_\lambda \in H^1(\Omega), \; \cL f_\lambda = \lambda f_\lambda \mbox{ and } 
   \partial_\nu f_\lambda\vert_\cC\in L^2(\cC) \bigr\},\\
N(\lambda)&=\bigl\{\{\partial_\nu f_\lambda\vert_\cC,f_\lambda\vert_\cC\}\in\cN(\lambda):
 f_\lambda \in H^1(\Omega), \; \cL f_\lambda = \lambda f_\lambda \mbox{ and } 
   \partial_\nu f_\lambda\vert_\cC\in L^2(\cC) \bigr\},
\end{split}
\]
of the Dirichlet-to-Neumann and Neumann-to-Dirichlet map in $L^2(\cC)$. 
Since the trace $f_\lambda\vert_\cC$ of a function $f_\lambda\in H^1(\Omega)$ belongs to
$H^{1/2}(\cC)\subset L^2(\cC)$ the relations $D(\lambda)$ and $N(\lambda)$ are contained 
in $L^2(\cC)\times L^2(\cC)$. 
Clearly, 
$$
D(\lambda)=\cD(\lambda)\cap \bigl(L^2(\cC)\times L^2(\cC)\bigr)\quad\text{and}\quad N(\lambda)=\cN(\lambda)\cap \bigl(L^2(\cC)\times L^2(\cC)\bigr),
$$
and, in particular, 
$D(\lambda) \subset \cD(\lambda)$ and $N(\lambda) \subset \cN(\lambda)$.

In the next theorem the domains, kernels and multivalued parts of $D(\lambda)$
and $N(\lambda)$ are specified.
It is remarkable that $\mul D(\lambda)$ and $\ker N(\lambda)$ coincide with $\mul\cD(\lambda)$ and $\ker\cN(\lambda)$,
respectively.
These facts and the assertions on the domains below are essentially consequences of the regularity results 
\begin{equation*}
 \dom A_D\subset H^{3/2}(\Omega)\quad\text{and}\quad\dom A_N\subset H^{3/2}(\Omega)
\end{equation*}
due to Jerison and Kenig 
\cite{JK2,JK}, and Gesztesy and Mitrea \cite{GesM2,GesM3}. The following lemma is particularly useful; cf.\ \cite{GesM2}, Lemma 2.3 and Lemma 2.4.

\begin{lemma} \label{lDNmaps401}
The following assertions are valid.
\begin{itemize}
 \item [{\rm (i)}] Let $f\in H^{3/2}(\Omega)$ and suppose that 
$\cL f\in L^2(\Omega)$. Then $f\vert_\cC \in H^1(\cC)$ and
$\partial_\nu f\vert_\cC \in L^2(\cC)$.
\item [{\rm (ii)}] For all $\varphi\in H^1(\cC)$ there exists a $g\in H^{3/2}(\Omega)$ such that $\cL g\in L^2(\Omega)$ and $g\vert_\cC=\varphi$.
\item [{\rm (iii)}] For all $\psi\in L^2(\cC)$ there exists an $h\in H^{3/2}(\Omega)$ such that $\cL h\in L^2(\Omega)$ and $\partial_\nu h\vert_\cC=\psi$.
\end{itemize}
\end{lemma}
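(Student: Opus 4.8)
The lemma packages three regularity facts that should all be derived from the two embeddings $\dom A_D\subset H^{3/2}(\Omega)$ and $\dom A_N\subset H^{3/2}(\Omega)$ together with the standard trace theory recalled in Section~\ref{sec2}. Part (i) is the ``easy'' direction: it says that the trace and Neumann trace behave one derivative better than the generic $H^1(\Omega)$ estimates once we know $f\in H^{3/2}(\Omega)$ and $\cL f\in L^2(\Omega)$. Parts (ii) and (iii) are the ``surjectivity'' directions, asserting that every $\varphi\in H^1(\cC)$ and every $\psi\in L^2(\cC)$ is realised as a Dirichlet datum (resp. Neumann datum) of such an extra-regular function. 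The natural strategy for (ii)--(iii) is to solve a boundary value problem for $\cL+c$ with $c$ large, so that $A_D+c$ and $A_N+c$ are boundedly invertible, and then use the regularity embeddings to land in $H^{3/2}(\Omega)$.

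\textbf{Part (i).} First I would invoke the boundary trace theorem: the map $f\mapsto f\vert_\cC$ is bounded from $H^{s}(\Omega)$ to $H^{s-1/2}(\cC)$ for $1/2<s\le 3/2$ on a bounded Lipschitz domain, so $f\in H^{3/2}(\Omega)$ gives $f\vert_\cC\in H^1(\cC)$ immediately. For the Neumann trace, I would use the refined version of the Green identity \eqref{ederi}: for $f\in H^{3/2}(\Omega)$ with $\Delta f\in L^2(\Omega)$ one has $\nabla f\in H^{1/2}(\Omega)^n$, hence the normal component $\partial_\nu f\vert_\cC$ is the trace of an $H^{1/2}(\Omega)$-vector field, which lies in $L^2(\cC)$; since $\cL f=-\Delta f+Vf$ and $V\in L^\infty(\Omega)$, the hypothesis $\cL f\in L^2(\Omega)$ is equivalent to $\Delta f\in L^2(\Omega)$. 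This is essentially the content of \cite{GesM2}, Lemma~2.3, so I would cite it and fill in only the short trace-theoretic argument.

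\textbf{Parts (ii) and (iii).} Fix $c>\|V\|_{L^\infty(\Omega)}$ so that $A_D+c$ and $A_N+c$ are boundedly invertible selfadjoint operators in $L^2(\Omega)$ (their lower bounds are $\essinf V+c>0$, as recorded in Section~\ref{sec3}); set $\cL_c=\cL+c$. For (ii): given $\varphi\in H^1(\cC)$, pick any $\Phi\in H^{3/2}(\Omega)$ with $\Phi\vert_\cC=\varphi$ (a bounded right inverse of the trace $H^{3/2}(\Omega)\to H^1(\cC)$ exists on Lipschitz domains), let $w=(A_D+c)^{-1}(\cL_c\Phi)\in\dom A_D\subset H^{3/2}(\Omega)$, and put $g=\Phi-w$. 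Then $g\in H^{3/2}(\Omega)$, $g\vert_\cC=\varphi$ because $w\in H^1_0(\Omega)$, and $\cL g=\cL\Phi-\cL w=\cL_c\Phi-cw-(A_D w+cw)\cdot(\text{sign})$; more cleanly, $\cL_c g=\cL_c\Phi-\cL_c w=\cL_c\Phi-(A_D+c)w=0$, so $\cL g=-cg\in L^2(\Omega)$. For (iii): given $\psi\in L^2(\cC)$, pick $\Psi\in H^{3/2}(\Omega)$ with $\partial_\nu\Psi\vert_\cC=\psi$ and $\cL\Psi\in L^2(\Omega)$ — such a $\Psi$ can be taken to be a solution of a Neumann problem, or more simply one uses the known surjectivity of the Neumann trace $H^{3/2}(\Omega)\cap\{\Delta\,\cdot\in L^2\}\to L^2(\cC)$; then set $w=(A_N+c)^{-1}(\cL_c\Psi)\in\dom A_N\subset H^{3/2}(\Omega)$ and $h=\Psi-w$. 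Since $\partial_\nu w\vert_\cC=0$ one gets $\partial_\nu h\vert_\cC=\psi$, and as before $\cL_c h=0$, hence $\cL h=-ch\in L^2(\Omega)$. This reproduces \cite{GesM2}, Lemma~2.4.

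\textbf{Main obstacle.} The genuinely non-elementary inputs are exactly the two regularity embeddings $\dom A_D,\dom A_N\subset H^{3/2}(\Omega)$ on bounded Lipschitz domains, due to Jerison--Kenig and Gesztesy--Mitrea; everything else is standard trace theory plus the invertibility of $A_D+c$, $A_N+c$. I would therefore state the lemma as a citation of \cite{GesM2}, Lemma~2.3 and Lemma~2.4, and present the above argument as the short self-contained derivation, flagging that the only subtle point in part (iii) is the choice of the initial extension $\Psi$ with prescribed Neumann trace and $L^2$ Laplacian — which is itself a consequence of the same Lipschitz-domain regularity theory (one can, for instance, take $\Psi=(A_N+c)^{-1}u$ for suitable $u$, but the cleanest route is to quote the surjectivity result directly).
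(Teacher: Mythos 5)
The paper offers no proof of this lemma at all: it is quoted directly from \cite{GesM2}, Lemma~2.3 and Lemma~2.4, so your decision to ultimately cite those results matches what the authors do. The difficulty lies in the ``short self-contained derivation'' you propose to supply. In part (i) both trace claims fail on a general bounded Lipschitz domain: the trace operator $H^s(\Omega)\to H^{s-1/2}(\cC)$ is bounded only for $1/2<s<3/2$, and at the endpoint $s=3/2$ it does not map $H^{3/2}(\Omega)$ into $H^1(\cC)$; likewise elements of $H^{1/2}(\Omega)$ admit no boundary trace whatsoever, so one cannot conclude $\partial_\nu f\vert_\cC\in L^2(\cC)$ by viewing $\nabla f$ as an $H^{1/2}(\Omega)^n$ field and ``taking its trace''. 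The entire content of \cite{GesM2}, Lemma~2.3 (resting on Jerison--Kenig, Verchota, Rellich-type identities and nontangential maximal function estimates) is that the extra hypothesis $\Delta f\in L^2(\Omega)$ rescues precisely these endpoint statements; your argument uses that hypothesis only to note it is equivalent to $\cL f\in L^2(\Omega)$ and would therefore ``prove'' the endpoint trace theorems for arbitrary $H^{3/2}(\Omega)$ functions, which are false.

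A related circularity affects (ii) and (iii). For the correction $w=(A_D+c)^{-1}(\cL_c\Phi)$ to be defined you need $\cL_c\Phi\in L^2(\Omega)$, i.e.\ $\Delta\Phi\in L^2(\Omega)$; a generic right inverse of the trace landing merely in $H^{3/2}(\Omega)$ gives $\Delta\Phi\in H^{-1/2}(\Omega)$ and nothing better. So the initial extension must already solve the hard part of the problem --- produce $\Phi\in H^{3/2}(\Omega)$ with $\Delta\Phi\in L^2(\Omega)$ and prescribed Dirichlet (resp.\ Neumann) datum --- which is exactly the assertion of \cite{GesM2}, Lemma~2.4, ultimately the Jerison--Kenig solvability theory for the Dirichlet and Neumann problems with $H^1(\cC)$ resp.\ $L^2(\cC)$ data. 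You flag this for (iii), but the identical issue is present, unflagged, in (ii). The reduction-to-a-homogeneous-problem scheme is fine as bookkeeping, but the honest conclusion is that the lemma should simply be cited, as the paper does: there is no elementary trace-theoretic shortcut on Lipschitz domains.
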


\begin{theorem}\label{thmdn}
Let $\lambda\in\dC$. 
 The domains of the Dirichlet-to-Neumann map $D(\lambda)$ 
and Neumann-to-Dirichlet map $N(\lambda)$ in $L^2(\cC)$ are  
 \begin{equation}\label{eabcdef}
 \dom D(\lambda)=\bigl\{\varphi\in H^1(\cC): 
    (\varphi,\partial_\nu f_\lambda\vert_\cC)_{L^2(\cC)}=0 \mbox{ for all } 
          f_\lambda\in\ker(A_D-\lambda)\bigr\}
\end{equation}
and
\begin{equation*}
 \dom N(\lambda)=\bigl\{\psi\in L^2(\cC): (\psi,f_\lambda\vert_\cC)_{L^2(\cC)} = 0 \mbox{ for all } 
            f_\lambda\in\ker(A_N-\lambda)\bigr\}.
 \end{equation*}
Moreover, 
\begin{itemize}
  \item [{\rm (i)}] $\ker D(\lambda)=\ker\cD(\lambda)\subset H^1(\cC)$,
  \item [{\rm (ii)}] $\mul D(\lambda)=\mul\cD(\lambda)\subset L^2(\cC)$,
  \end{itemize}
  and,
  \begin{itemize}
  \item [{\rm (iii)}] $\ker N(\lambda)=\ker\cN(\lambda)\subset L^2(\cC)$,
  \item [{\rm (iv)}] $\mul N(\lambda)=\mul\cN(\lambda)\subset H^1(\cC)$.
  \end{itemize}
\end{theorem}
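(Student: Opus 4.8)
The key input is Lemma~\ref{lDNmaps401} together with the elliptic regularity $\dom A_D\subset H^{3/2}(\Omega)$ and $\dom A_N\subset H^{3/2}(\Omega)$. I would treat $D(\lambda)$ in detail and then indicate the parallel (dual) argument for $N(\lambda)$. The strategy is to reduce every statement to the corresponding statement about $\cD(\lambda)$ from Proposition~\ref{domdomprop} and \eqref{emulker}, and then upgrade the $H^{1/2}$/$H^{-1/2}$ regularity to $H^1$/$L^2$ regularity using Lemma~\ref{lDNmaps401}.

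\textbf{Domain of $D(\lambda)$.} For the inclusion $\subset$, take $\varphi\in\dom D(\lambda)$. By definition there is $f_\lambda\in H^1(\Omega)$ with $\cL f_\lambda=\lambda f_\lambda$, $f_\lambda\vert_\cC=\varphi$ and $\partial_\nu f_\lambda\vert_\cC\in L^2(\cC)$. Write $f_\lambda=f_0+g$ where $g\in H^{3/2}(\Omega)$ is produced by Lemma~\ref{lDNmaps401}(ii) applied to \emph{some} $H^1(\cC)$-function — but this requires knowing $\varphi\in H^1(\cC)$ first, which is exactly what we want. The cleaner route: note $\varphi\in\dom D(\lambda)\subset\dom\cD(\lambda)$, so by Proposition~\ref{domdomprop} the Dirichlet problem $\cL u=\lambda u$, $u\vert_\cC=\varphi$ is solvable; but also $\partial_\nu f_\lambda\vert_\cC\in L^2(\cC)$. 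Consider the function $w:=f_\lambda$; since $\cL w=\lambda w\in L^2(\Omega)$ and $\partial_\nu w\vert_\cC\in L^2(\cC)$, I would show $w\in H^{3/2}(\Omega)$ by comparing with a fixed solution: using Lemma~\ref{lDNmaps401}(iii) pick $h\in H^{3/2}(\Omega)$ with $\cL h\in L^2(\Omega)$ and $\partial_\nu h\vert_\cC=\partial_\nu f_\lambda\vert_\cC$; then $f_\lambda-h$ has $\cL(f_\lambda-h)\in L^2(\Omega)$ and vanishing Neumann trace, hence lies in $\dom A_N\subset H^{3/2}(\Omega)$, so $f_\lambda=h+(f_\lambda-h)\in H^{3/2}(\Omega)$, and Lemma~\ref{lDNmaps401}(i) gives $\varphi=f_\lambda\vert_\cC\in H^1(\cC)$. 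The orthogonality condition then follows from \eqref{edomgammad} in Proposition~\ref{domdomprop}, rewriting the $H^{1/2}\times H^{-1/2}$ pairing as the $L^2(\cC)$ inner product (legitimate since now both entries lie in $L^2(\cC)$, as $\ker(A_D-\lambda)\subset\dom A_D\subset H^{3/2}(\Omega)$ so $\partial_\nu f_\lambda\vert_\cC\in L^2(\cC)$ by Lemma~\ref{lDNmaps401}(i)). For $\supset$, given $\varphi\in H^1(\cC)$ orthogonal to all $\partial_\nu f_\lambda\vert_\cC$, $f_\lambda\in\ker(A_D-\lambda)$: by Proposition~\ref{domdomprop} (after re-expressing $L^2$-orthogonality as the duality pairing) $\varphi\in\dom\cD(\lambda)$, so there is a solution $u\in H^1(\Omega)$ with $u\vert_\cC=\varphi$; use Lemma~\ref{lDNmaps401}(ii) to pick $g\in H^{3/2}(\Omega)$ with $\cL g\in L^2(\Omega)$ and $g\vert_\cC=\varphi$, so $u-g\in\dom A_D\subset H^{3/2}(\Omega)$, whence $u\in H^{3/2}(\Omega)$ and $\partial_\nu u\vert_\cC\in L^2(\cC)$ by Lemma~\ref{lDNmaps401}(i); thus $\varphi\in\dom D(\lambda)$. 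The formula for $\dom N(\lambda)$ is obtained by the dual argument, swapping the roles of Dirichlet and Neumann traces and using parts (iii),(i) of Lemma~\ref{lDNmaps401} for solvability and regularity.

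\textbf{Kernels and multivalued parts.} Items (i)--(iv) now follow quickly. Since $D(\lambda)\subset\cD(\lambda)$ we have $\ker D(\lambda)\subset\ker\cD(\lambda)$ and $\mul D(\lambda)\subset\mul\cD(\lambda)$; the reverse inclusions are the content. For $\ker D(\lambda)=\ker\cD(\lambda)$: by \eqref{emulker}, $\ker\cD(\lambda)=\{f_\lambda\vert_\cC:f_\lambda\in\ker(A_N-\lambda)\}$, and since $\ker(A_N-\lambda)\subset\dom A_N\subset H^{3/2}(\Omega)$, Lemma~\ref{lDNmaps401}(i) gives $f_\lambda\vert_\cC\in H^1(\cC)$ and $\partial_\nu f_\lambda\vert_\cC=0\in L^2(\cC)$; hence $\{f_\lambda\vert_\cC,0\}\in D(\lambda)$, so $f_\lambda\vert_\cC\in\ker D(\lambda)$, and moreover $\ker\cD(\lambda)\subset H^1(\cC)$, proving (i). For $\mul D(\lambda)=\mul\cD(\lambda)$: by \eqref{emulker}, $\mul\cD(\lambda)=\{\partial_\nu f_\lambda\vert_\cC:f_\lambda\in\ker(A_D-\lambda)\}$, and again $\ker(A_D-\lambda)\subset\dom A_D\subset H^{3/2}(\Omega)$, so $\partial_\nu f_\lambda\vert_\cC\in L^2(\cC)$ by Lemma~\ref{lDNmaps401}(i), and $f_\lambda\vert_\cC=0\in L^2(\cC)$; hence $\{0,\partial_\nu f_\lambda\vert_\cC\}\in D(\lambda)$, giving $\partial_\nu f_\lambda\vert_\cC\in\mul D(\lambda)$, and $\mul\cD(\lambda)\subset L^2(\cC)$, proving (ii). Items (iii) and (iv) follow by the same reasoning with $\cN(\lambda)$, $N(\lambda)$ and the roles of $A_D$, $A_N$ interchanged, using $\ker N(\lambda)\subset L^2(\cC)$ and $\mul N(\lambda)\subset H^1(\cC)$ respectively; alternatively they follow directly from (i),(ii) and $N(\lambda)=D(\lambda)^{-1}$, $\cN(\lambda)=\cD(\lambda)^{-1}$.

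\textbf{Main obstacle.} The only nontrivial point is the $H^{3/2}(\Omega)$-regularity bootstrap for the domain characterisation: showing that a single solution $f_\lambda$ with one trace in $L^2(\cC)$ (or $H^1(\cC)$) already lies in $H^{3/2}(\Omega)$. This is handled by subtracting off a reference function from Lemma~\ref{lDNmaps401}(ii)--(iii) so that the difference lands in $\dom A_D$ or $\dom A_N$, where the Jerison--Kenig/Gesztesy--Mitrea regularity applies; everything else is a translation between the duality pairing $(\cdot,\cdot)_{H^{1/2}(\cC)\times H^{-1/2}(\cC)}$ and the $L^2(\cC)$ inner product, valid precisely because all relevant boundary functions lie in $L^2(\cC)$ by that regularity.
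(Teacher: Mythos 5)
Your proof is correct and follows essentially the same route as the paper: you subtract a reference function from Lemma~\ref{lDNmaps401}~(ii)--(iii) so that the difference lies in $\dom A_D$ or $\dom A_N$, invoke the $H^{3/2}(\Omega)$-regularity there, and conclude with Lemma~\ref{lDNmaps401}~(i), handling the kernels and multivalued parts by the same bootstrap. The brief false start in your domain argument is immediately corrected, and your observation that (iii)--(iv) also follow from (i)--(ii) via $N(\lambda)=D(\lambda)^{-1}$ is a valid (minor) shortcut the paper leaves implicit.
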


\begin{proof}
We verify the assertions for $D(\lambda)$.
Recall first that $\dom\cD(\lambda)$ is given by \eqref{edomgammad}.
Hence the inclusion $\subset$ in \eqref{eabcdef} for 
$\dom D(\lambda)$
follows if we show that for all $f_\lambda\in H^1(\Omega)$ such that 
$\cL f_\lambda=\lambda f_\lambda$ and $\partial_\nu f_\lambda\vert_\cC\in L^2(\cC)$
it follows that $f_\lambda\vert_\cC\in H^1(\cC)$.
By Lemma~\ref{lDNmaps401}~(iii) there exists a $g\in H^{3/2}(\Omega)$ such that
\begin{equation*}
\cL g\in L^2(\Omega)\quad\text{and}\quad \partial_\nu g\vert_\cC=\partial_\nu f_\lambda\vert_\cC. 
\end{equation*}
Then $g-f_\lambda\in H^1(\Omega)$, $\cL(g-f_\lambda)\in L^2(\Omega)$ and $\partial_\nu(g-f_\lambda)\vert_\cC=0$, that is,
$g-f_\lambda\in\dom A_N$.
Hence $g-f_\lambda\in H^{3/2}(\Omega)$ by \cite{GesM2}, Theorem 2.6 and Lemma 4.8. 
As $g\in H^{3/2}(\Omega)$ this yields $f_\lambda\in H^{3/2}(\Omega)$ and therefore Lemma~\ref{lDNmaps401}~(i)
implies $f_\lambda\vert_\cC\in H^1(\cC)$. 
For the inclusion $\supset$ in \eqref{eabcdef} let $\varphi\in H^1(\cC)$ and assume that $(\varphi,\partial_\nu f_\lambda\vert_\cC)=0$
for all $f_\lambda\in\ker(A_D-\lambda)$.
It follows from \eqref{edomgammad} that $\varphi\in\dom\cD(\lambda)$.
Hence there exists an
$f_\lambda\in H^1(\Omega)$ such that $\cL f_\lambda=\lambda f_\lambda$ and $f_\lambda\vert_\cC=\varphi$. 
By Lemma~\ref{lDNmaps401}~(ii)
there exists a $g\in H^{3/2}(\Omega)$ such that 
\begin{equation*}
\cL g\in L^2(\Omega)\quad\text{and}\quad  g\vert_\cC= f_\lambda\vert_\cC. 
\end{equation*}
It follows that 
$g-f_\lambda\in H^1(\Omega)$, $\cL(g-f_\lambda)\in L^2(\Omega)$ and $(g-f_\lambda)\vert_\cC=0$, that is,
$g-f_\lambda\in\dom A_D$.
Hence $g-f_\lambda\in H^{3/2}(\Omega)$ by \cite{GesM2}, Lemma 3.4
As $g\in H^{3/2}(\Omega)$ this yields 
$f_\lambda\in H^{3/2}(\Omega)$ and therefore Lemma~\ref{lDNmaps401}~(i)
implies $\partial_\nu f_\lambda\vert_\cC\in L^2(\cC)$.
We have shown $\{\varphi,\partial_\nu f_\lambda\vert_\cC\}=\{f_\lambda\vert_\cC,\partial_\nu f_\lambda\vert_\cC\}\in D(\lambda)$
and, in particular, $\varphi\in\dom D(\lambda)$.
The assertion on $\dom D(\lambda)$ in \eqref{eabcdef} is shown.

Next we prove (i) and (ii).
As $D(\lambda)$ is contained in $\cD(\lambda)$ it is clear that 
$\ker D(\lambda)\subset\ker\cD(\lambda)$ and $\mul D(\lambda)\subset\mul\cD(\lambda)$.
In order to prove the inclusion $\ker D(\lambda)\supset\ker \cD(\lambda)$ in (i), let 
$f_\lambda\vert_\cC\in\ker\cD(\lambda)$.
Then
$\{f_\lambda\vert_\cC,0\}\in\cD(\lambda)$ and it follows from the definition that $\{f_\lambda\vert_\cC,0\}\in D(\lambda)$.
This shows 
$f_\lambda\vert_\cC\in\ker D(\lambda)$ and (i) is proven.
For (ii) it remains to show the inclusion $\mul D(\lambda)\supset
\mul \cD(\lambda)$.
For this let $\psi\in\mul\cD(\lambda)$.
Then $\{0,\psi\}\in\cD(\lambda)$
and hence there exists an $f_\lambda\in H^1(\Omega)$ such that 
$\cL f_\lambda=\lambda f_\lambda$, $f_\lambda \vert_\cC=0$ and $\partial_\nu f_\lambda\vert_\cC=\psi$. 
This implies $f_\lambda\in\dom A_D$
and from \cite{GesM2}, Lemma 3.4, we conclude that $f_\lambda\in H^{3/2}(\Omega)$.
But then $\psi=\partial_\nu f_\lambda\vert_\cC\in L^2(\partial\Omega)$ 
by Lemma~\ref{lDNmaps401}~(i)
and therefore 
$\{0,\partial_\nu f_\lambda\vert_\cC\}=\{0,\psi\}\in D(\lambda)$, that is, $\psi\in\mul D(\lambda)$.
\end{proof}

As an immediate consequence of \eqref{emulker} and Theorem~\ref{thmdn} we obtain
\begin{equation}\label{mulker2}
 \ker D(\lambda)=\mul N(\lambda)=\bigl\{f_\lambda\vert_\cC:f_\lambda\in \ker (A_N-\lambda) \bigr\}
\end{equation}
and
\begin{equation}\label{mulker3}
 \ker N(\lambda)=\mul D(\lambda)=\bigl\{\partial_\nu f_\lambda\vert_\cC:f_\lambda\in \ker (A_D-\lambda) \bigr\}.
\end{equation}

Furthermore, as a consequence of 
Lemma~\ref{mullem} we obtain the following corollary.
Item (i) coincides with \cite{AEKS}, Proposition 4.11.

\begin{corollary}\label{mulcor}
Let $\lambda\in\dC$.
Then
\begin{itemize}
 \item [(i)] $\mul D(\lambda)=\{0\}$ if and only if $\lambda\not\in\sigma_p(A_D)$,
 \item [(ii)] $\mul N(\lambda)=\{0\}$ if and only if $\lambda\not\in\sigma_p(A_N)$,
\end{itemize} 
and,
\begin{itemize}
 \item [(iii)] $\ker N(\lambda)\not=\{0\}$ if and only if $\lambda\in\sigma_p(A_D)$,
 \item [(iv)] $\ker D(\lambda)\not=\{0\}$ if and only if $\lambda\in\sigma_p(A_N)$.
\end{itemize} 
\end{corollary}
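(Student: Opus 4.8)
The plan is to deduce all four equivalences directly from Lemma~\ref{mullem} together with the identities \eqref{mulker2} and \eqref{mulker3} established just before the statement of Corollary~\ref{mulcor}. The point is that Theorem~\ref{thmdn}(ii) and (iii) already assert $\mul D(\lambda)=\mul\cD(\lambda)$ and $\ker N(\lambda)=\ker\cN(\lambda)$, and the mirror statements (i) and (iv) give $\ker D(\lambda)=\ker\cD(\lambda)$ and $\mul N(\lambda)=\mul\cN(\lambda)$; so each of the four objects appearing in Corollary~\ref{mulcor} coincides with the corresponding object for the $H^{\pm1/2}$-level relations $\cD(\lambda)$ or $\cN(\lambda)$. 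Once that identification is in hand, each of the four equivalences is literally the corresponding equivalence in Lemma~\ref{mullem}.

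Concretely, for (i) I would write $\mul D(\lambda)=\mul\cD(\lambda)$ by Theorem~\ref{thmdn}(ii), and then $\mul\cD(\lambda)=\{0\}$ if and only if $\lambda\notin\sigma_p(A_D)$ by Lemma~\ref{mullem}(i); combining the two gives (i). Statement (ii) is handled identically using Theorem~\ref{thmdn}(iv) and Lemma~\ref{mullem}(ii). For (iii) I would use Theorem~\ref{thmdn}(iii), namely $\ker N(\lambda)=\ker\cN(\lambda)$, and then Lemma~\ref{mullem}(iii), which says $\ker\cN(\lambda)\neq\{0\}$ if and only if $\lambda\in\sigma_p(A_D)$. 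Finally (iv) follows from Theorem~\ref{thmdn}(i), giving $\ker D(\lambda)=\ker\cD(\lambda)$, together with Lemma~\ref{mullem}(iv). Alternatively one could bypass Lemma~\ref{mullem} and read (i)--(iv) off from \eqref{mulker2}, \eqref{mulker3} and the unique continuation property exactly as in the proof of Lemma~\ref{mullem}, but invoking the already-proved Lemma is cleaner.

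There is essentially no obstacle here: the corollary is a bookkeeping consequence of results proved earlier in the section, and the only thing to be careful about is matching the indices correctly, since $D(\lambda)$ relates to $A_D$ on the multivalued side but to $A_N$ on the kernel side (and dually for $N(\lambda)$), a crossover that is already visible in \eqref{mulker2}--\eqref{mulker3}. The remark that item (i) reproduces \cite{AEKS}, Proposition~4.11, requires no argument. So the write-up is just four one-line deductions, and I would present them in the order (i), (ii), (iii), (iv) with a single sentence each citing Theorem~\ref{thmdn} and Lemma~\ref{mullem}.
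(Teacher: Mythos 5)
Your proposal is correct and is exactly the argument the paper intends: the corollary is stated as a consequence of Lemma~\ref{mullem}, with the identifications $\ker D(\lambda)=\ker\cD(\lambda)$, $\mul D(\lambda)=\mul\cD(\lambda)$, $\ker N(\lambda)=\ker\cN(\lambda)$, $\mul N(\lambda)=\mul\cN(\lambda)$ supplied by Theorem~\ref{thmdn}. Your index matching (items (i)--(iv) of Theorem~\ref{thmdn} versus Lemma~\ref{mullem}) is accurate, so nothing further is needed.
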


In the following we investigate the Neumann-to-Dirichlet map in  $L^2(\cC)$.  
We will also make use of the restriction $\gamma_N(\lambda)$ of 
$\gamma_\cN(\lambda)$ onto $L^2(\cC)$ given by
$$
\gamma_N(\lambda):=\bigl\{\{\partial_\nu f_\lambda\vert_\cC,f_\lambda \} \in L^2(\cC)\times L^2(\Omega):
  f_\lambda\in H^1(\Omega), \; \cL f_\lambda=\lambda f_\lambda
   \mbox{ and } \partial_\nu f_\lambda\vert_\cC\in L^2(\cC)\bigr\},
$$
which is now regarded as an operator or relation in $L^2(\cC)\times L^2(\Omega)$.
It is important to note that
\begin{equation*}
\dom\gamma_N(\lambda)=\dom N(\lambda)\quad\text{and}\quad \mul\gamma_N(\lambda)=\ker(A_N-\lambda),
\end{equation*}
and, in particular, $\dom\gamma_N(\lambda)=L^2(\cC)$ if and only if $\lambda\not\in\sigma_p(A_N)$.
Note that an analogous $L^2$-restriction of 
$\gamma_\cD(\lambda)$ does not lead to a smaller operator or relation.
However, for consistency we shall write here $\gamma_D(\lambda)$
instead of $\gamma_\cD(\lambda)$, that is, 
$$
\gamma_D(\lambda):=\bigl\{\{f_\lambda\vert_\cC,f_\lambda \} \in L^2(\cC)\times L^2(\Omega):
   f_\lambda\in H^1(\Omega) \mbox{ and } \cL f_\lambda=\lambda f_\lambda\bigr\},
$$
and  $\gamma_D(\lambda)$ is regarded as an operator or relation in $L^2(\cC)\times L^2(\Omega)$.
Obviously we have
\begin{equation*}
\dom\gamma_D(\lambda)=\dom D(\lambda)\quad\text{and}\quad \mul\gamma_D(\lambda)=\ker(A_D-\lambda).
\end{equation*}
The other statements and formulas for $\gamma_\cD(\lambda)$ and $\gamma_\cN(\lambda)$
in the previous section remain true for $\gamma_D(\lambda)$ and $\gamma_N(\lambda)$ in an appropriate form.
In particular,  $\gamma_\cD(\lambda)^\prime$ and 
$\gamma_\cN(\lambda)^\prime$ in Lemma~\ref{gammaadj} can now be regarded as operators or relations $\gamma_\cD(\lambda)^*$ and $\gamma_N(\lambda)^*$,
respectively, in $L^2(\Omega)\times L^2(\cC)$.
Specifically,  if $\lambda\in\dC$ then
\begin{equation*}
  \gamma_D(\lambda)^* =\bigl\{\{(A_D-\overline\lambda)g, -\partial_\nu g\vert_\cC\}:g\in\dom A_D\bigr\}
\end{equation*}
and
\begin{equation*}
 \gamma_N(\lambda)^* =\bigl\{\{(A_N-\overline\lambda)g, g\vert_\cC\}:g\in\dom A_N\bigr\}.
\end{equation*}

We list some useful consequences in the next corollary.

\begin{corollary}\label{gammaadjcor2}
The following assertions are valid.
\begin{itemize}
 \item [{\rm (i)}] If $\lambda\in\rho(A_D)$ then
\[
 \gamma_D(\lambda)^*  \colon  L^2(\Omega)\rightarrow L^2(\cC),\quad 
     h\mapsto -\partial_\nu\bigl((A_D-\overline\lambda)^{-1}h\bigr)\vert_\cC,
\]
is bounded.
Moreover, $\gamma_D(\lambda) \colon L^2(\cC)\supset \dom\gamma_D(\lambda) \rightarrow L^2(\Omega)$ is a bounded operator 
with dense domain $\dom\gamma_D(\lambda)=H^1(\cC)$
and $\gamma_D(\lambda)$ admits a unique continuous extension from $L^2(\cC)$ into $L^2(\Omega)$.
 \item [{\rm (ii)}] If $\lambda\in\rho(A_N)$ then 
\[
 \gamma_N(\lambda)^*  \colon  L^2(\Omega)\rightarrow L^2(\cC),\quad 
   h\mapsto \bigl((A_N-\overline\lambda)^{-1}h\bigr)\vert_\cC
\]
 and  $\gamma_N(\lambda) \colon L^2(\cC)\rightarrow L^2(\Omega)$ are compact operators.
\end{itemize}
\end{corollary}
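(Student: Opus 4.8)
The plan is to deduce (i) and (ii) from the explicit descriptions of $\gamma_D(\lambda)^*$ and $\gamma_N(\lambda)^*$ recorded before the corollary, from the regularity results $\dom A_D,\dom A_N\subset H^{3/2}(\Omega)$ together with Lemma~\ref{lDNmaps401}, from Theorem~\ref{thmdn}, and from elementary facts about adjoints of linear relations. For (i), note first that $\lambda\in\rho(A_D)$ forces $\overline\lambda\in\rho(A_D)$ since $A_D$ is selfadjoint, so for every $h\in L^2(\Omega)$ the element $g:=(A_D-\overline\lambda)^{-1}h$ is well defined and lies in $\dom A_D\subset H^{3/2}(\Omega)$. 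Combining the formula $\gamma_D(\lambda)^*=\{\{(A_D-\overline\lambda)g,-\partial_\nu g\vert_\cC\}:g\in\dom A_D\}$ (and $\dom\gamma_D(\lambda)^*=\ran(A_D-\overline\lambda)=L^2(\Omega)$) with Lemma~\ref{lDNmaps401}(i), this identifies $\gamma_D(\lambda)^*$ as the everywhere defined $L^2(\cC)$-valued operator $h\mapsto-\partial_\nu\bigl((A_D-\overline\lambda)^{-1}h\bigr)\vert_\cC$. For its boundedness I would invoke the closed graph theorem to see that $h\mapsto(A_D-\overline\lambda)^{-1}h$ is bounded from $L^2(\Omega)$ into $H^{3/2}(\Omega)$, use $\cL g=h+\overline\lambda g$ to bound $\|\cL g\|_{L^2(\Omega)}$ by $C\|h\|_{L^2(\Omega)}$, and then apply the $L^2$-estimate for the Neumann trace underlying Lemma~\ref{lDNmaps401}(i), $\|\partial_\nu g\vert_\cC\|_{L^2(\cC)}\le C\bigl(\|g\|_{H^{3/2}(\Omega)}+\|\cL g\|_{L^2(\Omega)}\bigr)$.

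For the remaining assertions in (i), since $\ker(A_D-\lambda)=\{0\}$ Theorem~\ref{thmdn} gives $\dom\gamma_D(\lambda)=\dom D(\lambda)=H^1(\cC)$, which is dense in $L^2(\cC)$, while $\mul\gamma_D(\lambda)=\ker(A_D-\lambda)=\{0\}$. Thus $\gamma_D(\lambda)$ is a densely defined operator in $L^2(\cC)\times L^2(\Omega)$ whose adjoint is the bounded everywhere defined operator $\gamma_D(\lambda)^*$ just analysed; consequently $\overline{\gamma_D(\lambda)}=\gamma_D(\lambda)^{**}$ is the adjoint of a bounded operator, hence bounded and everywhere defined on $L^2(\cC)$, so $\gamma_D(\lambda)$ itself is bounded for the $L^2(\cC)$-norm on its domain and $\gamma_D(\lambda)^{**}$ is its unique continuous extension to $L^2(\cC)$. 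Alternatively, the bound $\|\gamma_D(\lambda)\varphi\|_{L^2(\Omega)}\le\|\gamma_D(\lambda)^*\|\,\|\varphi\|_{L^2(\cC)}$ can be read off directly by testing $\gamma_D(\lambda)\varphi$ against $h\in L^2(\Omega)$ and using the defining identity of the adjoint.

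The proof of (ii) follows the same pattern but produces compactness. As before $\overline\lambda\in\rho(A_N)$, and $g:=(A_N-\overline\lambda)^{-1}h\in\dom A_N\subset H^1(\Omega)$ for every $h\in L^2(\Omega)$, so by the formula $\gamma_N(\lambda)^*=\{\{(A_N-\overline\lambda)g,g\vert_\cC\}:g\in\dom A_N\}$ the map $\gamma_N(\lambda)^*$ is the everywhere defined operator $h\mapsto\bigl((A_N-\overline\lambda)^{-1}h\bigr)\vert_\cC$. It factors as
\[
L^2(\Omega)\xrightarrow{\,(A_N-\overline\lambda)^{-1}\,}H^1(\Omega)\xrightarrow{\,u\mapsto u\vert_\cC\,}H^{1/2}(\cC)\hookrightarrow L^2(\cC),
\]
where the first arrow is bounded by the closed graph theorem (its range is $\dom A_N\subset H^1(\Omega)$), the second is the continuous trace operator, and the inclusion $H^{1/2}(\cC)\hookrightarrow L^2(\cC)$ is compact by Rellich's theorem on the compact Lipschitz boundary $\cC$; hence $\gamma_N(\lambda)^*$ is compact. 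Finally, $\lambda\in\rho(A_N)$ gives $\mul\gamma_N(\lambda)=\ker(A_N-\lambda)=\{0\}$ and $\dom\gamma_N(\lambda)=\dom N(\lambda)=L^2(\cC)$, so $\gamma_N(\lambda)$ is an everywhere defined operator on $L^2(\cC)$ that coincides with $\gamma_N(\lambda)^{**}=(\gamma_N(\lambda)^*)^*$, which is compact as the adjoint of a compact operator.

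I expect the step requiring most care to be the $L^2(\cC)$-boundedness of $\gamma_D(\lambda)^*$: in contrast with the $H^{-1/2}(\cC)$-valued statement of Lemma~\ref{gammaadj}, it genuinely uses the deep $H^{3/2}(\Omega)$-regularity of $\dom A_D$ together with the $L^2$-estimate for the Neumann trace. The compactness statements in (ii), by contrast, are comparatively soft, resting only on Rellich's compact embedding and on the elementary fact that the adjoint of a compact operator between Hilbert spaces is compact.
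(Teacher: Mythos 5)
Your proof is correct and follows the same overall skeleton as the paper's (adjoint identification, closed graph theorem, passage to the double adjoint, a Rellich-type compact embedding), but it differs in two places worth noting. First, for the boundedness of $\gamma_D(\lambda)^*$ the paper simply observes that $\gamma_D(\lambda)^*$ is closed (being an adjoint) and everywhere defined on $L^2(\Omega)$, hence bounded by the closed graph theorem; your route through the boundedness of $(A_D-\overline\lambda)^{-1}\colon L^2(\Omega)\to H^{3/2}(\Omega)$ and a quantitative Neumann-trace estimate $\|\partial_\nu g\vert_\cC\|_{L^2(\cC)}\le C\bigl(\|g\|_{H^{3/2}(\Omega)}+\|\cL g\|_{L^2(\Omega)}\bigr)$ is also valid (the estimate is available in Gesztesy--Mitrea), but it imports a norm inequality that Lemma~\ref{lDNmaps401} does not state, whereas the abstract argument needs only the membership $\partial_\nu g\vert_\cC\in L^2(\cC)$ to know that $\gamma_D(\lambda)^*$ maps into $L^2(\cC)$. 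Second, for compactness in (ii) you factor $\gamma_N(\lambda)^*$ through the compact embedding $H^{1/2}(\cC)\hookrightarrow L^2(\cC)$ on the boundary and then pass to the adjoint, while the paper shows that $\gamma_N(\lambda)$ is bounded from $L^2(\cC)$ into $H^1(\Omega)$ and uses the compact embedding $H^1(\Omega)\hookrightarrow L^2(\Omega)$ in the interior; both are legitimate, and yours has the small advantage of making the compactness of $\gamma_N(\lambda)^*$ directly visible from its mapping properties. All the supporting identifications you use ($\dom\gamma_D(\lambda)=H^1(\cC)$ dense, $\mul\gamma_N(\lambda)^{**}=(\dom\gamma_N(\lambda)^*)^\perp=\{0\}$, $\dom\gamma_N(\lambda)=L^2(\cC)$ for $\lambda\in\rho(A_N)$) are justified by Theorem~\ref{thmdn} and the displayed formulas for $\gamma_D(\lambda)^*$ and $\gamma_N(\lambda)^*$, so no gap remains.
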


\begin{proof}
It is clear that for all $\lambda\in\rho(A_D)$ (or $\lambda\in\rho(A_N)$) the operator 
$\gamma_D(\lambda)^*$ (or $\gamma_N(\lambda)^*$, respectively) is closed and defined on 
the whole space $L^2(\Omega)$, and hence bounded by the closed graph theorem.
Thus $\gamma_D(\lambda)^{**}$ is bounded as well and this implies that 
$\gamma_D(\lambda)$ admits a unique continuous extension on $L^2(\cC)$ which is the closure 
$\overline{\gamma_D(\lambda)}=\gamma_D(\lambda)^{**}$.
The operator $\gamma_N(\lambda)$
is defined on $L^2(\cC)$ and coincides with $\gamma_N(\lambda)^{**}$, and hence it is bounded.
In particular, $\gamma_N(\lambda)$ is closed as an operator
from $L^2(\cC)$ into $L^2(\Omega)$, and therefore it also closed as an operator from $L^2(\cC)$ into $H^1(\Omega)$.
As $H^1(\Omega)$ is compactly
embedded in $L^2(\Omega)$ this implies that $\gamma_N(\lambda)$ and consequently
also $\gamma_N(\lambda)^*$ are compact.
\end{proof}

Theorem~\ref{dnthm} and Corollary~\ref{dncor} have the following analogue statements
for $D(\lambda)$ and $N(\lambda)$.

\begin{corollary}
Let $\lambda,\mu\in\dC$. Then
\[
\begin{split}
 D(\lambda)-D(\overline\mu)&=(\overline\mu-\lambda)\gamma_D(\mu)^*\gamma_D(\lambda),\\
 N(\lambda)-N(\overline\mu)&=(\lambda-\overline\mu)\gamma_N(\mu)^*\gamma_N(\lambda),
\end{split}
\]
and, in particular,
\begin{equation}\label{useful}
\begin{split}
 D(\lambda)-D(\overline\mu)&=(\overline\mu-\lambda)\gamma_D(\mu)^*\bigl(I+(\lambda-\mu)(A_D-\lambda)^{-1}\bigr)\gamma_D(\mu),\\
 N(\lambda)-N(\overline\mu)&=(\lambda-\overline\mu)\gamma_N(\mu)^*\bigl(I+(\lambda-\mu)(A_N-\lambda)^{-1}\bigr)\gamma_N(\mu).
\end{split}
\end{equation}
\end{corollary}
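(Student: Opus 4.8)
The strategy is to rerun the proofs of Theorem~\ref{dnthm}, Lemma~\ref{gammalambdamu} and Corollary~\ref{dncor} in the $L^2(\cC)$-setting, with $D(\lambda)$, $N(\lambda)$, $\gamma_D(\lambda)$, $\gamma_N(\lambda)$, $\gamma_D(\lambda)^*$, $\gamma_N(\lambda)^*$ in the roles of $\cD(\lambda)$, $\cN(\lambda)$, $\gamma_\cD(\lambda)$, $\gamma_\cN(\lambda)$, $\gamma_\cD(\lambda)^\prime$, $\gamma_\cN(\lambda)^\prime$. Those earlier arguments are entirely algebraic: they use only Green's identity and the descriptions of the adjoints in Lemma~\ref{gammaadj}, both of which are available verbatim here. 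Hence the sole additional task is to check, at each step, that the Dirichlet and Neumann traces that occur actually belong to $L^2(\cC)$, so that the intermediate objects lie in the $L^2$-relations and not merely in their $H^{\pm1/2}(\cC)$-counterparts. This is exactly what Lemma~\ref{lDNmaps401}, Theorem~\ref{thmdn} and the regularity results $\dom A_D\subset H^{3/2}(\Omega)$, $\dom A_N\subset H^{3/2}(\Omega)$ are for.

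In detail, fix $\lambda,\mu\in\dC$; as in the proof of Theorem~\ref{dnthm} we may assume $\lambda\neq\overline\mu$. For $N(\lambda)-N(\overline\mu)=(\lambda-\overline\mu)\gamma_N(\mu)^*\gamma_N(\lambda)$ I would copy the proof of \eqref{enmrel}. For ``$\subset$'': from $\{\varphi,\psi\}\in N(\lambda)-N(\overline\mu)$ one obtains solutions $f_\lambda,g_{\overline\mu}\in H^1(\Omega)$ of $\cL f_\lambda=\lambda f_\lambda$, $\cL g_{\overline\mu}=\overline\mu g_{\overline\mu}$ with $\varphi=\partial_\nu f_\lambda\vert_\cC=\partial_\nu g_{\overline\mu}\vert_\cC\in L^2(\cC)$ and $\psi=f_\lambda\vert_\cC-g_{\overline\mu}\vert_\cC$; thus $\{\varphi,f_\lambda\}\in\gamma_N(\lambda)$, and $h:=(f_\lambda-g_{\overline\mu})/(\lambda-\overline\mu)\in\dom A_N$ with $(A_N-\overline\mu)h=f_\lambda$, so $h\vert_\cC\in H^1(\cC)\subset L^2(\cC)$ because $h\in\dom A_N\subset H^{3/2}(\Omega)$ and by Lemma~\ref{lDNmaps401}~(i). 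The $L^2$-form of \eqref{eaaa} gives $\{f_\lambda,h\vert_\cC\}\in\gamma_N(\mu)^*$, and composing and using $(\lambda-\overline\mu)h\vert_\cC=\psi$ yields $\{\varphi,\psi\}\in(\lambda-\overline\mu)\gamma_N(\mu)^*\gamma_N(\lambda)$. For ``$\supset$'' one reverses the argument exactly as in the proof of \eqref{enmrel}: the function $h\in\dom A_N$ produced there satisfies $\partial_\nu h\vert_\cC=0\in L^2(\cC)$, so the new solution $g_{\overline\mu}:=f_\lambda-(\lambda-\overline\mu)h$ has $\partial_\nu g_{\overline\mu}\vert_\cC=\partial_\nu f_\lambda\vert_\cC=\varphi\in L^2(\cC)$, and therefore $\{\partial_\nu f_\lambda\vert_\cC,f_\lambda\vert_\cC\}\in N(\lambda)$ and $\{\partial_\nu g_{\overline\mu}\vert_\cC,g_{\overline\mu}\vert_\cC\}\in N(\overline\mu)$, whence $\{\varphi,\psi\}\in N(\lambda)-N(\overline\mu)$. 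The formula for $D(\lambda)-D(\overline\mu)$ is proved in the same way; the traces that must be checked to lie in $L^2(\cC)$ are now $\partial_\nu f_\lambda\vert_\cC$ and $\partial_\nu g_{\overline\mu}\vert_\cC$ (by definition of $D$), $\partial_\nu h\vert_\cC$ for $h\in\dom A_D\subset H^{3/2}(\Omega)$ (Lemma~\ref{lDNmaps401}~(i)), and $f_\lambda\vert_\cC$, which lies in $H^1(\cC)$ by the inclusion $\subset$ in \eqref{eabcdef} of Theorem~\ref{thmdn}, while for $h\in\dom A_D$ one has $h\vert_\cC=0$.

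For the two identities in \eqref{useful} I would first record the $L^2$-analogue of Lemma~\ref{gammalambdamu}, namely $\gamma_N(\lambda)\cap(\dom\gamma_N(\mu)\times L^2(\Omega))=(I+(\lambda-\mu)(A_N-\lambda)^{-1})\gamma_N(\mu)$ and its counterpart for $\gamma_D$; the proof is word for word that of Lemma~\ref{gammalambdamu}, since the intermediate function $h=(f_\lambda-f_\mu)/(\lambda-\mu)$ lies in $\dom A_N$ (respectively $\dom A_D$) and, for $\gamma_N$, the coinciding Neumann traces $\partial_\nu f_\lambda\vert_\cC=\partial_\nu f_\mu\vert_\cC$ lie in $L^2(\cC)$ by hypothesis. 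One then deduces \eqref{useful} from the first two identities of the corollary exactly as Corollary~\ref{dncor} is deduced from Theorem~\ref{dnthm} and Lemma~\ref{gammalambdamu}, using $\dom(D(\lambda)-D(\overline\mu))=\dom\gamma_D(\lambda)\cap\dom\gamma_D(\mu)$ and $\dom(N(\lambda)-N(\overline\mu))=\dom\gamma_N(\lambda)\cap\dom\gamma_N(\mu)$, so that replacing $\gamma_N(\lambda)$ by its restriction $\gamma_N(\lambda)\cap(\dom\gamma_N(\mu)\times L^2(\Omega))$ inside the composition $\gamma_N(\mu)^*\gamma_N(\lambda)$ changes nothing. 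The only genuine obstacle in all of this is the bookkeeping of $L^2(\cC)$-regularity of traces; its single non-formal instance is that a solution of $\cL f_\lambda=\lambda f_\lambda$ with $\partial_\nu f_\lambda\vert_\cC\in L^2(\cC)$ automatically has $f_\lambda\vert_\cC\in H^1(\cC)$, which is furnished by Theorem~\ref{thmdn} and ultimately by the Jerison--Kenig and Gesztesy--Mitrea regularity $\dom A_N\subset H^{3/2}(\Omega)$.
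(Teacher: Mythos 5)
Your overall strategy is the right one: the paper offers no proof of this corollary at all (it is introduced as the ``analogue statement'' of Theorem~\ref{dnthm} and Corollary~\ref{dncor}), and rerunning those algebraic arguments while bookkeeping the $L^2(\cC)$-regularity of the traces is exactly what is intended. Your treatment of the two $N$-identities is correct: in both inclusions of $N(\lambda)-N(\overline\mu)=(\lambda-\overline\mu)\gamma_N(\mu)^*\gamma_N(\lambda)$ the Neumann trace $\varphi=\partial_\nu f_\lambda\vert_\cC$ lies in $L^2(\cC)$ \emph{by definition} of $N(\lambda)$ on one side and of $\gamma_N(\lambda)$ on the other, the auxiliary function $h$ satisfies $\partial_\nu h\vert_\cC=0$, and the only inputs are Green's identity and the $L^2$-form of \eqref{eaaa}; the passage to the second identity in \eqref{useful} via the $L^2$-analogue of Lemma~\ref{gammalambdamu} also goes through as you describe.

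The $D$-identity is where your argument has a genuine gap, in the inclusion $(\overline\mu-\lambda)\gamma_D(\mu)^*\gamma_D(\lambda)\subset D(\lambda)-D(\overline\mu)$. There you start from $\{\varphi,f_\lambda\}\in\gamma_D(\lambda)$ and $f_\lambda=(A_D-\overline\mu)h$ with $h\in\dom A_D$, and you correctly get $\psi=(\lambda-\overline\mu)\partial_\nu h\vert_\cC\in L^2(\cC)$ from $\dom A_D\subset H^{3/2}(\Omega)$ and Lemma~\ref{lDNmaps401}~(i). But to land in $D(\lambda)-D(\overline\mu)$ you need $\partial_\nu f_\lambda\vert_\cC$ and $\partial_\nu g_{\overline\mu}\vert_\cC$ \emph{individually} in $L^2(\cC)$, and only their difference is controlled. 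Your appeal to the inclusion $\subset$ in \eqref{eabcdef} is circular: that inclusion presupposes $\varphi\in\dom D(\lambda)$ (equivalently $\partial_\nu f_\lambda\vert_\cC\in L^2(\cC)$), which is precisely what is at stake. Unlike $\gamma_N(\lambda)$, the relation $\gamma_D(\lambda)$ as displayed in the paper carries \emph{no} $L^2$-restriction on the Neumann trace, so $\dom\gamma_D(\lambda)=\dom\cD(\lambda)$, which is strictly larger than $\dom D(\lambda)=\dom\cD(\lambda)\cap H^1(\cC)$ in general. Concretely, for $\lambda=\mu=i$ the right-hand side has domain $H^{1/2}(\cC)$ while $\dom\bigl(D(i)-D(-i)\bigr)=H^1(\cC)$, so with the literal definition of $\gamma_D(\lambda)$ the identity cannot be proved because it fails. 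The repair (consistent with the paper's assertion $\dom\gamma_D(\lambda)=\dom D(\lambda)$, which is incompatible with its displayed definition of $\gamma_D(\lambda)$) is to read $\gamma_D(\lambda)$ as restricted to solutions $f_\lambda\in H^{3/2}(\Omega)$, equivalently $\partial_\nu f_\lambda\vert_\cC\in L^2(\cC)$, equivalently $f_\lambda\vert_\cC\in H^1(\cC)$; with that reading $\partial_\nu f_\lambda\vert_\cC\in L^2(\cC)$ is available by definition, $\partial_\nu g_{\overline\mu}\vert_\cC=\partial_\nu f_\lambda\vert_\cC-(\lambda-\overline\mu)\partial_\nu h\vert_\cC\in L^2(\cC)$ follows, and your argument closes. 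You should state this restriction explicitly rather than rely on Theorem~\ref{thmdn} at that point.
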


In the next theorem we show that for all $\lambda\in\dR$ the Neumann-to-Dirichlet map is selfadjoint in $L^2(\cC)$.
Its operator part is compact and has at most finitely many negative eigenvalues.
In particular, the Neumann-to-Dirichlet map 
is bounded from below for all $\lambda\in\dR$. 
For a selfadjoint operator or relation $S$
we shall denote by $\kappa_-(S)$ the number of strictly negative eigenvalues,
counted with multiplicity. Similarly we denote by $\kappa_+(S)$
the number of strictly positive eigenvalues of $S$, counted with multiplicity.

\begin{theorem}\label{dnmapsa}
Let $\lambda\in\dR$.
Then the Neumann-to-Dirichlet map $N(\lambda)$ is a selfadjoint relation in $L^2(\cC)$ defined on the closed subspace
\begin{equation*}
\dom N(\lambda)= \bigl\{\psi\in L^2(\cC):(\psi,f_\lambda\vert_\cC)_{L^2(\cC)}=0 \mbox{ for all }
f_\lambda\in \ker (A_N-\lambda)\bigr\}\subset L^2(\cC)
\end{equation*}
with multivalued part $\mul N(\lambda)= \{f_\lambda\vert_\cC:f_\lambda\in \ker (A_N-\lambda)\}$.
The operator part $N_{\rm op}(\lambda)$ of $N(\lambda)$
is a compact selfadjoint operator in the Hilbert space $\dom N(\lambda)$.
Moreover,
\begin{itemize}
 \item [{\rm (i)}] $\kappa_-(N(\lambda))\leq \kappa_-(A_N-\lambda)<\infty$ and $\kappa_+(N(\lambda))=\infty$,
 \item [{\rm (ii)}] $\dim\ker(N(\lambda))=\dim\ker(A_D-\lambda)<\infty$,
\end{itemize}
and
\begin{itemize}
 \item [{\rm (iii)}]$\dim\mul(N(\lambda))=\dim\ker(A_N-\lambda)<\infty$.
\end{itemize}
\end{theorem}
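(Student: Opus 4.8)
The plan is to treat $N(\lambda)$ first as a symmetric relation, and then to transfer selfadjointness, compactness of the operator part and the eigenvalue bounds from the resolvent $(A_N-\lambda)^{-1}$ by a perturbation argument based at a point $\lambda_0$ far below the spectrum.

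\textit{Symmetry and the description of $\dom N(\lambda)$, $\mul N(\lambda)$.} If $\{\psi,\varphi\},\{\psi',\varphi'\}\in N(\lambda)$ come from solutions $f_\lambda,g_\lambda$, then Lemma~\ref{greenlem} with $\mu=\overline\lambda=\lambda$ gives $(\varphi,\psi')_{L^2(\cC)}-(\psi,\varphi')_{L^2(\cC)}=(\overline\lambda-\lambda)(f_\lambda,g_\lambda)_{L^2(\Omega)}=0$, so $N(\lambda)$ is symmetric in $L^2(\cC)$. The description of $\dom N(\lambda)$ is Theorem~\ref{thmdn}. The map $f_\lambda\mapsto f_\lambda\vert_\cC$ is injective on $\ker(A_N-\lambda)$ (if $f_\lambda\vert_\cC=0$ then also $\partial_\nu f_\lambda\vert_\cC=0$, so $f_\lambda=0$ by unique continuation), so \eqref{mulker2} shows $\mul N(\lambda)=\{f_\lambda\vert_\cC:f_\lambda\in\ker(A_N-\lambda)\}$ has dimension $\dim\ker(A_N-\lambda)$, which is finite since $A_N$ has compact resolvent; this is (iii). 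In particular $\dom N(\lambda)=(\mul N(\lambda))^\perp$ is a closed subspace of $L^2(\cC)$.

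\textit{The base point.} Fix $\lambda_0\in\dR$ with $\lambda_0<\essinf V$ and $\lambda_0\le\lambda$; then $\lambda_0\in\rho(A_D)\cap\rho(A_N)$. By the first Green identity, for $\varphi\in H^{1/2}(\cC)$ and the unique $f$ with $\cL f=\lambda_0 f$, $f\vert_\cC=\varphi$, one has $(\cD(\lambda_0)\varphi,\varphi)_{H^{-1/2}(\cC)\times H^{1/2}(\cC)}=\mathfrak a_N[f]-\lambda_0\|f\|_{L^2(\Omega)}^2\ge(\essinf V-\lambda_0)\|f\|_{L^2(\Omega)}^2$, so $\cD(\lambda_0)$ is boundedly invertible and strictly positive, hence so is $\cN(\lambda_0)=\cD(\lambda_0)^{-1}$. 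Since $\ker(A_N-\lambda_0)=\{0\}$ we have $\dom N(\lambda_0)=L^2(\cC)$, and $N(\lambda_0)=\cN(\lambda_0)\vert_{L^2(\cC)}$ factors as the composition of the embedding $L^2(\cC)\hookrightarrow H^{-1/2}(\cC)$, the bounded operator $\cN(\lambda_0)\colon H^{-1/2}(\cC)\to H^{1/2}(\cC)$, and the compact embedding $H^{1/2}(\cC)\hookrightarrow L^2(\cC)$; thus $N(\lambda_0)$ is a compact, strictly positive, selfadjoint operator in $L^2(\cC)$, and in particular $\kappa_-(N(\lambda_0))=0$.

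\textit{Perturbation and selfadjointness.} By \eqref{useful}, $N(\lambda)-N(\lambda_0)=(\lambda-\lambda_0)\gamma_N(\lambda_0)^*\bigl(I+(\lambda-\lambda_0)(A_N-\lambda)^{-1}\bigr)\gamma_N(\lambda_0)$. The relation $R:=I+(\lambda-\lambda_0)(A_N-\lambda)^{-1}$ is selfadjoint in $L^2(\Omega)$, with $\mul R=\ker(A_N-\lambda)$ and bounded (in fact compact) operator part, while $\gamma_N(\lambda_0)\colon L^2(\cC)\to L^2(\Omega)$ is compact and injective by Corollary~\ref{gammaadjcor2}(ii). Applying Proposition~\ref{ttprop} to $\gamma_N(\lambda_0)$ and $R$ shows that $(\lambda-\lambda_0)\gamma_N(\lambda_0)^*R\gamma_N(\lambda_0)$ is a selfadjoint relation in $L^2(\cC)$ whose operator part is compact; since $\mul N(\lambda_0)=\{0\}$, adding the bounded selfadjoint $N(\lambda_0)$ yields that $N(\lambda)$ is selfadjoint with $\mul N(\lambda)=\gamma_N(\lambda_0)^*\ker(A_N-\lambda)$, $\dom N(\lambda)=(\mul N(\lambda))^\perp$, and operator part $N_{\rm op}(\lambda)$ a compact selfadjoint operator in the Hilbert space $\dom N(\lambda)$; this is also consistent with Theorem~\ref{thmdn}. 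For (i), on the eigenspace of $A_N$ for the eigenvalue $\mu_k$ the operator part of $(\lambda-\lambda_0)R$ acts by $(\lambda-\lambda_0)(\mu_k-\lambda_0)/(\mu_k-\lambda)$, which is negative exactly when $\mu_k<\lambda$, so $\kappa_-\bigl((\lambda-\lambda_0)R\bigr)=\#\{k:\mu_k<\lambda\}=\kappa_-(A_N-\lambda)<\infty$; since $\gamma_N(\lambda_0)$ is injective, the form $\psi\mapsto(\lambda-\lambda_0)(R\gamma_N(\lambda_0)\psi,\gamma_N(\lambda_0)\psi)_{L^2(\Omega)}$ has at most $\kappa_-(A_N-\lambda)$ negative squares, and combined with $N(\lambda_0)>0$ this gives $\kappa_-(N(\lambda))\le\kappa_-(A_N-\lambda)<\infty$. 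Since $\dom N(\lambda)$ is infinite-dimensional while $\ker N_{\rm op}(\lambda)$ and the negative spectral subspace of the compact selfadjoint operator $N_{\rm op}(\lambda)$ are finite-dimensional, $N_{\rm op}(\lambda)$ has infinitely many positive eigenvalues, i.e.\ $\kappa_+(N(\lambda))=\infty$; this proves (i). Finally, by \eqref{mulker3} one has $\ker N(\lambda)=\ker N_{\rm op}(\lambda)=\{\partial_\nu f_\lambda\vert_\cC:f_\lambda\in\ker(A_D-\lambda)\}$, and $f_\lambda\mapsto\partial_\nu f_\lambda\vert_\cC$ is injective on $\ker(A_D-\lambda)$ (again by unique continuation, since $f_\lambda\vert_\cC=0$), whence $\dim\ker N(\lambda)=\dim\ker(A_D-\lambda)<\infty$, which is (ii).

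The main obstacle is the selfadjointness step: making precise, in the calculus of linear relations, that the product $\gamma_N(\lambda_0)^*R\gamma_N(\lambda_0)$ of the bounded operator $\gamma_N(\lambda_0)$ with the selfadjoint relation $R$ is again selfadjoint, and correctly reading off its multivalued and operator parts through the sum $N(\lambda)=N(\lambda_0)+(\lambda-\lambda_0)\gamma_N(\lambda_0)^*R\gamma_N(\lambda_0)$; this is exactly where Proposition~\ref{ttprop} is needed and where the identity \eqref{useful} must be interpreted strictly in the sense of linear relations.
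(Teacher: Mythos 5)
Your proposal follows essentially the same route as the paper's proof: symmetry via Green's identity, a reference point $\lambda_0<\essinf V$ at which $N(\lambda_0)$ is a positive compact selfadjoint operator, the identity \eqref{useful} to express $N(\lambda)$ as a positive compact perturbation of $\gamma_N(\lambda_0)^*(A_N-\lambda)^{-1}\gamma_N(\lambda_0)$ (up to constants), Proposition~\ref{ttprop} for selfadjointness, and the spectral data of $(A_N-\lambda)^{-1}$ for the eigenvalue counts. The one place where your argument has a genuine flaw is precisely the step you single out as the main obstacle, namely the verification of the hypotheses of Proposition~\ref{ttprop}. The condition needed there for selfadjointness (as opposed to essential selfadjointness) is that $B^*\upharpoonright\ker A$ be boundedly invertible, i.e.\ that $\gamma_N(\lambda_0)^*$ restricted to $\ker(A_N-\lambda)$ be injective. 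What you cite instead is injectivity of $\gamma_N(\lambda_0)$ itself; this is not a hypothesis of the proposition and does not imply the needed condition --- injectivity of $B$ only gives density of $\ran B^*$, and indeed $\gamma_N(\lambda_0)^*$ is \emph{not} injective on all of $L^2(\Omega)$, since $\ker\gamma_N(\lambda_0)^*=(\ran\gamma_N(\lambda_0))^\perp$ is nontrivial. The correct verification, which is what the paper does, uses Lemma~\ref{gammaadj}: for $h_\lambda\in\ker(A_N-\lambda)$ one has $(A_N-\overline{\lambda_0})h_\lambda=(\lambda-\lambda_0)h_\lambda$ and hence $\gamma_N(\lambda_0)^*h_\lambda=(\lambda-\lambda_0)^{-1}h_\lambda\vert_\cC$; if this vanishes then $h_\lambda\vert_\cC=\partial_\nu h_\lambda\vert_\cC=0$, so $h_\lambda=0$ by unique continuation, and on the finite-dimensional space $\ker(A_N-\lambda)$ injectivity yields bounded invertibility. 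Two smaller points: Proposition~\ref{ttprop} does not by itself give compactness of the operator part, so you still need the identification $T_{\rm op}=\gamma_N(\lambda_0)^*\bigl((A_N-\lambda)^{-1}\bigr)_{\rm op}\gamma_N(\lambda_0)$ together with compactness of $\bigl((A_N-\lambda)^{-1}\bigr)_{\rm op}$ on $\ran(A_N-\lambda)$; and the paper splits off the bounded summand $(\lambda-\lambda_0)\gamma_N(\lambda_0)^*\gamma_N(\lambda_0)$ into a positive compact operator $K$ so that the proposition is applied directly with $A=A_N-\lambda$, which avoids having to regard your $R$ as the inverse of the selfadjoint relation $R^{-1}$ with closed range. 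With these repairs your argument is complete and coincides with the paper's.
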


\begin{proof}
The assertions on the domain and multivalued part of $N(\lambda)$ were shown in Theorem~\ref{thmdn}.
The remaining statements 
will be shown in separate steps. 
\vskip 0.2cm
\noindent
{\it Step 1.}
Note first that for all $\mu\in\dR\cap\rho(A_N)$ the Neumann-to-Dirichlet map is an operator with $\dom N(\mu)=L^2(\cC)$ and that
\begin{equation*}
(N(\mu)\varphi,\psi)_{L^2(\Omega)}-(\varphi,N(\mu)\psi)_{L^2(\Omega)}
= (f_\mu\vert_\cC,\partial_\nu g_\mu\vert_\cC)_{L^2(\cC)}
   - (\partial_\nu f_\mu\vert_\cC,g_\mu\vert_\cC)_{L^2(\cC)}
=0
\end{equation*}
by Lemma~\ref{greenlem}, where $f_\mu,g_\mu$ are the unique $H^1$-solutions of $\cL u=\mu u$ such that 
$\partial_\nu f_\mu\vert_\cC=\varphi$ and $\partial_\nu g_\mu\vert_\cC=\psi$.
Therefore $N(\mu)$ is a bounded selfadjoint 
operator in $L^2(\cC)$ for all $\mu\in\dR\cap\rho(A_N)$.
In particular, $N(\mu)$ is closed as an operator in $L^2(\cC)$
and as $\ran N(\mu)\subset H^1(\cC)$ it follows that $N(\mu)$ is also closed as an operator from 
$L^2(\cC)$ into $H^1(\cC)$ 
Hence it is bounded from $L^2(\cC)$ into $H^1(\cC)$.
Since $H^1(\cC)$ is compactly embedded in $L^2(\cC)$ we conclude 
that $N(\mu)$ is a compact selfadjoint 
operator in $L^2(\cC)$ for all $\mu\in\dR\cap\rho(A_N)$. 
Moreover, if $\mu<\essinf\,V$ then $\mu\in\rho(A_N)$ and \eqref{ederi} yields
\begin{equation*}
 \begin{split}
  (N(\mu)\varphi,\varphi)_{L^2(\cC)}&=(f_\mu\vert_\cC,\partial_\nu f_\mu\vert_\cC)_{L^2(\cC)}=
  (f_\mu,\Delta f_\mu)_{L^2(\Omega)}+(\nabla f_\mu,\nabla f_\mu)_{L^2(\Omega)^n}\\
  & \geq (f_\mu,(V-\mu)f_\mu)_{L^2(\Omega)}\geq 0,
 \end{split}
\end{equation*}
that is, $N(\mu)$ is a positive compact operator in $L^2(\cC)$. 
\vskip 0.2cm
\noindent
{\it Step 2.}
In order to show the remaining statements for $N(\lambda)$ and its operator part $N_{\rm op}(\lambda)$ 
we make use of \eqref{useful}.
Fix $\mu<\essinf\, V\leq \min\,\sigma(A_N)$.
Then $\mu\in\dR\cap\rho(A_N)$ and \eqref{useful} implies that
\begin{equation}\label{npertu}
N(\lambda)=K + (\lambda-\mu)^2 \gamma_N(\mu)^*(A_N-\lambda)^{-1}\gamma_N(\mu),
\end{equation}
where we have set
\begin{equation}\label{k}
 K:=N(\mu)+(\lambda-\mu)\gamma_N(\mu)^*\gamma_N(\mu).
\end{equation}
We have shown in Step 1 that $N(\mu)$ is a positive compact operator in $L^2(\cC)$ 
and the same is true for the second summand in \eqref{k}.
In fact, according to Corollary~\ref{gammaadjcor2} both operators
$\gamma_N(\mu)$ and $\gamma_N(\mu)^*$ are compact and hence 
$(\lambda-\mu)\gamma_N(\mu)^*\gamma_N(\mu)$ is a compact positive operator in $L^2(\cC)$.
Thus $K$ in \eqref{k} is a compact positive operator in $L^2(\cC)$.

\vskip 0.2cm
\noindent
{\it Step 3.}
Let $\lambda\in\sigma_p(A_N)$.
In this step we show that $N(\lambda)$ is a selfadjoint relation in $L^2(\cC)$.
By \eqref{npertu} 
it is sufficient to check that the relation
\begin{equation}\label{trel}
T:=\gamma_N(\mu)^*\bigl(A_N-\lambda\bigr)^{-1}\gamma_N(\mu)
\end{equation}
is selfadjoint in $L^2(\cC)$.
We aim to apply Proposition~\ref{ttprop}.
The assumptions in Proposition~\ref{ttprop} are satisfied 
since $A_N-\lambda$ is selfadjoint and $\ran(A_N-\lambda)$ is closed 
because $\lambda$ is an eigenvalue of finite multiplicity, 
$\gamma_N(\mu)$ is a bounded operator from $L^2(\cC)$ into $L^2(\Omega)$ and for all 
$h_\lambda\in \ker(A_N-\lambda)$ we have
$$
\gamma(\mu)^*h_\lambda=(\lambda-\mu)^{-1}h_\lambda\vert_\cC,
$$
so that $\gamma(\mu)^*h_\lambda=0$ implies $h_\lambda\vert_\cC=\partial_\nu h_\lambda\vert_\cC=0$ 
and hence $h_\lambda=0$ by unique continuation. 
Therefore $\gamma(\mu)^*\upharpoonright\ker (A_N-\lambda)$ is boundedly invertible and 
Proposition~\ref{ttprop} yields that the relation $T$
is selfadjoint in $L^2(\cC)$. It follows that $N(\lambda)=N(\lambda)^*$.

\vskip 0.2cm
\noindent
{\it Step 4.} Denote by $\{\lambda_k\}_{k \in \dN}$ the eigenvalues of $A_N$ with 
multiplicities taken into account and ordered in an increasing way. 
For all $\lambda\in(\mu,\infty)$ the eigenvalues of the selfadjoint relation $(A_N-\lambda)^{-1}$ are 
given by $\{(\lambda_k-\lambda)^{-1}: k \in \dN \mbox{ and } \lambda_k\not=\lambda\}$ 
and $\mul(A_N-\lambda)^{-1}=\ker(A_N-\lambda)$.
In particular, 
there are at most finitely many negative eigenvalues $(\lambda_i-\lambda)^{-1}$
with $\lambda_i <\lambda$ of $(A_N-\lambda)^{-1}$ and the positive eigenvalues 
$(\lambda_j-\lambda)^{-1}$ with $\lambda_j >\lambda$ of $(A_N-\lambda)^{-1}$
accumulate to $0$.
Hence the selfadjoint operator part $((A_N-\lambda)^{-1})_{\rm op}$ of $(A_N-\lambda)^{-1}$ 
acting in the Hilbert space $\ran(A_N-\lambda)$ is compact. 
It is not difficult to see that the operator part $T_{\rm op}$ of the selfadjoint relation $T$ in \eqref{trel} is given by
\[
T_{\rm op}=\gamma_N(\mu)^*\bigl((A_N-\lambda)^{-1}\bigr)_{\rm op}\gamma_N(\mu).
\]
It then follows that $T_{\rm op}$ is compact, that $T$ has finitely many negative eigenvalues and 
$\kappa_-(T)\leq \kappa_-(A_N-\lambda)<\infty$.
As $K$ in \eqref{npertu} is a positive compact operator these facts 
remain true for $N_{\rm op}(\lambda)$ and $N(\lambda)$.
The assertions (ii) and (iii) follow easily from \eqref{mulker2}, \eqref{mulker3}, and a unique continuation argument.
Moreover, as $N_{\rm op}(\lambda)$ is compact and does not have finite rank,
we conclude that $\kappa_+(N(\lambda))=\infty$.
\end{proof}

\begin{remark}
We note that the domain of the relation $T$ in \eqref{trel} consists of all those 
$\varphi\in L^2(\cC)$ such that
$\gamma_N(\mu)\varphi\in\ran(A_N-\lambda)$.
Next, let $h_\lambda\in\ker(A_N-\lambda)$ and $\varphi\in L^2(\cC)$.
Then 
$$(\mu-\lambda)(\gamma_N(\mu)\varphi,h_\lambda)_{L^2(\Omega)}
=(\cL \gamma_N(\mu)\varphi,h_\lambda)_{L^2(\Omega)}-(\gamma_N(\mu)\varphi,A_N h_\lambda)_{L^2(\Omega)}
=-(\varphi,h_\lambda\vert_\cC)_{L^2(\cC)}$$
by Green's second identity and we used that $\partial_\nu h_\lambda\vert_\cC=0$.
Hence for all $\varphi\in L^2(\cC)$ we conclude that $\gamma_N(\mu)\varphi\in\ran(A_N-\lambda)$ if and only if
$\varphi\perp h_\lambda\vert_\cC$ for all $h_\lambda\in\ker(A_N-\lambda)$.
This is in accordance with the form of $\dom N(\lambda)$ 
in Theorem~\ref{thmdn}, i.e. 
$$
\dom T=\dom N(\lambda)
=\bigl\{\psi\in L^2(\cC):(\psi,f_\lambda\vert_\cC)_{L^2(\cC)}=0 \mbox{ for all }
   f_\lambda\in \ker (A_N-\lambda)\bigr\}.
$$
\end{remark}

In the next example we show that the estimate on the number of negative eigenvalues of $N(\lambda)$ in Theorem~\ref{dnmapsa}~(i) is not optimal.
Roughly speaking the reason is that eigenvalues of $A_D$ which are smaller than $\lambda$ lead to a cancellation of negative eigenvalues of $N(\lambda)$.

\begin{example}
Suppose that $\Omega=[0,1] \times [0,1]$ and that $\cL=-\Delta$ (that is $V=0$).
It is well-known and not difficult to see that the eigenvalues of
the Dirichlet Laplacian $A_D$ and the Neumann Laplacian $A_N$ are given by 
\begin{equation*}
\begin{split}
 \sigma_p(A_D)&=\bigl\{(m^2+n^2)\pi^2:m,n \in \{ 1,2,\dots \} \bigr\}\\
              &=\bigl\{2\pi^2,5\pi^2,5\pi^2,8\pi^2,10\pi^2,10\pi^2,13\pi^2,13\pi^2\dots \bigr\}
 \end{split}
 \end{equation*}
and
 \begin{equation*}
 \begin{split}
 \sigma_p(A_N)&=\bigl\{(m^2+n^2)\pi^2:m,n \in \{ 0,1,\dots \} \bigr\}\\
 &=\bigl\{0,\pi^2,\pi^2,2\pi^2,4\pi^2,4\pi^2,5\pi^2,5\pi^2,8\pi^2,\dots\bigr\}
 \end{split} 
\end{equation*}
respectively.
Hence for all $\lambda\in (4\pi^2,5\pi^2)$ the estimate in  Theorem~\ref{dnmapsa}~(i) becomes
\begin{equation}\label{estinot}
 \kappa_-(N(\lambda))\leq \kappa_-(A_N-\lambda)=6.
\end{equation}
However, it follows from Friedlander's inequality (see \cite{ArM2}, Proposition 4, and \cite{Friedlander}) that the Dirichlet-to-Neumann map $D(\lambda)$ has exactly 
\begin{equation*}
 \sharp\bigl\{\lambda_k\in\sigma_p(A_N):\lambda_k\leq \lambda \bigr\}-\sharp\bigl\{\mu_j\in\sigma_p(A_D):\mu_j\leq \lambda \bigr\}=6-1=5
\end{equation*}
eigenvalues in $(-\infty,0]$.
As $0$ is an eigenvalue of $D(\lambda)$ if and only if $\lambda$ is an eigenvalue of $A_N$ it follows that
in the present situation the Dirichlet-to-Neumann map $D(\lambda)$ has $5$ eigenvalues in $(-\infty,0)$.
Thus $N(\lambda)=D(\lambda)^{-1}$
also has $5$ eigenvalues in $(-\infty,0)$, i.e., the estimate \eqref{estinot} is not sharp.
\end{example}

The next theorem is a corollary of Theorem~\ref{dnmapsa}.
The Dirichlet-to-Neumann map $D(\lambda)$ as the inverse of the Neumann-to-Dirichlet map is selfadjoint in $L^2(\cC)$.
The nonzero eigenvalues of $D(\lambda)$ are the reciprokes of the nonzero eigenvalues of $N(\lambda)$, and $\ker D(\lambda)=\mul N(\lambda)$
and $\mul D(\lambda)=\ker N(\lambda)$ by \eqref{mulker2} and \eqref{mulker3}. In particular, the
operator part $D_{\rm op}(\lambda)$ is 
an unbounded operator with finitely many negative eigenvalues.

\begin{theorem}\label{dnmapsa2}
For all $\lambda\in\dR$ the Dirichlet-to-Neumann map $D(\lambda)$ is a selfadjoint relation in $L^2(\cC)$ defined on the subspace
\begin{equation*}
\dom D(\lambda)
= \bigl\{\varphi\in H^1(\cC):(\varphi,\partial_\nu f_\lambda\vert_\cC)=0 \mbox{ for all }
f_\lambda\in \ker (A_D-\lambda)\bigr\}\subset L^2(\cC)
\end{equation*}
with multivalued part $\mul D(\lambda)= \{\partial_\nu f_\lambda\vert_\cC:f_\lambda\in \ker (A_D-\lambda)\}$.
The operator part $D_{\rm op}(\lambda)$ of $D(\lambda)$
is an unbounded selfadjoint operator in the Hilbert space $\dom D(\lambda)$.
Moreover,
\begin{itemize}
 \item [{\rm (i)}] $\kappa_-(D(\lambda))=\kappa_-(N(\lambda))\leq \kappa_-(A_N-\lambda)<\infty$ and $\kappa_+(D(\lambda))=\infty$,
 \item [{\rm (ii)}] $\dim\ker(D(\lambda))=\dim\ker(A_N-\lambda)<\infty$,
 \end{itemize}
 and
 \begin{itemize}
 \item [{\rm (iii)}]$\dim\mul(D(\lambda))=\dim\ker(A_D-\lambda)<\infty$.
\end{itemize}
\end{theorem}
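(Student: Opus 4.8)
The plan is to deduce everything from the single identity $D(\lambda)=N(\lambda)^{-1}$ together with Theorem~\ref{dnmapsa} and the elementary calculus of linear relations; in the authors' words, this theorem is a corollary of the preceding one. First I would record that $D(\lambda)=N(\lambda)^{-1}$ as linear relations in $L^2(\cC)\times L^2(\cC)$. Indeed $\cD(\lambda)^{-1}=\cN(\lambda)$, and taking the inverse of a relation merely interchanges the two components, so it distributes over the intersection with $L^2(\cC)\times L^2(\cC)$; hence
$D(\lambda)^{-1}=\bigl(\cD(\lambda)\cap(L^2(\cC)\times L^2(\cC))\bigr)^{-1}=\cN(\lambda)\cap(L^2(\cC)\times L^2(\cC))=N(\lambda)$.
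Since $\lambda\in\dR$, Theorem~\ref{dnmapsa} shows that $N(\lambda)$ is a selfadjoint relation in $L^2(\cC)$, and the inverse of a selfadjoint relation is again selfadjoint (see Appendix~\ref{Sappendix}); thus $D(\lambda)=D(\lambda)^*$. The asserted forms of $\dom D(\lambda)$ and $\mul D(\lambda)$ have already been established in Theorem~\ref{thmdn}, so nothing further is required for those.

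Next I would invoke the standard dictionary between a selfadjoint relation $S$ and its inverse $S^{-1}$: one has $\ker(S^{-1})=\mul S$ and $\mul(S^{-1})=\ker S$, the operator parts act on the respective closed subspaces $\dom S$ and $\dom S^{-1}$, and the nonzero eigenvalues of $(S^{-1})_{\rm op}$ are precisely the reciprocals of the nonzero eigenvalues of $S_{\rm op}$, with the same multiplicities (in particular the negative eigenvalues correspond bijectively, and likewise the positive ones). Applying this with $S=N(\lambda)$ and using Theorem~\ref{dnmapsa}: from item (iii) there we get $\dim\ker D(\lambda)=\dim\mul N(\lambda)=\dim\ker(A_N-\lambda)<\infty$, which is (ii); from item (ii) there we get $\dim\mul D(\lambda)=\dim\ker N(\lambda)=\dim\ker(A_D-\lambda)<\infty$, which is (iii). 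These are of course also consistent with \eqref{mulker2} and \eqref{mulker3}.

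For (i), recall from Theorem~\ref{dnmapsa} that $N_{\rm op}(\lambda)$ is a compact selfadjoint operator on the infinite-dimensional Hilbert space $\dom N(\lambda)$ which does not have finite rank, and that $\kappa_-(N(\lambda))\le\kappa_-(A_N-\lambda)<\infty$ while $\kappa_+(N(\lambda))=\infty$. By the eigenvalue correspondence above, $\kappa_-(D(\lambda))=\kappa_-(N(\lambda))\le\kappa_-(A_N-\lambda)<\infty$ and $\kappa_+(D(\lambda))=\kappa_+(N(\lambda))=\infty$, which is (i). Finally, since $N_{\rm op}(\lambda)$ is compact and of infinite rank, its nonzero eigenvalues form a sequence converging to $0$; their reciprocals then diverge to $\infty$, so the selfadjoint operator $D_{\rm op}(\lambda)$ is unbounded. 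I do not expect a genuine obstacle: the only points needing a little care are the precise statement of the operator-part/kernel/multivalued-part correspondence between a selfadjoint relation and its inverse — which is part of the linear-relation material recalled in the appendix — and the observation that it is exactly the non-finite-rank compactness of $N_{\rm op}(\lambda)$ established in Theorem~\ref{dnmapsa} that forces $D_{\rm op}(\lambda)$ to be genuinely unbounded rather than merely possibly so.
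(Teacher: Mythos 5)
Your proposal is correct and follows exactly the route the paper takes: the paper derives Theorem~\ref{dnmapsa2} as a corollary of Theorem~\ref{dnmapsa} via $D(\lambda)=N(\lambda)^{-1}$, using that the inverse of a selfadjoint relation is selfadjoint, the reciprocal correspondence of nonzero eigenvalues, and the identities $\ker D(\lambda)=\mul N(\lambda)$, $\mul D(\lambda)=\ker N(\lambda)$. Your added details (inverse commuting with the intersection defining the $L^2$-restriction, and the infinite-rank compactness of $N_{\rm op}(\lambda)$ forcing unboundedness of $D_{\rm op}(\lambda)$) are exactly the points the paper leaves implicit.
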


\appendix
\section{Linear relations} \label{Sappendix}

In this section we briefly recall some definitions and properties of linear relations in Hilbert spaces. 
A (closed) linear
relation $S$ from a Hilbert space $\cG$ into a Hilbert space $\cH$ is a (closed) 
subspace of $\cG\times\cH$.
The elements in a linear
relation $S$ consist of two components and will usually be written in the form $\{g,h\}\in S$.
The domain, range, kernel
and multivalued part of a linear relation $S$ from $\cG$ into $\cH$ are defined as
\begin{equation*}
 \begin{split}
  \dom S&=\bigl\{g\in\cG:\{g,h\}\in S \mbox{ for some } h\in\cH\bigr\},\\
  \ran S&=\bigl\{h\in\cH:\{g,h\}\in S \mbox{ for some } g\in\cG\bigr\},\\
  \ker S&=\bigl\{g\in\cG:\{g,0\}\in S\bigr\},
  \end{split}
  \end{equation*}
and
\begin{equation*}
\mul S =\bigl\{h\in\cH:\{0,h\}\in S\bigr\},
\end{equation*}
respectively.
Observe that a linear relation $S$ is the graph of an operator 
if and only if $\mul S=\{0\}$.
The inverse $S^{-1}$ of  a linear relation $S$ from $\cG$ to $\cH$ is defined by
\begin{equation*}
 S^{-1}=\bigl\{\{h,g\} \in \cH \times \cG:\{g,h\}\in S\bigr\},
\end{equation*}
and $S^{-1}$ is a linear relation from $\cH$ into $\cG$. Note that $S^{-1}$ is closed if and only if $S$ is closed.
Moreover,
it is easy to see that $\dom S=\ran S^{-1}$ and $\mul S=\ker S^{-1}$.
The sum $S+T$ of two linear relations $S$ and $T$ from
$\cG$ into $\cH$ is defined by
\begin{equation*}
S+T=\bigl\{\{g,h+h^\prime\}:\{g,h\}\in S \mbox{ and } \{g,h^\prime\}\in T\bigr\}. 
\end{equation*}
It is clear that $S+T$ is also a linear relation from $\cG$ to $\cH$.
Assume that $\cK$ is a further Hilbert space
and let $R$ be a linear relation from $\cK$ to $\cG$.
Then the product 
\begin{equation*}
SR=\bigl\{\{k,h\} \in \cK \times \cH :
   \mbox{there exists a } g \in \cG \mbox{ such that } \{k,g\}\in R \mbox{ and } \{g,h\}\in S\bigr\} 
\end{equation*}
is a linear relation from $\cK$ to $\cH$.
The adjoint $S^*$ of a linear relation $S$ from $\cG$ into $\cH$ is defined by
\begin{equation*}
S^*=\bigl\{\{h^\prime,g^\prime\} \in \cH \times \cG: (h,h^\prime)_{\cH}=(g,g^\prime)_{\cG}
   \mbox{ for all } \{g,h\}\in S\bigr\}.
\end{equation*}
This definition extends the usual definition of the adjoint of a bounded or unbounded operator.
Observe that $S^*$ is a closed linear relation from $\cH$ into $\cG$ and that 
$(S^*)^{-1}=(S^{-1})^*$ and $S^{**}=\overline S$, where $\overline S$ is the closure of $S$
in $\cG\times\cH$.
Moreover, 
it is not difficult to check that
\begin{equation}\label{dommuls}
(\ran S)^\bot=\ker S^*\qquad\text{and}\qquad (\dom S)^\bot=\mul S^*.
\end{equation}
From the second equality in \eqref{dommuls} it also follows that the adjoint of 
$S$ is an operator if and only if $\dom S$ is dense in $\cG$. In the case that $\cG\subset\cH\subset\cG^\prime$ form a rigging of Hilbert spaces
and $S$ is a linear relation from $\cG$ into $\cH$
the adjoint with respect to the extension of the inner product in $\cH$ onto $\cG\times\cG^\prime$ is denoted by $S^\prime$, which is a linear
relation from $\cH$ into $\cG^\prime$.

Assume now that $S$ is a closed linear relation in the Hilbert space $\cH$.
The point spectrum $\sigma_p(S)$ is defined as the set of all $\lambda\in\dC$
such that $\ker(S-\lambda)\not=\{0\}$.
An element $\lambda\in\dC$ belongs to the resolvent set $\rho(S)$ of $S$ if $(S-\lambda)^{-1}\in\cL(\cH)$.
The spectrum of $S$ is $\sigma(S) = \dC \setminus \rho(S)$.

A linear relation $A$ in $\cH$ is said to be symmetric, or essentially selfadjoint, or selfadjoint if
$A\subset A^*$, or $\overline A=A^*$, or $A=A^*$, respectively.
For a selfadjoint relation $A$ one has $(\dom A)^\bot=\mul A$ and it follows that
$A$ can be regarded as an orthogonal sum of a selfadjoint operator in the Hilbert space $\cH_{\rm op}=\overline{\dom A}$ and a purely
multivalued relation $A_\infty=\{\{0,h\}:h\in\mul A\}$ in the Hilbert space $\cH_\infty=\mul A$.
In particular, $\dC\setminus\dR\subset\rho(A)$
and $\sigma(A)\subset\dR$.
We will also make use of the fact that the sum $A+C$ of a selfadjoint relation $A$ in 
$\cH$ and a symmetric operator $C\in\cL(\cH)$
is a selfadjoint relation in $\cH$.

The following proposition provides a sufficient criterion for the selfadjointness of a 
certain product of a selfadjoint relation with two bounded operators.
This statement plays an important role in the proof of Theorem~\ref{dnmapsa}.

\begin{proposition}\label{ttprop}
Let $\cH$ and $\cG$ be Hilbert spaces,
let $A$ be a selfadjoint relation in $\cH$, let $B\in\cL(\cG,\cH)$, 
and assume that $\ran A$ is closed.
Then the relation
\begin{equation*}
T=B^*A^{-1}B 
\end{equation*}
is essentially selfadjoint in $\cG$.
If, in addition, $B^*\upharpoonright\ker A$ is boundedly invertible then
$T$ is selfadjoint in $\cG$.
\end{proposition}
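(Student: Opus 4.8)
The plan is to make both $A^{-1}$ and $T=B^*A^{-1}B$ completely explicit and then to identify the closure $\overline T$ and compute $(\overline T)^*$ by hand. Write $P\in\cL(\cH)$ for the orthogonal projection onto $\ran A$. Since $A$ is selfadjoint, $A^{-1}$ is selfadjoint as well, with $\dom A^{-1}=\ran A$ closed by hypothesis and $\mul A^{-1}=\ker A=(\ran A)^\perp$. Hence in the orthogonal decomposition of the selfadjoint relation $A^{-1}$ the operator part $S:=(A^{-1})_{\rm op}$ is a selfadjoint operator in the Hilbert space $\ran A$ whose domain is all of $\dom A^{-1}=\ran A$; being closed and everywhere defined it is bounded, so $S\in\cL(\ran A)$, and
\[
A^{-1}=\bigl\{\{f,Sf+h\}:f\in\ran A,\ h\in\ker A\bigr\}.
\]
It is convenient to put $\widehat S:=SP$, which is a bounded selfadjoint operator on all of $\cH$ (one-line verification), and $C:=B^*\widehat S B$, which is a bounded selfadjoint operator on $\cG$.

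Next I would unwind the definition of the product $T=B^*A^{-1}B$. Since $B$ and $B^*$ are bounded and everywhere defined, $\{u,w\}\in T$ holds if and only if $Bu\in\ran A$ and $w=B^*S(Bu)+B^*h$ for some $h\in\ker A$. Consequently $\dom T=\{u\in\cG:Bu\in\ran A\}=(B^*\ker A)^\perp$, a closed subspace of $\cG$ which I denote by $\cM$, and $\mul T=B^*\ker A$. Because $S(Bu)=\widehat S(Bu)$ whenever $u\in\cM$, this yields the clean description
\[
T=\bigl\{\{u,Cu+B^*h\}:u\in\cM,\ h\in\ker A\bigr\}.
\]

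From this the closure is read off. Since $\cM^\perp=\overline{B^*\ker A}$, the set
\[
T':=\bigl\{\{u,w\}\in\cG\times\cG:u\in\cM,\ w-Cu\in\cM^\perp\bigr\}
\]
is closed ($\cM$ is closed and $C$ is bounded), it contains $T$, and $T$ is dense in it, as every $v\in\cM^\perp$ is a limit of elements $B^*h$ with $h\in\ker A$; hence $\overline T=T'$. A direct computation of $(T')^*$ from the definition of the adjoint of a linear relation, using that $C$ is selfadjoint and $\cM$ is closed, gives $(T')^*=\{\{a,b\}:a\in\cM,\ b-Ca\in\cM^\perp\}=T'$ (equivalently, $T'$ is the orthogonal sum of the bounded selfadjoint operator $u\mapsto P_\cM Cu$ on the Hilbert space $\cM$ and a purely multivalued relation filling $\cM^\perp$, hence selfadjoint). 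Therefore $T^*=(\overline T)^*=(T')^*=T'=\overline T$, so $T$ is essentially selfadjoint. If in addition $B^*\upharpoonright\ker A$ is boundedly invertible, then it is bounded below, so $B^*\ker A$ is already closed and equals $\cM^\perp$; in that case $T=T'$, whence $T=\overline T=T^*$ is selfadjoint.

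The step requiring most care is the identification of $\overline T$: one must check that $T$ fails to be closed only in the multivalued direction $\cM^\perp=\overline{B^*\ker A}$ — the set $\{B^*h:h\in\ker A\}$ being dense there but possibly not closed — and that the resulting relation $T'$ is genuinely selfadjoint, which is precisely the adjoint computation above. The remaining ingredients — the structure of the selfadjoint relation $A^{-1}$, the boundedness of $S$ via the closed graph theorem, and the elementary rules for domains, kernels, adjoints and products of linear relations recalled in Appendix~\ref{Sappendix} — are routine.
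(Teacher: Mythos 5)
Your proof is correct, and it takes a genuinely different route from the one in the paper. The paper argues abstractly: it records $\dom T=\{\varphi: B\varphi\in\ran A\}$ and $\mul T=B^*\ker A$, notes that $T$ is symmetric because $A^{-1}$ is, proves the two inclusions $(\mul T)^\perp\subset\dom T$ and $(\dom T)^\perp\subset\overline{\mul T}$ (this is where closedness of $\ran A$ enters), deduces $\dom T=\dom T^*$ and $\overline{\mul T}=\mul T^*$ from the duality relations $(\dom S)^\perp=\mul S^*$, and then shows $T^*\subset\overline T$ by splitting an element of $T^*$ into a piece of $\overline T$ plus a purely multivalued piece. You instead make everything explicit: the closedness of $\ran A$ forces the operator part $S$ of $A^{-1}$ to be an everywhere-defined, hence bounded, selfadjoint operator on $\ran A$, which lets you write $T=\{\{u,Cu+B^*h\}:u\in\cM,\ h\in\ker A\}$ with $C=B^*SPB$ bounded selfadjoint and $\cM=(B^*\ker A)^\perp=\dom T$ closed; the closure and the adjoint are then computed by hand. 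Your argument buys strictly more information than the proposition asserts: it exhibits $\overline T$ as the orthogonal sum of the \emph{bounded} selfadjoint operator $u\mapsto P_\cM Cu$ on $\cM$ and the purely multivalued relation on $\cM^\perp$, so in particular the operator part of $T$ is bounded and the only failure of closedness of $T$ is the possible non-closedness of $B^*\ker A$. The paper's argument is shorter on the bookkeeping and stays entirely within the calculus of relations recalled in Appendix~\ref{Sappendix}, which is why it reads more smoothly in that context; both proofs use the closedness of $\ran A$ at exactly one essential point (yours: to identify $\dom T$ with $(B^*\ker A)^\perp$ and to make $S$ bounded; the paper's: to prove $(\mul T)^\perp\subset\dom T$).
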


\begin{proof}
Note first that the relation $T$ has the form
\begin{equation*}
 T=\bigl\{\{\varphi, B^* f\}: \varphi \in \cG, \; f \in \cH \mbox{ and } \{B\varphi,f\}\in A^{-1}\bigr\}
\end{equation*}
and that
\begin{equation*}
 \begin{split}
  \dom T&=\bigl\{\varphi\in\cG: B\varphi\in \dom A^{-1}=\ran A \bigr\},\\
  \mul T&=\bigl\{B^*f:f\in \mul A^{-1}=\ker A\bigr\}.
 \end{split}
\end{equation*}
Observe also that $\mul T$ is closed if $B^*\upharpoonright\ker A$ is boundedly invertible.
Furthermore, as $A$ is assumed to be selfadjoint the same is true for $A^{-1}$ and, in particular, $A^{-1}$ is symmetric.
This implies that $T$ and $\overline T$ are symmetric and hence
\begin{equation}\label{edommul}
\dom T\subset \dom \overline T\subset \dom T^*\qquad\text{and}\qquad \mul T\subset \mul \overline T\subset \mul T^*.
\end{equation}

We claim that 
\begin{equation}\label{einclttt}
 \bigl(\mul T\bigr)^\bot\subset\dom T\qquad\text{and}\qquad\bigl(\dom T\bigr)^\bot\subset \overline{\mul T}.
\end{equation}
In fact, for the first inclusion assume that $\psi\in\cG$ is orthogonal to $\mul T$.
Then we have 
\begin{equation*}
0=(\psi,B^*f)_{\cG}=(B\psi,f)_{\cH}
\end{equation*}
for all $f\in\ker A$ and hence $B\psi$ is orthogonal to $\ker A=(\ran A)^\bot$.
As $\ran A$ is assumed to be closed
we conclude that $B\psi\in\ran A$ and hence $\psi\in\dom T$.
This shows the first inclusion \eqref{einclttt}.
The second inclusion in \eqref{einclttt} follows by taking orthogonal complements.

We conclude from \eqref{dommuls}, \eqref{edommul}, and the second inclusion in \eqref{einclttt} that
\begin{equation*}
\mul \overline T \subset\mul T^* = \bigl(\dom T\bigr)^\bot\subset\overline{\mul T}\subset\mul\overline T,
\end{equation*}
and hence
\begin{equation}\label{gut1}
 \overline{\mul T}=\mul \overline T=\mul T^*.
\end{equation}
Similarly, from \eqref{dommuls}, \eqref{edommul}, the first inclusion in \eqref{einclttt}, and $\mul T\subset \mul \overline T$ we find
\begin{equation*}
\dom T \subset\dom\overline T\subset \dom T^*\subset\bigl(\dom T^*\bigr)^{\bot\bot}=\bigl(\mul \overline T\bigr)^\bot\subset(\mul T)^\bot\subset\dom T,
\end{equation*}
and hence 
\begin{equation}\label{gut2}
 \dom T=\dom\overline T= \dom T^*.
\end{equation}

The assertions now follow from \eqref{gut1} and \eqref{gut2}.
In fact, in order to show that $T$ is essentially selfadjoint it remains to
check that the inclusion $T^*\subset\overline T$ holds.
For this let $\{\varphi,\psi\}\in T^*$.
Then by \eqref{gut2} there exists a 
$\vartheta$ such that $\{\varphi,\vartheta\}\in \overline T$. 
As $\overline T$ is symmetric we have $\overline T\subset T^*$ and $\{\varphi,\vartheta\}\in T^*$. 
Then $\psi-\vartheta\in\mul T^*=\mul \overline T$ by \eqref{gut1} and we obtain $\{0,\psi-\vartheta\}\in \overline T$. 
Therefore 
$$\{ \varphi, \psi\}=\{ \varphi, \vartheta\}+\{0, \psi-\vartheta\}\in \overline T,$$
and hence $T$ is essentially selfadjoint.
If, in addition, $B^*\upharpoonright\ker A$ is boundedly invertible then $\mul T$ is 
closed and hence $\mul T=\mul T^*$ by \eqref{gut1}.
Now the argument above remains valid with $\overline T$ replaced by $T$ 
and it follows that $T$ is selfadjoint.
\end{proof}

\end{document}